\newcommand{\eg}{e.\@g.\@}
\newcommand{\ie}{i.\@e.\@}
\newcommand{\abs}[1]{\mathopen|#1\mathclose|}
\newcommand{\adhp}[2]{\overline{ \{#1\}}^{#2_{\mathrm {Zar}}}}
\newcommand{\an}{^{\mathrm{an}}}
\newcommand{\al}{^{\mathrm{al}}}
\newcommand{\dimc}{\dim_{\mathrm c}}
\newcommand{\grot}{_{\mathrm G}}
\newcommand{\hr}[1]{\mathscr H(#1)}
\newcommand{\hrt}[1]{\widetilde{\mathscr H(#1)}}
\newcommand{\pl}[2]{\mathsf P(#1/#2)}
\newcommand{\ql}[2]{\mathsf Q(#1/#2)}
\newcommand{\qln}[3]{\mathsf Q(#1/#2)_{\geq #3}}
\newcommand{\qlns}[3]{\mathsf Q(#1/#2)_{\geq #3}^
{\mathrm {sat}}}
\def\spec{\mathop{\mathrm{Spec}}}
\newcommand{\supp}[1]{\mathrm{Supp}(\mathscr #1)}
\DeclareMathOperator{\colim}{colim}
\newcommand{\A}{\mathbf A}
\newcommand{\Q}{\mathbf Q}
\newcommand{\R}{\mathbf R}
\newcommand{\Z}{\mathbf Z}
\renewcommand{\phi}{\varphi}
\renewcommand{\epsilon}{\varepsilon}
\renewcommand{\leq}{\leqslant}
\renewcommand{\geq}{\geqslant}
\title{Utilisation de l'aplatissement en géométrie de Berkovich}
\author{\firstname{Antoine} \lastname{Ducros}}
\address{Sorbonne Université, Université Paris-Diderot, CNRS, Institut de Mathématiques de Jussieu-Paris
Rive Gauche, IMJ-PRG, 
F-75005, Paris, France
}
\email{antoine.ducros\at imj-prg.fr}
\thanks{Ce travail a démarré et été réalisé en grande partie à l'institut Weizmann (Rehovot, Israël) où je
suis resté du 15 septembre au 13 décembre 2022. Merci à Vladimir Berkovich de m'y avoir invité, et merci à cet établissement de m'avoir accueilli et d'avoir financé mon séjour
par une {\sc Emmma and Michael Jacob} \textit{professorship}. J'ai également bénéficié durant sa rédaction du soutien du projet ANR-24-CE40-6184 (AdAnAr).}
\urladdr{http://www.imj-prg.fr/$\sim$antoine.ducros}
\begin{abstract}Dans cet article, nous mettons en œuvre les techniques d'aplatissement développées dans un précédent travail pour «enjoliver» 
un morphisme entre espaces analytiques compacts, pour décrire la structure de son image et obtenir ainsi un substitut non archimédien au théorème de Chevalley, et enfin pour montrer que la platitude dans le monde des espaces de Berkovich revient à la platitude naïve à condition de considérer les anneaux
locaux pour la G-topologie.
\end{abstract}
\subjclass{14G22, 14G99}
\keywords{Espaces de Berkovich, Images de morphismes analytiques,
platitude}
\begin{document}

\maketitle
\tableofcontents

\setcounter{section}{-1}

\section{Introduction}

Nous proposons dans le présent travail plusieurs applications
des techniques d'aplatissement non archimédiennes mises au point
par l'auteur dans \cite{ducros2021a}. Précisons d'emblée que si 
ces techniques sont directement inspirées de
celles de l'article fondateur \cite{raynaud-g1971}
de Raynaud et Gruson, elles donnent lieu à des énoncés 
nettement plus compliqués que leurs originaux schématiques. En effet, 
nous ne réussissons à aplatir un faisceau cohérent que par transformée
stricte après
changement de base le long
d'une succession d'éclatements \textit{et de morphismes quasi-étales}
(au paragraphe final 7.12 de \cite{ducros2021a}, nous expliquons
pourquoi il n'est probablement pas possible d'éviter ces derniers).

Si $X$ est un espace $k$-analytique compact, nous aurons donc très souvent à considérer
un couple $(Z,S)$ formé
d'une part d'un espace $k$-analytique compact $Z$ muni d'une flèche 
$Z\to X$ qui est une tour d'éclatements et de morphismes quasi-étales
à sources compactes,
et d'autre part d'un diviseur de Cartier $S$ de $Z$ contenant les images réciproques 
des centres de tous les éclatements intermédiaires de la tour (si bien que $Z\setminus S\to X$ est quasi-étale). 
Comme ces couples nous permettront d'«enjoliver» les morphismes, nous avons choisi de les appeler
\textit{enjoliveurs} de $X$ ; ils sont introduits et brièvement étudiés à la section \ref{s-enjoliveurs}, qui se conclut
par une proposition assurant qu'un enjoliveur d'un sous-espace analytique fermé de $X$ peut toujours être relevé
en un enjoliveur de $X$ (proposition \ref{prop-enjoliveur-ferme}).

Après avoir consacré la section \ref{s-aplatissement}
à la présentation d'une version un peu améliorée (théorème \ref{theo-aplatissement-bis})
du théorème principal d'aplatissement de \cite{ducros2021a}, nous exploitons
cette version à la section \ref{s-comment} pour enjoliver 
un morphisme.

\begin{theo}[Version simplifiée du
théorème \ref{theo-embellissement}]\label{theo-embellissement-intro}
Soit $Y\to X$
un morphisme entre espaces $k$-analytiques compacts
et réduits.  
On suppose que $Y$ est non vide et 
équidimensionnel.
On pose $n=\dim Y$  
et l'on suppose que la dimension relative 
de $Y$ au-dessus de $X$ a même valeur générique (ou encore minimale) $d$
sur chaque composante
irréductible de $Y$.

Il existe alors un enjoliveur $(Z,S)$ de $X$
et un domaine analytique compact $V$ de $Y\times_X Z$ tels que les propriétés suivantes soient satisfaites, 
en notant $V'$ la «transformée stricte» de $V$, c'est-à-dire l'adhérence réduite de 
$V\setminus (V\times_Z S)$
dans $V$ :
\begin{itemize}[label=$\diamond$] 
\item l'image de $S$ sur $X$ est contenue dans $f(Y)$ et de dimension $<n-d$ 
(la dimension d'une partie quelconque
d'un espace analytique est définie à la section \ref{rappels}) ; 
\item $V'\to Y$ est surjectif ; 
\item $V'\to Z$ se factorise par un morphisme surjectif et plat sur un sous-espace
analytique fermé et réduit de $Z$, transverse à $S$ et purement de dimension $n-d$. 
\end{itemize}
\end{theo}

Nous nous intéressons ensuite à la structure de $f(Y)$
où $f\colon Y\to X$ est un morphisme entre espaces $k$-analytiques compacts. 
Précisons qu'à notre connaissance 
la structure de $f(Y)$ n'était jusqu'ici bien comprise que lorsque $f$ est plat (c'est alors un domaine analytique compact de $X$, 
par le théorème 9.2.1 de \cite{ducros2018}, dû à Raynaud dans le cas strict, \textit{cf.} \cite[cor. 5.11]{frg2}) ou propre
(c'est alors un fermé de Zariski de $X$ ; on le déduit du théorème de cohérence des images directes établi par Kiehl
dans \cite{kiehl1967b}, \textit{cf.} \cite[1.3.23]{ducros2018} pour les détails). Mentionnons toutefois que des
stratifications
raisonnables d'images de morphismes analytiques ont été construites par Lipshitz 
dans un tout autre langage et avec des méthodes entièrement différentes (son cadre de travail est la théorie des modèles des corps 
valués algébriquement clos dans un langage enrichi par une classe de fonctions analytiques «raisonnables»), voir par exemple le théorème 5.2
de \cite{lipshitz1993} ; nous ignorons les liens précis éventuels entre ses théorèmes de structure et les nôtres.

Nous considérons tout d'abord le
cas particulier d'un morphisme quasi-étale
$f\colon Y\to X$ entre espaces $k$-analytiques. Pour des raisons techniques, nous 
faisons simplement une hypothèse 
de compacité \textit{relative} : nous demandons que $f$ soit compact, c'est-à-dire que
$f^{-1}(K)$ soit compact pour tout compact $K$ de $X$. 

\begin{theo}[Version simplifiée du 
théorème \ref{theo-casqe}]\label{theo-casqe-intro}
Soit
$f\colon Y\to X$ un morphisme compact 
entre espaces $k$-analytiques.
Soit $m$ un entier et soit $Z$
un fermé de Zariski de $Y$ purement de dimension $m$. 
Pour tout $d$, désignons par $X_d$ l'ensemble des points de $X$
dont la fibre géométrique sous $f$ est de cardinal au moins $d$.
Le sous-ensemble $X_d$ de
$X$ en est un domaine analytique fermé, et l'intersection
$f(Z)\cap (X_d\setminus X_{d+1})$ est pour tout $d$ 
un fermé de Zariski de $X_d\setminus X_{d+1}$.
\end{theo}

Ce théorème se démontre par descente quasi-étale.
Il fournit en particulier
une partition de $f(Z)$ en sous-ensembles
localement fermés, chacun d'eux étant un fermé de Zariski purement de dimension $m$
d'un domaine analytique de $X$. 
Remarquons que si le cardinal des
fibres géométriques de $f$ est majoré (ce sera toujours le cas dans les situations que nous rencontrerons), 
cette partition est finie car $X_d$ est alors vide pour $d$ assez grand.

Nous utilisons ensuite ce théorème pour
donner une description de l'image d'un morphisme général entre 
espaces $k$-analytiques compacts, que nous voyons comme un 
théorème de Chevalley non archimédien. 
Dans ce but, nous introduisons tout d'abord la définition suivante.

\begin{defi}[Version simplifiée de la définition 
\ref{def-cellule}]
Soit $X$ un espace $k$-analytique et soit $m$ un entier. On définit
la notion de \emph{$m$-cellule} de $X$ récursivement sur $m$
de la façon suivante :

\begin{itemize}[label=$\diamond$]
\item Une $0$-cellule de $X$ est un ensemble
fini et non vide de points rigides de $X$. 
\item Si $m>0$, une $m$-cellule de $X$
est une partie localement fermée $C$
non vide de $X$ telle que les propriétés suivantes soient satisfaites, en notant
$\partial C$ le fermé $\overline C\setminus C$ : 
\begin{enumerate}[a]
\item il existe 
un morphisme quasi-étale compact $g\colon X'\to X\setminus \partial C$, dont le cardinal
des fibres géométriques est majoré, et un fermé de Zariski $Z\subset X'$
non vide
et purement de dimension $m$ tel que $g(Z)=C$ 
(ainsi, $C$ admet une partition finie de la forme
fournie par le théorème \ref{theo-casqe-intro}) ; 

\item $\partial C$ s'écrit comme une union finie $\bigcup \overline{C_j}$ où
chaque $C_j$ est une $m_j$-cellule pour un certain entier $m_j<m$.
\end{enumerate}

\end{itemize}

\end{defi}

\begin{theo}[Version simplifiée
du théorème \ref{theo-chevalley-na}, avatar non archimédien
du théorème de Chevalley]\label{theo-chevalley-na-intro}

Soit $f\colon Y\to X$ un morphisme entre espaces
$k$-analytiques compacts. On suppose que $Y$
est non vide et équidimensionnel.
On pose $n=\dim Y$  
et l'on suppose que la dimension relative 
de $Y$ au-dessus de $X$ a même valeur générique (ou encore minimale) $d$
sur chaque composante
irréductible de $Y$. 
Il existe alors une $(n-d)$-cellule $C$
de $X$ telle que $f(Y)=\overline C$
et 
telle que $f(y)\in C$ pour tout point d'Abhyankar de rang $n$ de $Y$. 
\end{theo}

La preuve consiste à enjoliver $f$ pour se ramener au cas où il admet une factorisation 
$Y\hookrightarrow X_0
\to X$ où $X_0$ est compact, où $Y\hookrightarrow X_0$ 
est une immersion fermée et où $g\colon X_0\to X$ est quasi-étale en dehors d'un
diviseur de Cartier $S$ transverse à $Y$ dont l'image est contenue dans $f(Y)$
et de dimension $<n-d$.
Si l'on pose $X'=g^{-1}(g(S))$ la cellule $C$ du théorème est alors $g(Y\cap X')$ 
et son bord $\partial C$ est égal à $g(S)$.
La propriété (a) est évidente une fois établi que les fibres
géométriques du morphisme quasi-étale et compact $X'\to X\setminus g(S)$ sont de cardinal borné 
(ce qui n'est pas difficile), et on
obtient (b) en faisant une récurrence
sur la dimension de l'image de $f$.

\paragraph{Applications à la platitude}
Nous terminons ce travail en établissant de nouveaux critères de platitude en géométrie de Berkovich, que nous espérons
plus maniables que la définition
originelle. Commençons par quelques
rappels à propos de cette dernière.

Soit $Y\to X$  un morphisme entre bons espaces analytiques sur un même
corps de base $k$. 
Si $\mathscr F$ est un faisceau cohérent sur $Y$ et si $y$ est un point de $Y$ d'image $x$ sur $X$, on dit que $\mathscr F$ est \textit{naïvement}
plat sur $X$ en $y$ si $\mathscr F_y$ est un $\mathscr O_{X,x}$-module plat.
L'adverbe «naïvement» est utilisé parce que cette propriété manque de robustesse : elle n'est en
effet pas stable en général par changement de base bon, un contre-exemple dû à Temkin étant étudié en détail à la section 
4.4 de \cite{ducros2018}.
Pour cette raison, on dit que $\mathscr F$ est $X$-plat en $y$ s'il est naïvement plat en $y$
\textit{et si cette propriété perdure après tout changement de base bon}.
Il y a toutefois un certain nombre de cas
dans lesquels on sait que la platitude naïve de $\mathscr F$ en $y$ est suffisante, c'est-à-dire qu'elle entraîne 
sa platitude en $y$ au sens précédent :  celui où
$y$ appartient à $\mathrm{Int}(Y/X)$\cite[ th. 8.3.4]{ducros2018}, et celui où $X$ est réduit et où $x$ appartient à l'ensemble
$\mathsf A(X)$ des points de $X$ qui sont d'Abhyankar et de rang maximal (sous cette hypothèse
l'anneau local $\mathscr O_{X,x}$ est un corps, si bien que la
platitude naïve de $\mathscr F$ en $y$ est dès lors automatique, et le théorème 10.3.7 de \cite{ducros2018} assure que $\mathscr F$ est plat
en $y$).

Le contre-exemple de Temkin  étudié 
à la section 4.4 de \cite{ducros2018}
est construit en exhibant un point $x$ du plan affine $\A^{2,\mathrm{an}}_k$
tel que $\adhp xU$ soit de dimension $2$
pour tout \textit{voisinage} analytique de $x$ dans $\A^{2,\mathrm{an}}_k$, et un bon domaine
analytique $V$ de $\A^{2,\mathrm{an}}_k$ contenant $x$ et vérifiant l'égalité
$\dim \adhp xV=1$. L'anneau local $\mathscr O_{X,x}$ est alors un corps, si bien que la platitude 
naïve au-dessus de $x$ est automatique.  Si $Y$ désigne l'adhérence réduite de $x$ dans $V$, 
l'immersion $Y\hookrightarrow X$ est donc naïvement plate en $x$, mais
$Y\times_X V\hookrightarrow V$, qui n'est autre que l'immersion
fermée de la courbe $Y$ dans la surface $V$, ne l'est pas. 
Au cœur de ce contre-exemple 
figure ainsi la chute de la dimension de l'adhérence de Zariski d'un point lorsqu'on
se restreint à un domaine analytique bien choisi. Nous démontrons
ici que ce phénomène géométrique est en fait \textit{la seule
obstruction} à la détection naïve de la platitude.

\begin{theo}[Version
simplifiée du théorème \ref{theo-plat-naif}]\label{theo-plat-naif-intro}
Soit $Y\to X$ un morphisme
entre bons espaces $k$-analytiques et soit $y$ un point de $Y$ dont on note $x$
l'image sur $X$ ; supposons que $\dim \adhp xV=\dim \adhp xX$ pour tout domaine analytique $V$ de $X$ contenant $x$. 
Un faisceau cohérent sur $Y$ est
alors $X$-plat en $y$ si et seulement s'il est naïvement $X$-plat en $y$.
\end{theo}

On démontre ce théorème
en se ramenant, grâce aux critères locaux de platitude
standard en algèbre commutative, au cas où
l'espace $X$ est réduit et où
l'on a $\dim \adhp xX=\dim \adhp xV=\dim_x X$
pour tout domaine analytique $V$ de $X$
contenant $x$, ce que nous résumerons dans la discussion qui suit en disant que $x$ est \textit{très générique}
(mentionnons à titre d'exemple que si $x\in \mathsf A(X)$ il est très générique, 
et que si $X$ est une courbe et si $x$ n'est pas rigide, il est très générique). 
Nous suivons alors peu ou prou la preuve du théorème 10.3.7  de \cite{ducros2018} (qui est essentiellement 
le théorème \ref{theo-plat-naif} dans le cas particulier mentionné plus haut où $X$ est réduit et où $x\in \mathsf A(X)$), mais avec une modification importante : 
nous remplaçons la transitivité de l'appartenance à $\mathsf A(\cdot)$ par la transitivité du caractère très générique
(théorème \ref{tres-generique-robuste}), dont la démonstration utilise elle-même de manière cruciale notre description de l'image
d'un fermé de Zariski par un morphisme quasi-étale compact (théorème \ref{theo-casqe-intro}). 

Une conséquence du théorème \ref{theo-plat-naif-intro}
est que la platitude en un point $x$ d'un
espace analytique $X$ peut en fait \textit{toujours} être détectée naïvement, dès
lors qu'on travaille
avec l'anneau local G-topologique $\mathscr O_{X\grot, x}$ qui est la colimite
des $\mathscr O_X(V)$ où $V$ parcourt l'ensemble des domaines analytiques de $X$
contenant $x$ (et pas uniquement celui des voisinages analytiques de $X$, comme c'est le cas
pour l'anneau local usuel $\mathscr O_{X,x}$).

\begin{theo}[Version simplifiée du  théorème \ref{plat-naif-gtop}]\label{plat-naif-gtop-intro}
Soit $Y\to X$ un morphisme entre espaces $k$-analytiques, soit $y$
un point de $Y$ et soit $x$ son image sur $X$. Soit $\mathscr F$ un faisceau cohérent
sur $Y$. Le faisceau $\mathscr F$ est $X$-plat en $y$ si et seulement si
le $\mathscr O_{X\grot,x}$-module $\mathscr F_{Y\grot, y}$ est plat.
\end{theo}

Nous nous assurons par ailleurs que 
ces anneaux locaux G-topologiques
ont de très bonnes propriétés algébriques puisqu'ils sont henséliens et excellents : 
c'est l'objet de la proposition \ref{prop-otheta} ; sa preuve
utilise de manière essentielle le fait que ces propriétés
sont satisfaites par les anneaux locaux usuels des bons espaces analytiques.

\paragraph{Généralisation : les filtres affinoïdes}
Dans ce que nous venons de décrire, un rôle majeur est joué par l'ensemble de tous les domaines analytiques contenant un point donné
$x$ d'un espace analytique $X$, ensemble qu'on peut de façon évidente 
remplacer par n'importe lequel de ses sous-ensembles cofinaux, comme par exemple
celui de tous les domaines affinoïdes de $X$ contenant $x$.
Mais une bonne partie de nos raisonnements s'appliquent
également à d'autres ensembles plus restreints de domaines affinoïdes de $X$ contenant $x$, qui peuvent avoir
leur intérêt : lorsque la valeur absolue de $k$ n'est pas triviale et que $X$ est strict, c'est par exemple le cas 
de l'ensemble des domaines strictement affinoïdes de $X$ contenant $x$, ou de l'ensemble des domaines strictement
affinoïdes $V$ de $X$ tels que $V^{\mathrm{ad}}$ contienne une spécialisation
donnée $x^+$ de $x$ dans $X^{\mathrm{ad}}$, où le suffixe «ad» en exposant fait référence à l'espace adique associé. 

Pour couvrir ces différents cas de manière aussi uniforme que possible, nous travaillons avec la notion
de  \textit{filtre affinoïde} sur un espace pointé $(X,x)$, introduite à la définition \ref{def-filtreaff}. 
Ainsi,
la proposition \ref{prop-otheta} que nous avons mentionnée plus haut assure
par exemple en fait que 
l'anneau local associé à \textit{n'importe quel} filtre affinoïde est hensélien et excellent, 
et cela s'applique en particulier aux anneaux locaux d'un espace adique de type fini sur un corps
(corollaire \ref{coro-huber-excellent}).

\paragraph{Utilisations du présent travail}
Notre procédé pour enjoliver les morphismes 
(théorème \ref{theo-embellissement-intro}) et notre théorème de Chevalley non archimédien
(théorème \ref{theo-chevalley-na-intro}) jouent un rôle essentiel
dans l'article \cite{ducros-t2024} où nous étudions avec Amaury Thuillier les images directes de squelettes.  
Les anneaux locaux G-topologiques $\mathscr O_{X\grot, x}$ et le critère de platitude afférent (théorème 
\ref{plat-naif-gtop-intro}) interviennent dans la version récente de notre travail commun \cite{chambertloir-d2025}
avec Antoine Chambert-Loir 
sur les formes différentielles et courants réels en géométrie non archimédienne. 

Nous avons par ailleurs comme projet, à beaucoup plus long terme, 
d'utiliser notre théorème de Chevalley non archimédien pour dégager une notion
raisonnable de faisceaux constructibles sur un espace analytique. 

\paragraph{Remerciements}
Je sais gré au rapporteur anonyme de sa relecture extrêmement minutieuse du manuscrit et des corrections très pertinentes
qu'il a suggérées.

\section{Rappels, notations, références}\label{rappels}

Dans tout
ce texte nous travaillerons avec
les espaces analytiques ultramétriques
au sens de Berkovich, 
tels que définis
dans \cite{berkovich1993} (les espaces
qu'il avait auparavant introduits dans
\cite{berkovich1990} sont ceux qu'on
appelle maintenant les \textit{bons}
espaces, c'est-à-dire ceux qui ont une 
base de voisinages affinoïdes). 

\paragraph{Généralités}
Si $x$ est un point d'un espace
$k$-analytique $X$, son corps résiduel
complété sera noté $\hr x$. Si
$\phi \colon Y\to X$ est un morphisme
d'espaces $k$-analytiques, sa fibre
en un point $x$ sera notée
$\phi^{-1}(x)$ ou $Y_x$ ; c'est un 
espace $\hr x$-analytique.

Un espace $k$-analytique $X$ est muni
d'une topologie au sens classique et 
d'une topologie ensembliste, 
la G-topologie, qui la raffine. Le site
correspondant $X\grot$ est muni
d'un faisceau de $k$-algèbres que 
Berkovich note $\mathscr O_{X\grot}$ 
mais que nous noterons simplement
$\mathscr O_X$. Il est cohérent
(\cite[Lemme 0.1]{ducros2009} ; voir
aussi
la note de bas de page
de \cite[1.3.1]{ducros2018}
pour la 
rectification d'une
erreur dans la preuve signalée par J. Poineau). 
Nous appellerons \textit{faisceau
cohérent} sur $X$ tout $\mathscr O_X$-module cohérent ; il s'agit donc d'un
faisceau sur le site $X\grot$. 
Si $\mathscr F$ est un faisceau cohérent
sur $X$ et si $\phi\colon Y\to X$
est un morphisme d'espaces
$k$-analytiques, nous noterons
$\mathscr F_Y$ le faisceau 
cohérent $\phi^*\mathscr F$ 
sur $Y$. 

L'espace $X$ possède aussi une topologie
naturelle plus grossière que sa topologie
usuelle,
la topologie \textit{de Zariski}, 
dont les fermés sont les lieux des
zéros des faisceaux cohérents d'idéaux, 
\textit{cf.} \cite{ducros2009}. 
Le caractère ouvert (ou fermé) pour la
topologie de Zariski est G-local, 
\textit{cf.} \cite[prop. 4.2]{ducros2009} ; mais on prendra
garde que si $V$ est un
domaine analytique de $X$, la
topologie de Zariski de $V$ est en 
général plus fine que la topologie
induite par la topologie
de Zariski de $X$. 

Lorsque nous dirons qu'une propriété vaut \textit{génériquement} 
sur un espace $X$, cela signifiera que son lieu de validité
contient un ouvert \textit{de Zariski} dense de $X$.

\paragraph{Espaces $\Gamma$-stricts}
Soit $k$ un corps
ultramétrique complet
et soit
$\Gamma$ un sous-groupe
de $\R_+^\times$ tel
que $\Gamma \cdot \abs{k^\times}$
soit non trivial ; autrement dit, 
$\Gamma$ est non trivial si la valeur
absolue de $k$ est triviale. 
Nous utiliserons abondamment 
la notion d'espace $k$-analytique
\textit{$\Gamma$-strict} 
introduite au chapitre
3 de \cite{ducros2018}. 
Informellement, un espace
$k$-analytique est $\Gamma$-strict 
s'il peut être défini en ne faisant
intervenir que des paramètres réels
appartenant à $\Gamma$ ;
ainsi tout espace $k$-analytique
est $\R_+^\times$-strict,
et si $k$ n'est
pas trivialement valué,
un espace $k$-analytique
est $\{1\}$-strict si et seulement
s'il est strict. 
(La non-trivialité
de $\Gamma\cdot
\abs{k^\times}$
sert à
garantir que tout point
d'un espace affinoïde $\Gamma$-strict
a une base de voisinages affinoïdes
et $\Gamma$-stricts.)

Si $X$ est un espace $k$-analytique $\Gamma$-strict et si
$Y$ est un sous-espace analytique fermé de $X$, tout domaine analytique
$\Gamma$-strict de $Y$ est G-recouvert par des domaines de la forme
$V\cap Y$ où $V$ est un domaine analytique $\Gamma$-strict de $X$: c'est 
une conséquence immédiate de la version $\Gamma$-stricte du théorème de 
Gerritzen-Grauert, voir le paragraphe 0.20.7
de \cite{ducros2012b}. 

\paragraph{Déclinaison analytique de certaines
notions schématiques }
Nous nous servirons librement des 
analogues en théorie de Berkovich d'un certain nombre de notions 
de base de théorie des schémas : la théorie de la dimension, pour laquelle nous
renvoyons par exemple
à l'article \cite{ducros2007} ; la théorie
des composantes irréductibles
d'un espace analytique
développée
à la section 4
de \cite{ducros2009} ; la théorie de la 
connexité et de l'irréductibilité \textit{géométriques} \cite[section 7]{ducros2009}, 
section 7) ; et les propriétés usuelles de l'algèbre commutative 
(caractère réduit, régulier, etc.) ainsi que leurs variantes géométriques 
dans le contexte analytique 
(elles sont largement étudiées dans
\cite{ducros2009}, 
mais l'on pourra aussi se reporter aussi au chapitre 2 de \cite{ducros2018}). 

Si 
$\phi \colon Y\to X$ est un morphisme d'espaces $k$-analytiques et
$y$ un point de $Y$ d'image
$x$ sur $X$,
nous noterons $\dim_y \phi$
et appellerons dimension 
de $\phi$ en $y$,
ou dimension relative
de $Y$ sur $X$ en $y$,
la dimension en $y$ de la fibre $Y_x$.
La fonction $y\mapsto \dim_y \phi$
est semi-continue supérieurement
pour la topologie de Zariski 
sur $Y$ (\cite{ducros2007},
théorème 4.9). 

Soit $X$
un espace $k$-analytique et soit $x$ un point de $X$. Nous noterons $d_k(x)$ la somme du
degré de transcendance résiduel de
$\hr x$ sur $k$ et du rang rationnel de
$\abs{\hr x^\times}/\abs{k^\times}$
(on peut aussi décrire $d_k(x)$
de façon un peu plus ramassée comme
le degré de transcendance \textit{résiduel gradué}
de $\hr x$
au sens de \cite{temkin2004}, voir aussi l'appendice A de \cite{ducros2018}). 
L'intérêt technique majeur de cet invariant
provient de l'égalité
\[\dim X=\sup_{x\in X}d_k(x)\]
(avec la convention $\dim \emptyset=-\infty$). En particulier 
on a pour tout $x\in X$ l'inégalité $d_k(x)\leq \dim_x X$, et nous noterons
$\mathsf A(X)$ l'ensemble des points de $X$ pour lesquels on a égalité. Il est immédiat que $x\in
\mathsf A(X)$ si et seulement si
l'adhérence de Zariski $\adhp xX$ est une composante irréductible de $X$ de dimension $d_k(x)$
et que dans ce cas, 
$\adhp xV$ est une composante irréductible de $V$ de dimension 
$d_k(x)$ pour tout domaine
analytique $V$ de $X$ contenant $x$.

Ce qui précède permet 
d'étendre la définition de la dimension
(globale et locale) 
aux parties quelconques
de $X$ : si $E$ est un sous-ensemble
de $X$, on définit sa dimension 
comme le supremum
des $d_k(x)$ pour $x\in E$, et si $x\in E$, on définit $\dim_x E$
comme le minimum des $\dim U$ où $U$ parcourt l'ensemble
des voisinages ouverts de $x$ dans $E$.

Précisons
enfin que même si cela n'apparaît
pas dans les notations, 
la dimension en géométrie 
de Berkovich est toujours implicitement
\textit{relative au corps de base $k$}
(et est donc \textit{stricto sensu}
une
dimension \textit{$k$-analytique},
bien que ce qualificatif
soit la plupart
du temps omis). Par
exemple si $r\notin \abs{k^\times}$
et si l'on note $k_r$ le corps
$k$-affinoïde $k\{r^{-1}T,rT^{-1}\}$, la 
dimension $k$-analytique
de $\mathscr M(k_r)$ est
égale à $1$ alors 
que sa dimension 
$k_r$-analytique est égale à $0$. 
Mais en pratique cela ne prêtera
pas à confusion, le corps de base étant
toujours clairement
indiqué par le contexte. 
Indiquons simplement pour
qu'il n'y ait pas d'ambiguïté que
si une partie $E$ de $X$
est explicitement définie comme
contenue 
dans la fibre en un point $t$ d'un
morphisme d'espaces
$k$-analytiques $X\to T$, sa
dimension désignera sauf mention expresse
du contraire
sa dimension
$\hr t$-analytique, c'est-à-dire le 
supremum des
$d_{\hr t}(x)$ pour $x\in E$. 

Nous aurons besoin du lemme élémentaire
suivant dont nous donnons la preuve
faute de référence dans la littérature. 

\begin{lemm}\label{lem-boorne-dimension}
Soit $Y\to X$ un morphisme
entre espaces $k$-analytiques, soit
$y$ un point de $Y$ et soit
$x$ son image sur $X$. On a alors
\[\dim_y Y\leq \dim_x X+
\dim_y Y_x.\]
\end{lemm}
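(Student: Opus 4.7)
Plan de preuve. Posons $n=\dim_y Y$, $m=\dim_x X$, $d=\dim_y Y_x$ et notons $\phi\colon Y\to X$ le morphisme donné. L'outil central est la formule d'additivité
\[d_k(y')=d_k(x')+d_{\hr{x'}}(y'),\]
valable pour tout $y'\in Y$ d'image $x'\in X$, et qui traduit l'additivité du degré de transcendance résiduel gradué pour les extensions de corps ultramétriques complets (\textit{cf.} \cite{temkin2004}, ainsi que l'appendice A de \cite{ducros2018}). L'idée générale est alors de sélectionner un point $y'$ suffisamment générique au voisinage de $y$ pour faire jouer cette formule.

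Plus précisément, la dimension prenant des valeurs entières, je commencerais par choisir des voisinages ouverts $U$ de $y$ dans $Y$ et $V$ de $x$ dans $X$ réalisant les infimums qui définissent $n$ et $m$, à savoir $\dim U=n$ et $\dim V=m$. Par semi-continuité supérieure pour la topologie de Zariski de la fonction $y'\mapsto \dim_{y'}\phi$ (\cite{ducros2007}, théorème 4.9), il existe par ailleurs un voisinage ouvert de Zariski $\Omega$ de $y$ dans $Y$ sur lequel $\dim_{y'}\phi\leq d$. L'intersection $U'=U\cap\phi^{-1}(V)\cap\Omega$ est encore un voisinage ouvert (ordinaire) de $y$, puisque $\Omega$ est en particulier ouvert pour la topologie usuelle ; contenue dans $U$ mais encore voisinage ouvert de $y$, elle est de dimension exactement $n$.

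Puisque $\dim U'=n$ s'écrit comme le supremum des $d_k(y')$ pour $y'\in U'$, et qu'il s'agit d'un supremum d'entiers majorés, il est atteint : on peut sélectionner $y'\in U'$ avec $d_k(y')=n$. En notant $x'=\phi(y')\in V$, on a $d_k(x')\leq \dim V=m$, tandis que $y'\in \Omega$ fournit $d_{\hr{x'}}(y')\leq \dim_{y'} Y_{x'}=\dim_{y'}\phi\leq d$. L'additivité donne alors $n=d_k(y')\leq m+d$, qui est l'inégalité voulue. La seule subtilité véritable est l'articulation entre topologies ordinaire et de Zariski, mais elle se traite sans effort en remarquant qu'un voisinage de Zariski de $y$ est en particulier un voisinage ordinaire de $y$, de sorte que son intersection avec un voisinage ouvert ordinaire reste un voisinage ouvert ordinaire.
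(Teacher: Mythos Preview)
Your proof is correct and takes a genuinely different route from the paper's. The paper argues by extension of the base field so as to make $x$ and $y$ rigid, whereupon the three dimensions involved become Krull dimensions of the noetherian local rings $\mathscr O_{Y,y}$, $\mathscr O_{X,x}$ and $\mathscr O_{Y_x,y}$, and the inequality is then the classical one for a local morphism of noetherian local rings (Stacks Project, Tag 00OM). You, by contrast, stay entirely inside the Berkovich formalism: you exploit the description $\dim U'=\sup_{y'\in U'}d_k(y')$, the additivity $d_k(y')=d_k(x')+d_{\hr{x'}}(y')$ for a tower of complete valued fields, and the Zariski semi-continuity of $y'\mapsto \dim_{y'}\phi$ to manufacture a single test point $y'$ witnessing the inequality.

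Each approach has its own appeal. The paper's argument is shorter to write but presupposes the invariance of the analytic dimensions under base extension and their identification with Krull dimensions at rigid points, which are non-trivial facts from \cite{ducros2007}. Your argument is more self-contained and makes transparent \emph{why} the inequality holds: it is simply the pointwise additivity of the graded residual transcendence degree, read through the characterisation of dimension as a supremum of $d_k$. It also avoids any appeal to commutative algebra of noetherian rings. The only slightly delicate step in your argument---mixing the ordinary topology (for $U$, $V$, $\phi^{-1}(V)$) with the Zariski topology (for $\Omega$)---you handle correctly by noting that a Zariski-open neighbourhood is in particular an ordinary open neighbourhood.
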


\begin{proof}
L'assertion
peut se démontrer après extension
arbitraire du corps de base, ce 
qui permet de supposer
$x$ et $y$ rigides. 
Mais les dimensions en jeu sont 
alors des dimensions de Krull d'anneaux
locaux
noethériens, et le lemme est ramené
à un énoncé classique d'algèbre 
commutative,
\textit{cf.}
\cite[
\href{https://
stacks.math.columbia.edu/tag/00OM}
{Tag 00OM}]{stacks-project}.
\end{proof}

\paragraph{Platitude analytique}
La notion de platitude en géométrie analytique
joue un rôle majeur dans le présent travail. Elle est introduite au chapitre 4 de 
\cite{ducros2018} et y est ensuite étudiée systématiquement. 
On y démontre par exemple le théorème 9.2.1 qui assure que si $\phi \colon Y\to X$ est un morphisme
plat entre
espaces $k$-analytiques avec $Y$ compact et $\Gamma$-strict et $X$ séparé alors $\phi(Y)$ est un domaine analytique compact et $\Gamma$-strict
de $X$ (dans le cas où $\Gamma=\{1\}$, c'est dû à Raynaud, \textit{cf.}
\cite[cor. 5.11]{frg2}),
ou le théorème 10.3.2 qui assure que si $Y\to X$ est un morphisme d'espaces
$k$-analytiques et $\mathscr F$ un faisceau cohérent sur $Y$, le lieu de platitude de $\mathscr F$ sur $X$ est un ouvert de Zariski de $X$. 

Un exemple
particulièrement important de morphismes plats est fourni par les morphismes
\textit{quasi-lisses}, qui sont l'objet chapitre 5 de \cite{ducros2018} -- et parmi ceux-ci on appelle \textit{quasi-étales}
ceux qui sont de dimension relative nulle. Nos définitions reposent sur le critère jacobien et 
sont inspirées par le traitement de la lissité
dans \cite{bosch-l-r1990} (et nous vérifions que notre définition de morphisme quasi-étale
est bien équivalente à celle que donne Berkovich dans \cite{berkovich1993}). Le préfixe «quasi» fait référence à la présence
possible de bord ; un morphisme entre espaces $k$-analytiques (resp. entre bons espaces $k$-analytiques) est étale (resp. lisse)
au sens de \cite{berkovich1993} si et seulement s'il est quasi-étale (resp. quasi-lisse) et sans bord, \textit{cf.} \cite
[cor. 5.4.8, 
rem. 5.4.9 et lemme 5.4.11]{ducros2018}.

\section{Enjoliveurs}\label{s-enjoliveurs}

\textit{Nous fixons pour toute la suite du texte un corps
ultramétrique complet $k$ et un sous-groupe $\Gamma$ de
$\R_+^\times$ tel que $\Gamma\cdot \abs{k^\times}\neq \{1\}$}.

Nous renvoyons le lecteur à la section 5 de \cite{ducros2021a} pour la théorie des éclatements
en géométrie analytique, qui ne pose aucune difficulté -- c'est le décalque 
\textit{mutatis mutandis} de la théorie algébrique.

\begin{defi}
Soit $X$ un espace $k$-analytique
$\Gamma$-strict
compact,
et soit $(Z,S)$ un couple
formé d'un espace $X$-analytique compact
et $\Gamma$-strict
$Z$ et d'un diviseur
de Cartier (effectif) $S$
de $Z$. Soit $n$ un entier. 

Une \textit{présentation $\Gamma$-admissible}
de longueur $n$ de $(Z,S)$ 
est la donnée 
d'une factorisation 
\[Z_{2n}=Z\to Z_{2n-1}\to\cdots \to Z_0=X\]
et, pour tout $i$, d'un
sous-espace analytique fermé
$S_i$ de $Z_i$, ces objets
étant assujettis aux conditions suivantes : 
\begin{enumerate}[a]

\item $S_{2n}=S$ et $S_0=\emptyset$ ; 
\item chacun des $Z_i$ est compact et $\Gamma$-strict ; 
\item si $i$ est impair, $Z_{i+1}\to Z_i$
est un éclatement de centre
$S_i$ et $S_{i+1}$
est son diviseur exceptionnel $X_{i+1}\times_X S_i$ ; 
\item si $i$ est pair, $Z_{i+1}\to Z_i$ est quasi-étale
et
$S_{i+1}$ majore $X_{i+1}\times_X S_i$. 
\end{enumerate}
\end{defi}

Si le groupe $\Gamma$ n'a pas besoin d'être spécifié ou, ce qui revient au même,
s'il est égal à $\R_+^\times$, nous parlerons simplement 
de présentation admissible. 

\begin{rema}\label{rem-prolongement-idiot}
On peut toujours 
rajouter artificiellement un cran à une présentation
$\Gamma$-admissible 
$(Z_{2n}\to Z_{2n-1}\to\cdots \to Z_0,(S_i))$
de $(Z,S)$ 
en posant
$Z_{2n+2}=Z_{2n+1}=Z_{2n}$, en prenant les flèches
$Z_{2n+2}\to Z_{2n+1}$ et $Z_{2n+1}\to Z_{2n}$ 
toutes deux égales à l'identité de $Z_{2n}$, et 
en posant $S_{2n+2}=S_{2n+1}=S_{2n}$. 
\end{rema}

\begin{defi}\label{defi-enjoliveur}
Soit $X$ un espace $k$-analytique compact. 
Un \textit{$\Gamma$-enjoliveur} 
de $X$ 
est un couple $(Z,S)$
où $Z$
est un espace $X$-analytique compact
et $\Gamma$-strict, et où $S$
est un diviseur de Cartier effectif de $Z$, 
qui possède une présentation
$\Gamma$-admissible. 

La \textit{classe}
d'un $\Gamma$-enjoliveur $(Z,S)$ est le plus petit
entier $n$ tel que $(Z,S)$ possède
une présentation admissible de longueur $n$. 

Si le groupe $\Gamma$ n'a pas besoin d'être spécifié ou, ce qui revient au même
s'il est égal à $\R_+^\times$, nous parlerons simplement 
d'enjoliveur. 

\end{defi}

Indiquons quelques propriétés des
enjoliveurs qui découlent de
leur définition et des propriétés connues des éclatements et morphismes 
quasi-étales. Soit donc $X$ un espace $k$-analytique et
soit $(Z,S)$ un enjoliveur de $X$.

Le morphisme $Z\to X$ est quasi-étale en dehors de $S$. 
Si $T$ est un diviseur de Cartier de $X$ alors $Z\times_X T$ est un diviseur de Cartier de $Z$. Si $X$ est réduit 
alors $Z$ est réduit.

Si $X$ est purement de dimension $m$ pour un certain entier $m$ il en va de même de $Z$. En tant que diviseur de Cartier
de $Z$ l'espace $S$ est alors vide si $m=0$ et purement de dimension $m-1$ si $m\geq 1$. 

\begin{lemm}\label{lem-coprod-enjoliveurs}
Soit $X$ un espace
$k$-analytique $\Gamma$-strict
et compact, et soient
$(Z,S)$ et $(Z',S')$ deux
$\Gamma$-enjoliveurs sur $X$, 
de classes respectives $n$ et $n'$. 
La somme disjointe $(Z\coprod Z',
S\coprod S')$ est alors un
$\Gamma$-enjoliveur sur $X$. 
Sa classe est majorée par $\max(n,n')$, 
et l'on a même égalité sauf dans 
le cas où $X$
est non vide, où $Z$ et $Z'$ sont
deux ouverts fermés complémentaires
de $X$, et où $S$ et $S'$ sont vides. 
Dans ce cas 
$(Z\coprod Z',S\coprod S')=(X,\emptyset)$
est de classe nulle, mais $\max(n,n')=1$. 
\end{lemm}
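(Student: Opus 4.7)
My plan is to construct a $\Gamma$-admissible presentation of length $\max(n,n')$ for the coproduct by combining the two given presentations level by level, and then to argue that this bound is sharp outside the singled-out degenerate case.

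By Remark \ref{rem-prolongement-idiot}, applied as many times as needed to the shorter of the two presentations, I may assume that $(Z,S)$ and $(Z',S')$ come equipped with $\Gamma$-admissible presentations $Z_{2N}=Z\to\ldots\to Z_0=X$ and $Z'_{2N}=Z'\to\ldots\to Z'_0=X$ of common length $N=\max(n,n')$, with associated Cartier divisors $S_i$ and $S'_i$. I then set $\tilde Z_0=X$, $\tilde S_0=\emptyset$, and $\tilde Z_i=Z_i\coprod Z'_i$, $\tilde S_i=S_i\coprod S'_i$ for $1\leq i\leq 2N$. The verification of the admissibility conditions rests on four easy facts: coproducts of compact $\Gamma$-strict spaces are compact $\Gamma$-strict (this takes care of (b)); the coproduct of two quasi-étale morphisms is quasi-étale, which handles the initial morphism $\tilde Z_1\to X$ (provided $N\geq 1$) as well as the later quasi-étale steps; the coproduct of the two éclatements $Z_{2k}\to Z_{2k-1}$ and $Z'_{2k}\to Z'_{2k-1}$ of respective centers $S_{2k-1}$, $S'_{2k-1}$ identifies with the éclatement of $\tilde Z_{2k-1}$ along $\tilde S_{2k-1}$, its exceptional divisor being $S_{2k}\coprod S'_{2k}=\tilde S_{2k}$; and finally $\tilde S_{2k+1}$ majors $\tilde Z_{2k+1}\times_{\tilde Z_{2k}}\tilde S_{2k}$ componentwise because each $S_i$ (resp. $S'_i$) majors its predecessor's pullback. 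This yields a length-$N$ $\Gamma$-admissible presentation of the coproduct, hence the bound on the class.

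For the lower bound part of the statement (equality outside the exception), I would suppose given a presentation $(\tilde Z_i,\tilde S_i)$ of $(Z\coprod Z', S\coprod S')$ of some length $m$ and try to restrict it to $(Z,S)$. Because $Z$ is clopen in $Z\coprod Z'$, and because éclatements as well as quasi-étale morphisms commute in an obvious sense with restriction to the preimage of a clopen, one produces for each $i$ a clopen $\hat Z_i\subset \tilde Z_i$ with $\hat Z_{2m}=Z$, together with a Cartier divisor $\hat S_i=\tilde S_i\cap \hat Z_i$, forming an admissible presentation of $(Z,S)$ of length $m$; by definition of the class $n$, this forces $m\geq n$, and by symmetry $m\geq n'$, so $m\geq\max(n,n')$. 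The only way this argument can degenerate is if the restriction procedure makes the initial quasi-étale step $\tilde Z_1\to X$ itself accomplish the separation of $Z$ from $Z'$ without any subsequent éclatement being needed; a direct inspection shows that this happens precisely when $S=S'=\emptyset$ and $Z$, $Z'$ are complementary clopen subsets of $X$, in which case $(Z\coprod Z',S\coprod S')=(X,\emptyset)$ has class $0$.

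The main technical obstacle is the rigorous formalization of this clopen-restriction procedure: one must check that $\hat Z_{i-1}$ is well defined at each stage — meaning that the image of $\hat Z_i$ under $\tilde Z_i\to \tilde Z_{i-1}$ is indeed clopen — and that the resulting morphism $\hat Z_i\to \hat Z_{i-1}$ is again an éclatement of $\hat S_{i-1}$ with exceptional divisor $\hat S_i$ (resp. is quasi-étale). This boils down to the fact that a clopen subspace of the target of an éclatement is automatically stable under the éclatement, and likewise for quasi-étale morphisms, together with the obvious behaviour of Cartier divisors under clopen restriction.
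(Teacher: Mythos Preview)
Your construction of the upper bound is correct and coincides with the paper's argument: pad the shorter presentation using Remark~\ref{rem-prolongement-idiot} and take the coproduct level by level.

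For the lower bound, however, your clopen-restriction procedure does not work. The claim that a clopen $\hat Z_{2m}=Z\subset \tilde Z_{2m}$ descends to a clopen $\hat Z_{2m-1}\subset \tilde Z_{2m-1}$ is false in general, precisely because an éclatement can disconnect a connected space. Take for instance two irreducible components of $\tilde Z_{2m-1}$ meeting along a locus contained in the center $\tilde\Sigma_{2m-1}$; their strict transforms in $\tilde Z_{2m}$ may be disjoint and lie in $Z$ and $Z'$ respectively, while no clopen of $\tilde Z_{2m-1}$ separates them. Your assertion that ``a clopen subspace of the target of an éclatement is automatically stable under the éclatement'' is correct but irrelevant: you need to go from a clopen in the \emph{source} to one in the \emph{target}, and this is exactly the direction that fails.

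The paper circumvents this by a different device, which modifies only the final blow-up rather than propagating clopens downward. Given a presentation $(T_i,\Sigma_i)_{0\leq i\leq 2p}$ of $(Z\coprod Z',S\coprod S')$ with $p>0$, the decomposition $Z\coprod Z'$ induces a decomposition $F\coprod F'$ not of $T_{2p-1}$ itself but only of the union of its irreducible components \emph{not contained in} $\Sigma_{2p-1}$. One then replaces the center $\Sigma_{2p-1}$ by the closed subspace $G$ defined by the intersection $\mathscr I\cap\mathscr J$ of the ideal of $\Sigma_{2p-1}$ with the ideal of $F'$ (reduced structure). Blowing up $T_{2p-1}$ along $G$ yields exactly $Z$ with exceptional divisor $S$, so $(Z\to T_{2p-1}\to\ldots\to T_0,(S,G,\Sigma_{2p-2},\ldots,\emptyset))$ is a $\Gamma$-admissible presentation of $(Z,S)$ of length $p$, forcing $p\geq n$. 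The case $p=0$ is then precisely the exceptional case in the statement.
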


\begin{proof}
Par symétrie
des arguments on peut supposer
$n\geq n'$. 
Choisissons une présentation 
$\Gamma$-admissible $(Z_{2n}\to \cdots \to Z_0, (S_i))$
de $(Z,S)$ et une présentation
$\Gamma$-admissible
$(Z'_{2n'}\to \cdots \to Z'_0, (S'_i))$ de $(Z',S')$. 
Par la remarque \ref{rem-prolongement-idiot}, on peut
prolonger de façon triviale
la présentation 
$\Gamma$-admissible 
$(Z'_{2n'}\to \cdots \to Z'_0, (S'_i))$ en une
présentation
$\Gamma$-admissible 
$(Z'_{2n}\to \cdots \to Z'_0, (S'_i))$ de $(Z',S')$ de longueur 
$n$. Il est alors immédiat que 
\[\left(Z_{2n}\coprod Z'_{2n}\to \cdots \to Z_1
\coprod Z'_1\to X, ((S'_i\coprod S_i)_{1\leq i
\leq 2n}, \emptyset)\right)\] est une présentation
$\Gamma$-admissible
de $(Z\coprod Z',S\coprod S')$. Il s'ensuit que 
le couple
$(Z\coprod Z',S\coprod S')$ est un $\Gamma$-enjoliveur de
$X$ de classe
$\leq n$. 
Nous allons montrer que sa classe est exactement $n$ sauf
dans le cas exceptionnel mentionné dans l'énoncé. 
Soit $(T_{2p}\to \cdots \to T_0, (\Sigma_i))$
une présentation
$\Gamma$-admissible de $(Z\coprod Z',S\coprod S')$.
On distingue maintenant deux cas. 

Supposons tout d'abord que $p=0$, c'est-à-dire que 
$(Z\coprod Z',S\coprod S')$ est de classe nulle. Dans ce
cas $Z$ et $Z'$ sont des ouverts fermés complémentaires
de $X$, et $S$ et $S'$ sont vides. 
Réciproquement, si  $Z$ et $Z'$ sont des
ouverts fermés complémentaires de $X$
et si $S$ et $S'$ sont vides, alors $(Z\coprod
Z',S\coprod S')$ est de classe nulle, et $(Z,S)$
(resp. $(Z'
,S')$
est de classe nulle si $Z=X$
(resp. $Z'=X$) et de classe $1$ sinon.

Supposons maintenant que $p>0$. 
On dispose alors d'un morphisme 
de $Z\coprod Z'$ vers $T_{2p-1}$ qui est un
éclatement de centre $S_{2p-1}$ et de 
diviseur exceptionnel $S\coprod S'$.  
La décomposition de l'espace
éclaté comme somme disjointe
$Z\coprod Z'$ correspond à une
décomposition de la réunion (ensembliste)
des composantes irréductibles de $T_{2p-1}$
non contenues dans $\Sigma_{2p-1}$ comme une union
disjointe $F\coprod F'$ d'ouverts fermés (de sorte
que $Z$ soit l'image réciproque de $F$, et $Z'$ celle 
de $F'$). 
Soit $\mathscr I$ le faisceau cohérent
d'idéaux sur $T_{2p-1}$ correspondant à $\Sigma_{2p-1}$, 
et soit $\mathscr J$ celui correspondant à $F'$ 
(muni disons de sa structure réduite). L'intersection
$\mathscr I\cap \mathscr J$ définit alors un sous-espace
analytique fermé $G$ de $T_{2p-1}$ ; par construction,
l'éclaté de $T_{2p-1}$ le long de $G$ s'identifie à $Z$,
et son diviseur exceptionnel à $S$. 
En conséquence, 
\[(Z\to T_{2p-1}\to \cdots\to T_0, (S, G,
\Sigma_{2p-2},\ldots,\Sigma_0))\]
est une présentation
$\Gamma$-admissible de $(Z,S)$ de
longueur $p$. Il vient $p\geq n$, et la classe
de $(Z\coprod Z',S\coprod S')$ est
dès lors égale à $n$.
\end{proof}

Nous allons maintenant expliquer 
comment un enjoliveur d'un sous-espace
analytique fermé peut se relever
en un enjoliveur de l'espace ambiant. 
Nous aurons pour ce faire besoin de 
deux lemmes.

\begin{lemm}\label{separe-sansbord}
Soit $Y\to X$ un morphisme topologiquement séparé entre espaces $k$-analytiques. 
Ce morphisme
est séparé si et seulement si sa diagonale $\delta\colon Y\to Y\times_X Y$ est sans bord.
\end{lemm}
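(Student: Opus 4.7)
Mon plan est de m'appuyer sur deux ingrédients. D'une part, la diagonale $\delta\colon Y\to Y\times_X Y$ est toujours, sans aucune hypothèse sur $f$, une \emph{immersion} analytique (G-localement sur son image une immersion fermée) : pour tout $y\in Y$ on peut choisir des voisinages affinoïdes $V$ de $y$ dans $Y$ et $U$ de $f(y)$ dans $X$ tels que $f(V)\subset U$ ; alors $V\times_U V$ est un domaine affinoïde de $Y\times_X Y$ contenant $\delta(y)$, et la restriction de $\delta$ s'y identifie à la diagonale affinoïde $V\to V\times_U V$, qui est une immersion fermée. La séparation topologique permettra par ailleurs de recouvrir le complémentaire de $\delta(Y)$ par des domaines analytiques disjoints de $\delta(Y)$, sur lesquels $\delta$ est trivialement une immersion fermée. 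D'autre part, je compte utiliser la caractérisation standard (\cf \cite{ducros2018}) selon laquelle une immersion analytique est une immersion fermée si et seulement si elle est topologiquement un plongement fermé \emph{et} sans bord.

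Le sens direct du lemme sera alors immédiat : si $f$ est séparé, $\delta$ est par définition une immersion fermée, donc en particulier sans bord. Pour la réciproque, la séparation topologique et l'hypothèse fourniront respectivement le caractère topologiquement fermé et le caractère sans bord de $\delta$ ; son caractère d'immersion analytique étant automatique d'après ce qui précède, la caractérisation rappelée permettra de conclure que $\delta$ est une immersion fermée, \ie que $f$ est séparé.

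La principale difficulté sera pour l'essentiel bibliographique : il faudra invoquer avec précision la caractérisation des immersions fermées comme «immersion analytique + plongement topologique fermé + sans bord», et s'assurer qu'elle est bien disponible dans \cite{ducros2018} sous une forme applicable à la diagonale. Une fois ce point acquis, l'argument est purement combinatoire et ne requiert aucun calcul.
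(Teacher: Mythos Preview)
Your overall strategy is sound in spirit, but it outsources precisely the non-trivial content of the lemma to a ``caractérisation standard'' whose availability in \cite{ducros2018} is, as you yourself concede, uncertain. Two points deserve comment.

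First, a minor correction: you write ``voisinages affinoïdes $V$ de $y$ dans $Y$'', but the lemma is stated for arbitrary $k$-analytic spaces, which need not be good. You should work with affinoid \emph{domains} containing $y$ rather than affinoid neighbourhoods. This does not break the argument that $\delta$ restricted to $V\times_U V$ is a closed immersion, but it does mean that the domains $V\times_U V$ need not be neighbourhoods of $\delta(y)$, and the collection you describe (these domains together with the open complement of $\delta(Y)$) is \emph{not} a G-cover of $Y\times_X Y$ in Berkovich's sense: at a point $\delta(y)$ one would need finitely many domains of the collection, all containing $\delta(y)$, whose union is a neighbourhood --- but the complement does not contain $\delta(y)$, and a finite union $\bigcup_j V_{i_j}\times_X V_{i_j}$ misses off-diagonal points of $(\bigcup_j V_{i_j})\times_X(\bigcup_j V_{i_j})$.

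Second, and more substantially: the characterization you invoke (``immersion analytique + plongement topologique fermé + sans bord $\Rightarrow$ immersion fermée'') is exactly where the work lies, and the paper does not cite such a statement from \cite{ducros2018}. Instead it proves the lemma directly: after reducing to $X$ affinoïde and $Y$ paracompact, it takes a locally finite affinoid cover $(V_i)$ of $Y$, sets $W=\bigcup_i V_i\times_X V_i$, and observes that $\delta$ factors as a closed immersion $Y\hookrightarrow W$ followed by the inclusion of the closed analytic domain $W$. The crucial step --- the only place where ``sans bord'' is used --- is an appeal to \cite{temkin2004}, Cor.~5.7, which forces $\delta(Y)$ into the \emph{topological interior} $W'$ of $W$. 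One then patches the open sets $W'$ and $(Y\times_X Y)\setminus\delta(Y)$. If you try to prove your black-box characterization, you will find yourself reproducing exactly this argument. So your plan is not wrong, but it is not a shortcut either: the bibliographic difficulty you flag is in fact the mathematical heart of the matter.
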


\begin{proof}
La condition est clairement nécessaire puisqu'une immersion fermée est sans bord. 
Réciproquement, supposons que $\delta$ est sans bord. Pour montrer que 
$Y\to X$ est séparé, on peut raisonner G-localement sur $X$ et donc le supposer affinoïde. 
Dans ce cas $Y$ est topologiquement séparé, et il suffit de démontrer que $Y\to X$
est localement séparé \cite[prop. 1.4.2]{berkovich1993}. Si $U$ est un ouvert de $Y$, la restriction
$\delta|_U$
est composée de la diagonale $U\to U\times_X U$
et de l'immersion ouverte $U\times_X U\hookrightarrow Y\times_X Y$ ; par conséquent, 
la diagonale  $U\to U\times_X U$ est sans bord. On peut donc raisonner sur un recouvrement ouvert de $Y$, 
ce qui permet de supposer ce dernier paracompact. Il possède alors un recouvrement localement fini $(V_i)$
par des domaines affinoïdes, la réunion $W$ des $V_i\times_XV_i$ est un domaine analytique fermé
de $Y\times_X Y$ et $\delta$
se factorise par une immersion fermée $Y\hookrightarrow W$. Comme $W\hookrightarrow
Y\times_X Y$ est localement séparé et comme $\delta$ est sans bord, il résulte de \cite[cor. 5.7]{temkin2004}
que $\delta(Y)\subset \mathrm{Int}(W/Y\times_X Y)$, qui n'est autre
que l'intérieur \textit{topologique}
$W'$ de $W$ dans $Y\times_X Y$. Puisque $\delta(Y)$ est fermé dans $W$, il est fermé dans $Y\times_X Y$, 
et si l'on note $Y'$ son ouvert complémentaire alors $(W',Y')$ est un recouvrement ouvert de $Y\times_X Y$ ; 
par construction, $\delta^{-1}(W')\hookrightarrow W'$ et $\delta^{-1}(Y')\hookrightarrow Y'$ sont des immersions 
fermées, si bien que $\delta$ est une immersion fermée. 
\end{proof}

Nous nous proposons maintenant de démontrer un lemme de prolongement
des morphismes quasi-étales au voisinage d'un sous-espace analytique fermé ; 
dans le cas sans bord, c'est-à-dire dans le cas
des morphismes étales, il est essentiellement dû à Berkovich, 
\emph{cf.} \cite[Prop. 4.3.4]{berkovich1993}, et nous nous ramènerons
à ce dernier résultat. 

\begin{enonce}[remark]{Conventions et notations}
Nous nous servirons de la  notion 
de \emph{germe} d'un espace analytique le long
d'une de ses parties, pour laquelle
nous renvoyons à 
\cite[3.4]{berkovich1993}.

Nous dirons qu'un morphisme d'espaces
$k$-analytiques $f$ est \emph{compact} si
$f^{-1}(K)$ est compact pour toute partie
compacte $K$ du but ; comme un espace
de Berkovich a une base de voisinages
compacts, 
cela revient à demander que $f$ soit
topologiquement séparé et
topologiquement propre.

Soit $X$ un espace $k$-analytique $\Gamma$-strict paracompact.
Nous définissons les catégories suivantes :

\begin{itemize}[label=$\diamond$]

\item On note $\mathsf C(X)$ (resp. $\mathsf C^{\mathrm{comp}}(X)$) 
la sous-catégorie pleine de la catégorie des espaces $X$-analytiques
$\Gamma$-stricts et paracompacts
dont le morphisme structural vers $X$
est séparé et quasi-étale (resp. séparé, quasi-étale et compact).

\item Pour tout sous-espace analytique fermé $Z$ de $X$ nous noterons
$\mathsf D(X,Z)$ 
(resp. $\mathsf D^{\mathrm{comp}}(X,Z)$)
la sous-catégorie pleine de la catégorie des
$(X,Z)$-germes d'espaces $k$-analytiques constituée
des $(X,Z)$-germes isomorphes à un $(X,Z)$-germe de la forme $((Y,f^{-1}(Z)),f)$ où $Y$
est $\Gamma$-strict et paracompact et où 
$f\colon Y\to X$ est un morphisme séparé et quasi-étale (resp. séparé, quasi-étale
et compact). Le plus souvent un objet de $\mathsf D(X,Z)$ ou de $\mathsf D^{\mathrm{comp}}(X,Z)$
sera noté $(Y,T)$ sans référence explicite à son morphisme structural vers $(X,Z)$. 
\end{itemize}

Si $(Y,T)$ est un objet de $\mathsf D(X,Z)$ (resp. $\mathsf D^{\mathrm{comp}}(X,Z)$), le morphisme
structural de $(Y,T)$ vers $(X,Z)$ induit un morphisme de $T$ vers $Z$ qui par définition 
des catégories en jeu fait de $T$
un objet de $\mathsf C(Z)$ (resp. $\mathsf C^{\mathrm{comp}}(Z)$). Le diagramme
commutatif 
\[\begin{tikzcd}
\mathsf D^{\mathrm{comp}}(X,Z)\ar[d,"{(Y,T)\mapsto T}"']\ar[r,hook]&\mathsf D(X,Z)\ar[d,"{(Y,T)\mapsto T}"]\\
\mathsf C^{\mathrm{comp}}(Z)\ar[r,hook]&\mathsf C(Z)
\end{tikzcd}
\]
résume la situation. 
\end{enonce}

\begin{lemm}\label{lemm-quasi-etale-ferme}
Soit $X$ un espace $k$-analytique
$\Gamma$-strict
et
paracompact, et soit $Z$ un sous-espace $k$-analytique fermé de $X$.

\begin{enumerate}[1]
\item Le foncteur $(Y,T)\mapsto T$
de $\mathsf D(X,Z)$ vers $\mathsf C(Z)$ est pleinement fidèle.
\item Le foncteur $(Y,T)\mapsto T$
de $\mathsf D^{\mathrm{comp}}(X,Z)$ vers $\mathsf C^{\mathrm{comp}}(Z)$ est une équivalence
de catégories. 
\end{enumerate}
\end{lemm}

\begin{proof}
Montrons tout d'abord (1). 
Soient $(Y,T)$ et $(Y',T')$ deux objets de $\mathsf D(X,Z)$.
Soit $\tau$
un $Z$-morphisme $T\to T'$. 
Le morphisme
$\tau$
induit une section
$\sigma$ de l'espace $T$-analytique $T'\times_Z T$.
Par séparation, le sous-ensemble
$\sigma(T)$ de $T'\times_Z T$ est contenu dans
l'intérieur relatif de $
T'\times_Z T$ sur $T$. Puisqu'une immersion fermée est sans bord, 
cette image est même contenue dans $\Omega:=\mathrm{Int}
(Y'\times_X Y/ Y)$. Remarquons
que comme le morphisme $
Y'\times_X Y\to Y$ est quasi-étale,
$\Omega$ est exactement l'ensemble des points
en lesquels il est étale.
En vertu de \cite[prop.
4.1.2, cor. 4.1.3 (ii) et prop. 4.3.4]{berkovich1993},
la section $\sigma$ du faisceau
sur $T_{\mathrm{\acute et}}$ représenté par 
$\Omega\times_Y T\to T$ s'étend en une unique
section du faisceau sur $(Y,T)_{\mathrm{\acute et}}$
représenté par $\Omega$; mais comme $Y$ est paracompact, 
une telle section n'est autre qu'un germe de section du
morphisme $\Omega\to Y$ au voisinage de $T$ \cite[prop. 4.3.5]{berkovich1993}.
Il s'ensuit que le morphisme $\tau$ initialement
donné se prolonge en un unique germe de $X$-morphisme
de $Y$ vers $Y'$ au voisinage de $T$, ce qui achève la preuve de
(1). 

Montrons maintenant (2). 
Il résulte de (1) que le foncteur $(Y,T)\mapsto T$ de $\mathsf D^{\mathrm{comp}}(X,Z)$ vers 
$\mathsf C^{\mathrm{comp}}(Z)$
est pleinement fidèle ; il reste à montrer
son essentielle surjectivité. On peut pour cela, en raisonnant composante
par composante, supposer $X$ connexe. Il est alors dénombrable
à l'infini et en particulier réunion d'une famille dénombrable $(X_i)$
de domaines affinoïdes
$\Gamma$-stricts. 
Soit $T$ un espace $k$-analytique 
paracompact et $\Gamma$-strict et soit $f\colon T\to Z$ un morphisme séparé, compact et quasi-étale. 
Pour tout $i$, posons $Z_i=Z
\times_X X_i$ et $T_i=T\times_Z Z_i$. Chacun des $T_i$ est 
$\Gamma$-strict et compact 
car $f$ est compact, et il possède donc lui-même un recouvrement
affinoïde $\Gamma$-strict fini $(T_{ij})_j$.
Fixons $(i,j)$, et soit $t\in T_{ij}$. Comme $T_{ij}$ et $Z_i$ sont bons, il existe un
morphisme étale $U\to Z_i$ et un $Z$-isomorphisme entre un voisinage affinoïde
$\Gamma$-strict
$S$ de $t$ dans $T_{ij}$
et un domaine affinoïde de $U$. Quitte à restreindre $U$ (et $S$) on peut en vertu
de la prop. 4.2.1
de \cite{berkovich1993}
supposer que 
$U$ est de la forme $V\times_{X_i} Z_i$ où $V\to X_i$ est étale et séparé. En tant que domaine analytique compact
et $\Gamma$-strict 
de $U$, on peut écrire $S$ sous la forme $\Sigma\cap U=\Sigma\times_{X_i}Z_i$ pour un certain domaine
analytique compact et $\Gamma$-strict $\Sigma$ de $V$. 
Il découle de ce qui précède qu'il existe un recouvrement dénombrable 
$(S_\ell)_{\ell \in \Z_{\geq 0}}$ de $T$ par des domaines
analytiques compacts
et $\Gamma$-stricts tel que chaque $S_\ell$ s'écrive $\Sigma_\ell \times_X Z$
pour un certain espace $X$-analytique $\Sigma_\ell$ qui est compact et $\Gamma$-strict
et dont le morphisme structural
vers $X$ est quasi-étale et séparé. Nous allons construire récursivement une famille $(\Sigma'_\ell)$
d'espaces $k$-analytiques compacts
et $\Gamma$-stricts munis de morphismes quasi-étales vers $X$ possédant les propriétés suivantes : 
\begin{itemize}[label=$\diamond$]
\item Pour tout $\ell$ chacun des $\Sigma_m$ avec $m\leq \ell$ s'identifie (au-dessus de $X$)
à un domaine analytique de $\Sigma'_\ell$, l'espace $\Sigma'_\ell$ est la réunion des
$\Sigma_m$ pour $m\leq \ell$, et $\Sigma'_\ell\times_X Z$ s'identifie à la réunion
$S'_\ell$ des $S_m$ pour $m\leq \ell$, d'une manière compatible aux plongements
$\Sigma_m\hookrightarrow \Sigma'_\ell$. 

\item Pour tout $\ell$ l'espace $\Sigma'_\ell$ s'identifie (au-dessus de $X$)
à un domaine analytique de $\Sigma'_{\ell+1}$, 
et ce de manière compatible avec
les flèches $\Sigma_m\hookrightarrow \Sigma'_\ell$ et 
$\Sigma_m\hookrightarrow \Sigma'_{\ell+1}$ pour tout $m\leq \ell$. 
\end{itemize}

On pose $\Sigma'_0=\Sigma_0$. Supposons $\ell>0$ et $\Sigma'_\ell$ construit. 
Il existe un domaine analytique
compact et $\Gamma$-strict $F$ de $\Sigma_{\ell+1}$ et un domaine
analytique compact et $\Gamma$-strict $G$ de $\Sigma'_\ell$ tels que
$S_{\ell+1}\cap S'_\ell$
s'identifie à $F\times_X Z$ en tant que domaine analytique de $S_{\ell+1}$, et
à $G\times_X Z$ en tant que domaine analytique de $S'_\ell$. 
Il résulte alors de l'assertion (1) déjà démontrée que quitte à restreindre $F$
et $G$ on peut supposer qu'il existe un $X$-isomorphisme $\iota\colon F\simeq G$ compatible
aux identifications de  $F\times_X Z$ et  $G\times_X Z$ avec $S_{\ell+1}\cap S'_\ell$, et 
l'on définit $S'_{\ell+1}$ comme le recollement de $S_{\ell+1}$ et $S'_\ell$ le long
de $\iota$. 
La famille $(\Sigma'_\ell)$ étant construite, on note $\Sigma$ le $X$-espace obtenu en recollant
les $\Sigma'_\ell$ le long des plongements $\Sigma'_\ell\hookrightarrow \Sigma'_{\ell+1}$. 
Par construction $\Sigma$ est un espace $k$-analytique $\Gamma$-strict muni d'un morphisme 
compact et quasi-étale $\Sigma\to X$ et d'une identification $\Sigma\times_X Z\simeq T$. 
Toutefois, compte-tenu de la manière un peu brutale avec laquelle nous avons procédé aux recollements, 
il est possible que $\Sigma\to X$ ne soit pas séparé. 
Mais puisque $T\to Z$ est séparé, le morphisme diagonal
$\Sigma\times_X\Sigma \to \Sigma$ est sans bord au-dessus de $Z$ ; par compacité des flèches en jeu, 
il est sans bord au-dessus d'un voisinage de $Z$, et $\Sigma\to X$ est alors séparé au-dessus de ce même
voisinage en vertu du lemme \ref{separe-sansbord}, ce qui termine la démonstration. 
\end{proof}

\begin{enonce}[remark]{Notation}
Soit $V\to Z$ un morphisme entre espaces $k$-analytiques, soit $S$ un sous-espace analytique fermé 
de $Z$ et soit $\mathscr F$ un faisceau cohérent sur $V$. Nous noterons $\mathscr F\oslash_S V$ 
le quotient de $\mathscr F$ par son sous-faisceau constitué des sections à support contenu  
ensemblistement dans $V\times_Z S$. C'est encore
un faisceau cohérent, \textit{cf.} \cite[6.1]{ducros2021a}. 

Le quotient $\mathscr O_V\oslash_Z S$ de $\mathscr O_V$ définit un sous-espace analytique fermé
de $V$ que nous noterons $V\oslash_Z S$. C'est par construction l'adhérence analytique de
$V\setminus (V\times_Z S)$ dans $V$  (l'adhérence analytique est le pendant
de l'adhérence schématique en géométrie analytique, \textit{cf.} \cite[lemme-définition 2.7]{ducros2021a}).
\end{enonce}

\begin{prop}\label{prop-enjoliveur-ferme}
Soit $X$ un espace $k$-analytique et $\Gamma$-strict compact et soit
$Y$ un sous-espace analytique fermé de $X$. 
Soit $(W,T)$ un 
$\Gamma$-enjoliveur de $Y$ et soit $n$ sa classe. Il existe un $\Gamma$-enjoliveur $(Z,S)$ de $X$
de classe $\leq n$ 
tel que les propriétés suivantes soient satisfaites : 

\begin{enumerate}[1]

\item $W=(Y\times_X Z)\oslash_Z S$ ;

\item 
$T=W\times_Z S$ ; 
\item l'image de $S$ sur $X$ est égale à celle de $T$. 
\end{enumerate}
\end{prop}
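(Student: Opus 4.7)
Le plan est de procéder par récurrence sur la classe $n$ de l'enjoliveur $(W,T)$. Le cas $n=0$ est immédiat : on a alors $(W,T)=(Y,\emptyset)$, et le couple $(Z,S)=(X,\emptyset)$ convient trivialement. Pour $n\geq 1$, on fixe une présentation $\Gamma$-admissible $(W_i,T_i)_{0\leq i\leq 2n}$ de $(W,T)$. Les données tronquées $(W_i,T_i)_{0\leq i\leq 2n-2}$ constituent une présentation $\Gamma$-admissible de l'enjoliveur $(W_{2n-2},T_{2n-2})$ de $Y$, qui est donc de classe $\leq n-1$. L'hypothèse de récurrence fournit alors un enjoliveur $(Z',S')=(Z_{2n-2},S_{2n-2})$ de $X$ de classe $\leq n-1$, muni d'une présentation $(Z_i,S_i)_{0\leq i\leq 2n-2}$, satisfaisant les trois conclusions relativement à $(W_{2n-2},T_{2n-2})$.

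Il suffit alors d'étendre cette présentation par deux crans, une étape quasi-étale suivie d'un éclatement. L'espace $W_{2n-2}=(Y\times_X Z')\oslash_{Z'}S'$ est un sous-espace analytique fermé de $Z'$, de sorte que l'équivalence de catégories fournie par le lemme \ref{lemm-quasi-etale-ferme} (appliquée au couple $(Z',W_{2n-2})$) prolonge le morphisme compact, séparé et quasi-étale $W_{2n-1}\to W_{2n-2}$ en un morphisme compact, séparé et quasi-étale $Z_{2n-1}\to Z'$ vérifiant $Z_{2n-1}\times_{Z'}W_{2n-2}\simeq W_{2n-1}$ ; la compacité de $Z_{2n-1}$ résulte alors de celle de $Z'$. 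On définit ensuite un diviseur de Cartier effectif $S_{2n-1}$ sur $Z_{2n-1}$ qui majore $Z_{2n-1}\times_{Z'}S'$ et dont la restriction à $W_{2n-1}$ est exactement $T_{2n-1}$ : cela est rendu possible par le fait que $T_{2n-1}$ majore $W_{2n-1}\times_{W_{2n-2}}T_{2n-2}$, qui n'est autre, grâce à l'identité $T_{2n-2}=W_{2n-2}\times_{Z'}S'$ issue de la récurrence, que $W_{2n-1}\times_{Z_{2n-1}}(Z_{2n-1}\times_{Z'}S')$. On prend enfin $Z=Z_{2n}$ comme éclaté de $Z_{2n-1}$ le long de $S_{2n-1}$ et $S=S_{2n}$ son diviseur exceptionnel ; on obtient ainsi une présentation $\Gamma$-admissible de longueur $n$ du $\Gamma$-enjoliveur $(Z,S)$ de $X$.

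La propriété (1) s'obtiendra via le changement de base des éclatements : $Y\times_X Z$ est l'éclaté de $Y\times_X Z_{2n-1}$ le long de $Y\times_X S_{2n-1}$, et $(Y\times_X Z)\oslash_Z S$ est alors la transformée stricte de $Y\times_X Z_{2n-1}$ sous cet éclatement. Comme $Z_{2n-1}\to Z'$ est quasi-étale donc plat, et que l'opération $\oslash$ se comporte bien par changement de base plat, l'hypothèse de récurrence entraîne l'identité $W_{2n-1}=(Y\times_X Z_{2n-1})\oslash_{Z_{2n-1}}(Z_{2n-1}\times_{Z'}S')$ ; combinée à l'inclusion $Z_{2n-1}\times_{Z'}S'\subset S_{2n-1}$, elle garantit que $Y\times_X Z_{2n-1}$ et $W_{2n-1}$ coïncident hors de $S_{2n-1}$, de sorte que leurs transformées strictes coïncident aussi, et que celle de $W_{2n-1}$ s'identifie à l'éclaté de $W_{2n-1}$ le long de $T_{2n-1}=S_{2n-1}\cap W_{2n-1}$, c'est-à-dire à $W$. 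La propriété (2) découlera aussitôt du fait que $T$ est le diviseur exceptionnel de l'éclatement $W\to W_{2n-1}$ de centre $T_{2n-1}$, identifié à $W\times_{W_{2n-1}}T_{2n-1}=W\times_Z S$. La propriété (3) s'obtiendra en combinant la surjectivité d'un éclatement sur son centre (qui donne image de $S$ sur $X$ = image de $S_{2n-1}$ sur $X$ et image de $T$ sur $X$ = image de $T_{2n-1}$ sur $X$) avec l'égalité d'images inductive pour $S'$ et $T_{2n-2}$ et le contrôle du support de $S_{2n-1}$.

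L'obstacle principal sera le choix convenable du diviseur $S_{2n-1}$, qui doit simultanément être de Cartier, majorer le tiré en arrière de $S'$, se restreindre à $T_{2n-1}$ sur $W_{2n-1}$, et induire exactement l'image attendue sur $X$. Surmonter cette difficulté demandera sans doute une analyse locale fine, couplée à une restriction éventuelle de $Z_{2n-1}$ à un domaine analytique compact judicieusement choisi autour de $W_{2n-1}$ — ce qui exploitera la platitude des morphismes quasi-étales et leur bon comportement vis-à-vis des transformées strictes, ainsi que le fait que l'image de $S_{2n-2}$ sur $X$ est, par récurrence, déjà contenue dans $Y$.
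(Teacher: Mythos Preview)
Your overall inductive strategy matches the paper's, but the proposal has a genuine gap at precisely the point you flag as ``l'obstacle principal'', and the obstacle is partly self-inflicted.

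First, a misreading of the definition: in a $\Gamma$-admissible presentation, the odd-indexed $S_{2i-1}$ is merely a \emph{closed analytic subspace} of $Z_{2i-1}$ serving as the center of a blow-up; it is not required to be a Cartier divisor (only the even-indexed $S_{2i}$, being exceptional divisors, are Cartier). Your insistence that $S_{2n-1}$ be an effective Cartier divisor is therefore unnecessary, and in fact impossible in general: its restriction to $W_{2n-1}$ must be $T_{2n-1}$, which itself is only a closed subspace. Moreover, blowing up a Cartier divisor is an isomorphism, so your $Z$ would equal $Z_{2n-1}$ and you would never recover $W$.

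Second, even once this constraint is dropped, you have not produced the center. The paper's construction is short and direct: viewing $E:=T_{2n-1}$ as a closed subspace of $Z_{2n-1}$ (via $W_{2n-1}\hookrightarrow Z_{2n-1}$), let $\mathscr J$ be its ideal sheaf and $\mathscr I$ that of $S'$; set $S_{2n-1}$ to be the closed subspace defined by $(\mathscr I\cdot\mathscr O_{Z_{2n-1}})\cap\mathscr J$. This automatically contains $Z_{2n-1}\times_{Z'}S'$, and since $E$ already contains $W_{2n-1}\times_{Z'}S'$ (via the inductive identity $T_{2n-2}=W_{2n-2}\times_{Z'}S'$), the restriction of $S_{2n-1}$ to $W_{2n-1}$ is exactly $E=T_{2n-1}$. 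Properties (1) and (2) then follow by the strict-transform computation you outline, and (3) follows because $S_{2n-1}$ is set-theoretically contained in $E\cup(Z_{2n-1}\times_{Z'}S')$, whose image on $X$ lies in that of $E$ (hence of $T$) by the inductive image equality. No ``analyse locale fine'' or restriction to a subdomain is needed.
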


\begin{proof}
On raisonne
par récurrence sur $n$. 
Si $n=0$
alors 
$(W,T)=(Y,\emptyset)$ et on peut prendre
$(Z,S)=(X,\emptyset)$. Supposons
maintenant $n>0$ et le résultat vrai pour les entiers $<n$. 
Puisque $(W,T)$ est de classe $n$ il existe un
$\Gamma$-enjoliveur
$(U,\Sigma)$ de classe $n-1$ sur $Y$, un espace
$k$-analytique compact et $\Gamma$-strict $V$, un
morphisme
quasi-étale $V\to U$, et un sous-espace
analytique fermé $E$ de $V$ contenant
$V\times_U\Sigma$ et tel que
$W$ s'identifie  l'éclatement de $V$
le long de $E$, et $T$ à son diviseur exceptionnel. 
L'hypothèse de récurrence assure alors
l'existence d'un
$\Gamma$-enjoliveur
$(D,R)$ de $X$ de classe $\leq n-1$
tel que les trois propriétés suivantes soient satisfaites : 
\begin{enumerate}[a]

\item $U=(Y\times_X D)\oslash_D R$ ;

\item 
$\Sigma=U\times_D R$ ; 
\item l'image de $R$ sur $X$ est égale à celle de $\Sigma$. 
\end{enumerate}
Il résulte du lemme
\ref{lemm-quasi-etale-ferme}
qu'il existe un espace $k$-analytique compact et $\Gamma$-strict 
$D'$ et un morphisme quasi-étale et séparé
$D'\to D$ tel que $V$
s'identifie à
$D'\times_D U$. 
Soit $\mathscr I$ le faisceau
cohérent d'idéaux de $\mathscr O_D$ définissant $R$ et soit 
$\mathscr J$ le faisceau cohérent
d'idéaux de $\mathscr O_{D'}$ définissant
$E\hookrightarrow V\hookrightarrow D'$ ; notons $F$
le sous-espace analytique fermé
de $D'$ défini par le faisceau d'idéaux
$(\mathscr I
\cdot\mathscr O_{D'})\cap\mathscr J$ ; notons $Z$ l'éclaté
de $D'$ le long de $F$ et $S$
le diviseur exceptionnel de cet éclatement. 
Par construction, $(Z,S)$ est un
$\Gamma$-enjoliveur de $X$ de classe $\leq
n$. 
L'espace $W$ s'identifie à l'éclatement de $V$
le long de $E$. Puisque
$\Sigma=U\times_D R$ et puisque $E$ contient $V\times_U
\Sigma$, le sous-espace analytique 
fermé $V\times_{D'}F$ de $V$ s'identifie à $E$. Par
conséquent, $W$ est la transformée stricte de $V$
relative à l'éclatement $Z\to D'$ si bien 
que $T=W\times_Z S$ et que
\begin{eqnarray*}
W&=&(Z\times_{D'}V)\oslash_Z S\\
&=&(Z\times_D U)\oslash_Z S\\
&=&(Z\times_D ((Y\times_X D)\oslash_D R))
\oslash_ZS\\
&=&
(Z\times_X Y)\oslash_Z S,
\end{eqnarray*}
où la dernière égalité provient du fait 
que $S$ contient par définition $Z\times_D R$. 
L'égalité $T=W\times_Z S$ assure que l'image de $T$ sur $X$
est contenue dans celle de $S$. Et l'image de $S$ sur $X$
est contenue dans celle de $F$, qui est elle-même contenue
dans la réunion de l'image de $E$ et de l'image de $R$, 
donc dans la réunion de l'image de $E$ et de l'image de $\Sigma$. 
Puisque $E$ contient $V\times_U \Sigma$, il s'ensuit que l'image de
$S$ sur $X$ est contenue dans l'image de $E$, et partant dans celle
de $T$. Par conséquent, l'image de $S$ sur $X$ est finalement
égale à celle 
de $T$, ce qui achève la démonstration. 
\end{proof}

 \section{Aplatissement d'un faisceau cohérent : 
 rappels et compléments}\label{s-aplatissement}

Le but de cette section est 
d'énoncer le
théorème
principal de \cite{ducros2021a}, ou
plus précisément une version
un peu renforcée de ce dernier, mais
qu'on peut en fait essentiellement 
déduire de sa preuve, comme nous
allons l'expliquer ensuite. 

\begin{enonce}[remark]{Notations}
Soit $f\colon Y\to X$ un morphisme d'espaces $k$-analytiques
et soit $\mathscr F$ un faisceau cohérent sur $Y$. 
Nous noterons $\pl {\mathscr F}X$ le lieu de platitude
relative de $\mathscr F$ sur $X$. C'est un ouvert de Zariski
de $Y$ dont le fermé complémentaire sera noté
$\ql {\mathscr F}X$. Pour tout entier $n$, nous
noterons $\qln {\mathscr F}Xn$ l'ensemble
des points de $\ql {\mathscr F}X$ 
en lesquels ce dernier est de dimension relative 
$\geq n$ sur $X$. On dit que $\mathscr F$ est 
\textit{$X$-plat en dimensions supérieures ou égales à $n$}
si $\qln {\mathscr F}Xn$ est vide. 
Enfin, nous noterons
$\qlns {\mathscr F}Xn$ le
saturé de $\qln {\mathscr F}Xn$ pour la relation d'équivalence
définie par $f$, c'est-à-dire $f^{-1}(f(\qln {\mathscr F}Xn))$.
Lorsque $\mathscr F=\mathscr O_Y$ nous écrirons 
$\pl YX$, etc. au lieu de $\pl {\mathscr O_Y}X$, etc. 
\end{enonce}

\begin{theo}[version un
peu renforcée du théorème 6.6
de \cite{ducros2021a}]\label{theo-aplatissement-bis}

Soit $f\colon Y\to X$ un morphisme entre espaces
$k$-analytiques compacts et $\Gamma$-stricts.
Soient $N$ et $n$ deux entiers avec $n\leq N$ et soit
$\mathscr F$ un faisceau cohérent sur $Y$ 
qui est plat en dimensions $\geq N$. 
Il existe alors un 
$\Gamma$-enjoliveur $(Z,S)$ de $X$
de classe $\leq N-n$ et un domaine analytique compact et 
$\Gamma$-strict $V$ de $Y\times_X Z$ tels que les
conditions suivantes soient satisfaites.

\begin{enumerate}[1]
\item $\mathscr F_V\oslash_Z S$ est $Z$-plat
en dimensions $\geq n$ ; 
\item $V\setminus V\times_Z S$ est contenu dans
l'image réciproque de $Y\setminus
\qln {\mathscr F}Xn$ ; 

\item $V\times_Z S$ est contenu dans l'image réciproque
de $\qlns {\mathscr F}Xn$ ; 

\item l'image de $V\to Y$ contient
$Y\setminus \qlns {\mathscr F}Xn$ ; 

\item l'image de $V\times_Z S\to Y$ contient
tout point de $\qln {\mathscr F}Xn$ dont l'image sur $X$
n'est pas adhérente à $f(\qln {\mathscr F}Xn)\cap \mathsf A(X)$ ; 

\item l'image de $S$ sur $X$ est 
contenue dans celle de $\supp
F$, et
si $\mathscr F\neq 0$ cette
image est 
de dimension $\leq \dim \supp
F-n$.

\end{enumerate}

\end{theo}

L'énoncé ci-dessus comprend deux
améliorations mineures par rapport
à celui du théorème 
6.6 de \cite{ducros2021a}, et une
vraie nouveauté, à savoir l'assertion (6).

Nous allons tout d'abord
discuter brièvement
des deux améliorations mineures.
La première est la suivante : nous aplatissons
$\mathscr F$ (en dimensions $\geq n$)
au moyen
\textit{d'un seul}
$\Gamma$-enjoliveur, alors
que le théorème
6.6 
de \cite{ducros2021a} le fait au moyen
d'une \textit{famille
finie} de
$\Gamma$-enjoliveurs ; mais cette
différence entre les deux énoncés
est illusoire, puisqu'il suffit de remplacer
la famille finie d'enjoliveurs du 
 théorème
6.6 de  
de \cite{ducros2021a} par
sa somme disjointe
(qui est bien un enjoliveur
de la classe requise d'après
le lemme \ref{lem-coprod-enjoliveurs}). 
La seconde amélioration concerne la classe
du $\Gamma$-enjoliveur $(X,S)$. Nous la majorons
par $N-n$ où $N$ est 
\textit{n'importe quel} entier tel que 
$\mathscr F$ soit $X$-plat
en dimensions $\geq N$, 
alors que dans le théorème 6.6 
de \cite{ducros2021a} nous établissons
cette majoration pour un entier $N$
spécifique, à savoir $d+1$ où $d$ est un
entier majorant la dimension des fibres de
$\supp F\to X$ (notons que la $X$-platitude
de $\mathscr F$ en
dimensions $\geq d+1$ est tautologique). 
Or la preuve procède par récurrence
descendante sur $n$, en partant de $n=d+1$, 
et la seule propriété de l'entier $d+1$
qui est utilisée est précisément la 
$X$-platitude de $\mathscr F$ en dimensions
$\geq d+1$, qui rend trivial le cas où
$n=d+1$ (l'enjoliveur
$(X,\emptyset)$ et le domaine
analytique $V=Y$ conviennent alors)
et assure ainsi l'initialisation 
de la récurrence. Il est dès lors
licite dans l'énoncé de remplacer
$d+1$ par n'importe quel entier $N$
tel que 
$\mathscr F$ soit $X$-plat
en dimensions $\geq N$, avec exactement
la même démonstration, à ceci
près que l'on commence par traiter
le cas trivial où $n=N$. 

Venons-en maintenant à l'assertion (6). 
Comme nous l'avons dit, celle-ci ne figure
pas dans notre énoncé originel, mais nous
allons voir qu'elle se déduit relativement
aisément de sa preuve. 
Cette dernière construit en fait étape
par étape les termes $(Z_i,S_i)$
d'une présentation
$\Gamma$-admissible
de $(Z,S)=(Z_{2N-2n},S_{2N-2n})$ en 
partant de $(Z_0,S_0)=(X,\emptyset)$.
De plus elle fournit pour tout entier
$i$ entre $0$ et $N-n$ un domaine
analytique compact et $\Gamma$-strict
$V_{2i}$ tel que $(Z_{2i}, S_{2i}, V_{2i})$
satisfasse aux conclusions du théorème 
pour l'entier $N-i$ (au lieu de $n$), 
avec bien entendu $V_0=Y$ et $V_{2N-2n}=V$. 
Fixons $i<N-n$ et supposons
$Z_{2i}, S_{2i}$ et $V_{2i}$
construits. Le cœur de la preuve
consiste à fabriquer 
$Z_{2i+1}$ et $S_{2i+1}$
(d'où $Z_{2i+2}$ et $S_{2i+2}$
se déduisent par éclatement), ainsi que 
le domaine $V_{2i+2}$. 
Nous n'allons pas redécrire en détail 
l'ensemble du procédé, mais simplement 
évoquer les faits qui vont être utiles
pour établir (6). 
Le sous-espace
analytique fermé
$S_{2i+1}$ de
$Z_{2i+1}$ est défini par
le produit
du faisceau
d'idéaux inversible
définissant $Z_{2i+1}\times_{Z_{2i}}
S_{2i}$ et d'un faisceau d'idéaux
$\mathscr J_i$ dont on note $\Sigma_i$
le sous-espace analytique fermé
associé ; ensemblistement, on a donc
l'égalité $S_{2i+1}= Z_{2i+1}\times_{Z_{2i}}
S_{2i}
\cup \Sigma_i$. De plus, il existe :
\begin{itemize}[label=$\diamond$]
\item une
décomposition de $S_{2i+1}$ comme somme disjointe $\Omega_i\coprod
\Omega'_i$ de deux ouverts fermés ; 
\item un espace $k$-analytique
$\Gamma$-strict compact $\Theta_i$ 
muni d'un morphisme quasi-lisse 
$\Theta_i\to \Omega_i$ dont les fibres
sont géométriquement intègres de
dimension $N-i-1$ ; 
\item un sous-espace analytique fermé
réduit $F_i$ de $\Theta_i$, qui est contenu
dans l'image de la composée d'une famille
finie de $Z_{2i+1}$-morphismes finis, 
composée
dont le but est $\Theta_i$
et dont la source s'immerge dans 
$\mathrm{Supp}(\mathscr F_{V_{2i}
\times_{Z_{2i}}Z_{2i+1}}),$
\end{itemize}
tels que $\mathscr J_i
\mathscr O_{\Omega_i'}=
\mathscr O_{\Omega_i'}$
et que $\mathscr J_i
\mathscr O_{\Omega_i}$ soit l'idéal
des coefficients associé au sous-espace
$F_i$ du $\Omega_i$-espace $\Theta_i$ 
(pour la définition 
et l'existence de l'idéal des coefficients
dans la situation considérée ici, 
voir \cite[th. 3.9]{ducros2021a}). 
Soit $x$ un point de $\Sigma_i$
non situé sur  $Z_{2i+1}\times_{Z_{2i}}
S_{2i}$ ; notons qu'il appartient
en particulier au lieu quasi-étale de
$Z_{2i+1}$ sur $X$. 
Par ce qui précède
$x$ est situé sur $\Omega_i$
et il appartient au lieu des zéros
de l'idéal des coefficients
de $F_i$, ce qui signifie que
$F_{i,x}=\Theta_{i,x}$. 
Puisque $\Theta_{i,x}$ est de
dimension $N-i-1$, 
le point $x$ possède un antécédent $y$
sur $\Theta_i$ tel que $d_{\hr x}(y)=N-i-1$. 
Étant situé sur $F_i$, le point $y$ est 
l'image d'un point $z$ du support
de $\mathscr F_{Z_{2i+1}\times_
{Z_{2i}}V_{2i}}$
par une composée de morphismes
finis, si bien que 
$d_k(y)=d_k(z)$. 
Comme $x$ est situé
sur le lieu quasi-étale
de $Z_{2i+1}\to X$, 
le point $z$ est situé sur le lieu 
quasi-étale de $Z_{2i+1}\times_
{Z_{2i}}V_{2i}\to Y$ ; si $t$
désigne son image sur $Y$ on a donc
$d_k(z)=d_k(t)\leq \dim \supp
F$. Il s'ensuit que l'image 
$\xi$ de $x$ sur $X$ appartient à l'image
de $\supp F$ et que
\[d_k(\xi)=d_k(x)\leq \dim
\supp F-N+i+1.\] 
Soit $s$ un point de $S$. Puisque $S_0$
est vide et puisque $S_{2i}$ est
pour tout $i$ entre $1$ et $N-n$ l'image 
réciproque de $S_{2i-1}$ sur $Z_{2i}$, 
il existe $i$ entre $0$ et $N-n-1$
tel que l'image de $s$ sur $Z_{2i+1}$
appartienne à $S_{2i+1}$ mais pas
à l'image réciproque de $S_{2i}$ ; cette
image est donc située sur
\[\Sigma_i\setminus (Z_{2i+1}\times_{Z_{2i}}
S_{2i}),\] et il résulte alors
de ce qui précède que l'image $\xi$ de $s$
sur $X$ appartient à l'image de $\supp 
F $ (qui est dès lors non vide)
et qu'on a
\[d_k(\xi)\leq \dim \supp F
-N+i+1 \leq \dim \supp F -n.\]

\section{Comment enjoliver un morphisme 
arbitraire}\label{s-comment}

Dans \cite{ducros2021a}, nous utilisons notre théorème
d'aplatissement pour enjoliver de différentes façons un morphisme
entre espaces analytiques, par changement
de base le long d'un enjoliveur et passage à la transformée
stricte (\textit{op. cit.}, théorèmes 7.3 et 7.5). Nous
nous proposons dans cette section 
de reprendre les méthodes de démonstration
de ces énoncés et de les améliorer 
pour obtenir un résultat 
qui les coiffe et les étend (théorème \ref{theo-embellissement}
ci-dessous).

\begin{lemm}\label{proprietes-enjoliveurs}
Soit $Y\to X$ un morphisme entre espaces $k$-analytiques, 
soit $(Z,S)$ est un  enjoliveur de $X$, 
et soit 
$V$ un domaine analytique de $Y\times_X Z$. 
\begin{enumerate}[1]
\item La flèche $V\oslash_Z S\to Y$ est génériquement
quasi-étale, et $V\oslash_Z S$ est réduit dès que $Y$ est réduit.
\item Si $Y$ est purement de dimension $n$ pour un certain entier $n$,
il en va de même  de  $V\oslash_Z S$.
\item Supposons qu'il existe un entier $d$ tel que 
$Y\to X$ soit génériquement de dimension
$d$. Alors $V\oslash_Z S
\to Z$ est génériquement de dimension $d$. 
\end{enumerate}
\end{lemm}

\begin{proof}
L'ouvert $(Y\times_X Z)\setminus (Y\times_X S)$ est quasi-étale sur $Y$, 
et $V\setminus (V\times_Z S)$ est en particulier réduit dès que $Y$ est réduit ; par
conséquent, $V\oslash_Z S\to Y$ est génériquement
quasi-étale, et $V\oslash_Z S$ est réduit dès que $Y$ est réduit, d'où (1). 

Si $v$ est un point de $\mathsf A(V\oslash_Z S)$ alors
$V\oslash_Z S\to Y$ est quasi-étale en $v$ par ce qui précède, si
bien que l'image
$y$
de
$v$  sur $Y$ appartient à $\mathsf A(Y)$ et vérifie l'égalité $d_k(v)=d_k(y)$. 
Il en résulte que si $Y$ est purement de dimension $n$ pour un certain entier $n$,
il en va de même  de  $V\oslash_Z S$, d'où (2). 

Montrons enfin (3). Soit $v$ un point
de $\mathsf A(V\oslash_Z S)$. Comme $v$
est situé par ce qui précède u-dessus d'un point de $\mathsf A(Y)$, 
la dimension de $Y\times_X Z \to Z$ en $v$ est égale à $d$, et il en va de même
de la dimension de $V\to Z$ en $v$ ; et puisque 
$v$ appartient à $\mathsf A(V\oslash_Z S)$, il possède un voisinage ouvert dans $V$
qui est aussi un ouvert de $V\oslash_Z S$, si bien que la dimension de $V\oslash_Z S
\to Z$ en $v$ est encore égale à $d$, ce qui conclut. 
\end{proof}

Le lemme suivant récapitule un certain nombre de faits qui nous seront utiles 
à propos de la dimension relative, et qui se trouvent déjà essentiellement
dans \cite[\S 7]{ducros2021a}.

\begin{lemm}
\label{complements-dim}
Soit $f\colon Y\to X$ un morphisme entre espaces $k$-analytiques. 

\begin{enumerate}[1]
\item 
Pour tout $y\in \mathsf A(Y)$
on a $d_k(f(y))=d_k(y)-\dim_y f.$

\item Soit $y$ un point de $\mathsf A(Y)$ en lequel $f$ est plat. 
Le point $f(y)$ appartient alors à $\mathsf A(X)$. 

\item Si $X$ est réduit et si $x\in \mathsf A(X)$ alors $f$ est plat en tout point 
de $f^{-1}(x)$. 

\item Supposons que $Y$ est non vide soit $d$ la valeur minimale de $\dim f$. 

\begin{enumerate}[b]
\item 
Si $\{y\in Y,\dim_y f\geq d+1\}$ est d'intérieur vide dans $Y$ il coïncide avec
 $\qln YX{d+1}$. 

\item 
Supposons que $Y$ est de dimension finie $n$ et que 
la dimension de $\{y\in Y,\dim_y f\geq d+1\}$ est $<n$. 
Alors $\dim f(Y)=n-d$ et $f^{-1}(f(y))$ est purement de dimension $d$
pour tout $y\in Y$ tel que $d_k(y)=n$. 
\end{enumerate}
\end{enumerate}
\end{lemm}

\begin{proof}
L'assertion (1) est simplement \cite[Lemma
7.1 (1))]{ducros2021a}. 

Soit $y$ un point de $\mathsf A(Y)$ en lequel $f$ est plat et soit 
$d$ la dimension relative de $f$ en $y$. Par (1) on a 
$d_k(f(y))=d_k(y)-d$, et par platitude on a 
$\dim_{f(y)} X=\dim_y Y-d=d_k(y)-d$, si bien que $\dim_{f(y)} X=d_k(f(y))$ ; autrement dit, 
$f(y)\in \mathsf A(X)$, d'où (2). 

L'assertion (3) est \cite[Th. 10.3.7]{ducros2018} ; l'assertion (4a) 
est \cite[Lemme 7.2]{ducros2021a}. 

Montrons enfin (4b). 
Soit $x$ est un point 
appartenant à $f(Y)$. La fibre $f^{-1}(x)$ est non vide
et partout de dimension
supérieure ou égale à $d$ si bien qu'il existe
$z\in f^{-1}(x)$ tel que $d_{\hr x}(z)\geq d$, et
il vient 
\[n\geq d_k(z)=d_{\hr x}(z)+d_k(x)\geq d_k(x)+d.\] En conséquence $d_k(x)\leq n-d$, 
d'où la majoration $\dim f(Y)\leq n-d$. Et observons que si $d_k(x)=n-d$ 
on a nécessairement  $d_{\hr x}(z)=d$ pour tout $z$ comme 
ci-dessus, si bien que $f^{-1}(x)$ est purement de dimension $d$. 
Par ailleurs, comme $\dim Y=n$ l'ensemble
des points $y$ de $Y$ tels que $d_k(y)=n$ est non vide. 
Soit $y$ un tel point. 
Puisque le lieu des points en lesquels $\dim f\geq d+1$ est de dimension 
strictement inférieure à $n$ 
il ne contient pas $y$, ce qui entraîne que 
$\dim_y f=d$, et partant en vertu de (1)
que 
$d_k(f(y))=d_k(y)-\dim_y f=n-d$, si bien que 
$f(Y)$ est finalement exactement de dimension $n-d$ ; et 
par ailleurs
l'égalité $d_k(f(y))=n-d$ implique comme observé plus haut
que la fibre $f^{-1}(f(y))$ est purement de dimension $d$. 
\end{proof}

\begin{theo}\label{theo-embellissement}
Soit $Y\to X$ un morphisme d'espaces
$k$-analytiques.
On suppose que $X$ et $Y$ sont compacts,
$\Gamma$-stricts
et réduits, et 
que $Y$ est non vide et équidimensionnel ;
on note
$n$ sa dimension. On désigne par $d$
et $D$ les valeurs minimale
et maximale de la dimension 
relative de $Y$ sur $X$, et l'on suppose
que le lieu des points en lesquels
cette dimension vaut $d$ est dense.

\begin{enumerate}[A]
\item Il existe un
$\Gamma$-enjoliveur $(Z,S)$ sur $X$ de classe $\leq D+1$, 
et un domaine
analytique compact et $\Gamma$-strict $V$
de $Y\times_X Z$ possédant les propriétés suivantes :

\begin{enumerate}[2]

\item La flèche $V\oslash_Z S\to Y$
est surjective. 

\item La flèche $V\oslash_Z S\to Z$ se factorise par un morphisme
plat et surjectif sur un sous-espace analytique fermé réduit $F$ de $Z$
purement de dimension $n-d$. 

\item L'image de $S$
sur $X$ est contenue dans celle 
de $Y$ et est de dimension 
strictement inférieure à $n-d$.

\end{enumerate}

\item Pour tout $(Z,S, V,F)$ possédant les propriétés
ci-dessus l'espace $V\oslash_Z S$ est réduit et purement de dimension $n$, la flèche  
$V\oslash_Z S\to Y$ est génériquement quasi-étale, la flèche 
$V\oslash_Z S\to F$ est purement
de dimension relative $d$, et $S\cap F$ est d'intérieur vide dans $F$. 
\end{enumerate}

\end{theo}

\begin{proof}
Commençons par montrer l'assertion
(B). 
Il résulte du lemme \ref{proprietes-enjoliveurs}
que $V\oslash_Z S\to Y$ est génériquement quasi-étale et que
$V\oslash_Z S$ est réduit et purement de dimension $n$
(ce dernier point provenant du fait que $Y$ est lui-même purement de dimension $n$), 
et  il découle alors de la platitude de 
$V\oslash_ZS\to F$ que ses fibres sont purement de dimension
$d$. Enfin, soit $s\in S\cap F$.  La fibre de $V\oslash_Z S$ en $s$ est non vide par surjectivité 
de $V\oslash_Z S\to F$, et elle est purement de dimension $d$. Elle possède donc un point
$t$ tel que $d_{\hr s}(t)$ soit égal à $d$. Mais par définition de $V\oslash_Z S$, l'image réciproque de $S$
sur ce dernier est d'intérieur vide ; en conséquence, $n>d_k(t)=d_k(s)+d$, et $d_k(s)<n-d$. 
Il en résulte
que $S\cap F$ est d'intérieur vide dans $F$. 

Il reste donc désormais à démontrer (A).
Nous allons tout d'abord décrire deux
situations
particulières dans lesquelles il suffit, 
pour démontrer l'énoncé (A)
pour le morphisme $Y\to X$, de le démontrer
pour un morphisme auxiliaire. Cela jouera
un rôle crucial dans notre preuve en
permettant de se réduire par étapes
au cas génériquement plat.

\textit{Première situation particulière.}
Supposons qu'il existe un sous-espace analytique fermé 
et réduit $X_0$
de $X$ tel que 
le morphisme 
$Y\to X$ admette une factorisation de
la forme $Y\to X_0\hookrightarrow X$, et
tel que l'assertion (A) vaille
pour le morphisme $Y\to X_0$ ; elle vaut alors pour $Y\to X$.

En effet, il existe par
hypothèse un
$\Gamma$-enjoliveur $(W,T)$ de $X_0$
de classe
inférieure ou égale à $D+1$ et
un domaine analytique compact et $\Gamma$-strict
$U$ de $Y\times_{X_0}W$
satisfaisant aux conditions suivantes.  

\begin{enumerate}[i]

\item La flèche $U\oslash_W T\to Y$ est surjective. 

\item  La flèche $U\oslash_W T\to W$ se factorise par un morphisme
plat et surjectif sur un sous-espace analytique fermé réduit de $W$
purement de dimension $n-d$. 

\item L'image de $T$ sur $X_0$ est contenue dans celle de $Y$ 
et
de dimension $<n-d$. 

\end{enumerate}
En vertu de la
proposition \ref{prop-enjoliveur-ferme},
il existe un
$\Gamma$-enjoliveur $(Z,S)$ de $X$ 
de classe
$\leq D+1$ tel que $W$ s'identifie
à $(X_0\times_X Z)\oslash_ZS$,
tel que $T=W\times_Z S$, et tel que
l'image de $S$ sur $X$ soit égale à celle de $T$.
Choisissons un domaine analytique compact et $\Gamma$-strict
$V$ de $Y\times_X Z$ dont la trace sur
son sous-espace analytique fermé $Y\times_X W
=Y\times_{X_0}W$ est égale à $U$. On a alors
\[V\oslash_Z S=U\oslash_Z S=U\oslash_W T\]
où la première égalité vient du fait que 
$W=(X_0\times_X Z)\oslash_ZS$, et la seconde du fait
que $T=W\times_Z S$. Le triplet $(Z,S,V)$ satisfait alors
l'assertion (A) relativement au morphisme 
$Y\to X$.

\textit{Seconde situation particulière.}
Soient $c$ et $c'$ deux entiers
positifs ou nuls
de somme $D-d$, soit
$(W,T)$ un
$\Gamma$-enjoliveur sur $X$
de classe $c$, 
et soit $U$
un domaine analytique compact et $\Gamma$-strict
de $Y\times_X W$. 
On suppose que $U\oslash_W T \to Y$ est surjective,
que
l'image de $T$ sur $X$
est contenue 
dans celle de $Y$, 
que la dimension relative de
$U\oslash_W T\to W$ est majorée
par $d+c'$ et
que $U\oslash_W T\to W$ 
satisfait l'assertion $(\mathsf A)$
(on 
sait d'après
les remarques préliminaires en début 
de section \ref{s-comment} que 
$U\oslash_W T$
est purement de dimension $n$
et que la dimension 
générique de $U\oslash_W T\to W$
est égale à $d$ ; quant
à $W$, il est
réduit). 
Nous allons montrer que $Y\to X$ satisfait également
$\mathsf A$. 
Dire que $U\oslash_W T\to W$ 
satisfait l'assertion $(\mathsf A)$
signifie qu'il existe un
$\Gamma$-enjoliveur
$(Z,S)$ de $W$
de classe $d+c'+1$
et un domaine analytique compact
et $\Gamma$-strict $V$
de $(U\oslash_W T)\times_W Z$
tels que les conditions suivantes
soient satisfaites 

\begin{enumerate}[i]

\item La flèche $V\oslash_Z S\to U\oslash_W T$ est
surjective. 
\item  La flèche
$V\oslash_Z S\to Z$ se factorise par un morphisme
plat et surjectif sur un sous-espace
analytique fermé réduit de $Z$
purement de dimension $n-d$. 

\item L'image de $S$
sur $W$ est contenue 
dans celle de $U\oslash_W T$
et de dimension $<n-d$.

\end{enumerate}
Soit $\mathscr I$ le faisceau cohérent d'idéaux sur $Z$
correspondant à $S$, et soit $\mathscr J$ celui correspondant
à $Z\times_W T$. Soit $Z'$ l'éclaté
de $Z$ le long de $\mathscr I\cdot \mathscr J$ et
soit $S'$ le diviseur exceptionnel correspondant. 
Comme $Z\to W$ est composé de morphismes
quasi-étales et d'éclatements, le produit fibré
$Z\times_W T$
est un diviseur 
de Cartier de $Z$ ;
par conséquent l'idéal $\mathscr I
\cdot \mathscr J$ est inversible,
si bien que $Z'$ s'identifie 
à $Z$, et $S'$ à la somme des
diviseurs de Cartier effectifs
$S$ et $Z\times_W T$ ;
nous utilisons désormais la notation $Z$
plutôt que $Z'$, mais conservons
la notation $S'$. Par
construction, $(Z,S')$
est un
$\Gamma$-enjoliveur de $X$,
de classe
majorée par $d+c+c'+1=D+1$. 
Soit $s$ un point de $S'$,
soit $w$
son image sur $W$ et soit $x$ l'image de $s$
sur $X$ (qui est aussi l'image de $w$). 
Si $w$ appartient à $T$ alors $x$
appartient à l'image de
$Y$ et $d_k(x)
<n-d$ par hypothèse ; sinon, $s$
appartient à $S$, auquel cas on 
$w$ appartient à l'image 
de $U\oslash_Z T$
et vérifie la
majoration $d_k(w)<n-d$,
si bien que $x$ 
appartient à l'image 
de $Y$ et que $d_k(x)<n-d$.
Le domaine analytique 
$V$ de $(U\oslash_WT)\times_W Z$
est la trace d'un domaine
analytique
compact et $\Gamma$-strict
$\Omega$ de $U\times_W Z$, qu'on peut
également voir comme un
domaine
analytique compact et
$\Gamma$-strict
de $Y\times_X Z$. Une composante
irréductible de $\Omega$ est contenue
dans 
l'image réciproque de $S'$ si et seulement si elle est
contenue dans $Z\times_W T$ ou dans
l'image réciproque de $S$ ; 
par conséquent, $\Omega\oslash_ZS'$
coïncide avec $V\oslash_Z S$. 
Il s'ensuit que $\Omega\oslash_Z S'
\to Y$ est surjective et que 
 $\Omega\oslash_Z S'
\to Z$ se factorise par un morphisme
plat
et surjectif sur un sous-espace
analytique fermé réduit de $Z$
purement de dimension $n-d$. 
Le $\Gamma$-enjoliveur 
$(Z,S')$ de $X$
et le domaine analytique compact
et $\Gamma$-strict $\Omega$ de
$Y\times_X Z$ satisfont les assertions 
(1), (2) et (3) relativement au morphisme $Y\to X$, qui
satisfait dès lors lui-même l'assertion (A). 

Nous pouvons maintenant en venir à la démonstration 
proprement dite.
Nous allons nous réduire
dans un premier temps
au cas relativement équidimensionnel, 
puis dans un second temps
au cas génériquement plat. 

Commençons par la réduction au cas 
où $D=d$.
Appliquons le
théorème \ref{theo-aplatissement-bis}
en prenant le triplet
$(\mathscr F,d,n)$
de son énoncé égal à 
$(\mathscr O_X, D,d+1)$.
Il assure l'existence d'un
$\Gamma$-enjoliveur $(Z,S)$
de classe
$(D-d)$ de $X$ et d'un domaine analytique compact
et $\Gamma$-strict $V$ de $Y\times_X Z$
tel que les propriétés suivantes
soient satisfaites : 

\begin{itemize}[label=$\diamond$]

\item $V\oslash_Z S \to Z$ est plat
en dimensions $\geq d+1$.  
\item L'image de
$V\oslash_Z S\to Y$ contient
$Y\setminus \qlns YX{d+1}$. 

\item 
L'image de $S$ sur $X$ est contenue
dans l'image de $Y$ et de
dimension $\leq 
n-d-1$. 
\end{itemize}
Comme $V\oslash_Z S\to Z$ est génériquement
de dimension $d$, le fait que ce
morphisme soit plat en dimensions 
$\geq d+1$ signifie simplement
qu'il est purement
de dimension relative $d$
(lemme \ref{complements-dim} (4a)).
Et il résulte également
de ce lemme
que 
$\qln YX{d+1}$ est le lieu des points 
de $Y$ en lesquels la dimension relative
de $Y\to X$ vaut $d+1$. L'image 
de $V\oslash_Z S\to Y$ contient donc
toutes les fibres de $Y\to X$ purement
de dimension $d$,
et en
particulier du
lemme au vu  du
lemme \ref{complements-dim} (4b)
tous les points de $\mathsf A(Y)$. 
Cette image est donc dense dans $Y$. 
Comme elle est compacte, c'est $Y$ tout
entier. 
Il résulte dès lors
de la seconde situation 
particulière étudiée plus haut
(en prenant $c=D-d$ et $c'=0$) 
qu'il suffit
de démontrer le théorème pour le morphisme
$V\oslash_Z S\to Z$. On s'est donc bien
ramené au cas où $D=d$, c'est-à-dire
encore à celui où le morphisme
$Y\to X$
est purement de dimension
$d$.

Nous allons maintenant nous ramener au
cas génériquement plat. 
Le morphisme $Y\to X$ est purement de dimension relative $d$. Il s'ensuit
en vertu du corollaire 4.7 de \cite{ducros2007} qu'il existe : 

\begin{itemize}[label=$\diamond$] 
\item un recouvrement fini $(X_i)$ de $X$ par des domaines 
affinoïdes $\Gamma$-stricts ; 
\item pour chaque $i$, un recouvrement fini $(Y_{ij})_j$ de 
$Y\times_X X_i$ par des domaines affinoïdes $\Gamma$-stricts ; 
\item pour chaque $(i,j)$, une factorisation de
la flèche $Y_{ij}\to X_i$
de la forme $Y_{ij}\to T_{ij}\to X_i$ où $T_{ij}$ est un
espace $k$-affinoïde
compact et $\Gamma$-strict, où $T_{ij}\to X_i$ est quasi-lisse
et purement de dimension relative $d$, et où $Y_{ij}\to T_{ij}$
est fini. 
\end{itemize}
Fixons $(i,j)$. Le morphisme $T_{ij}\to X_i$ étant quasi-lisse, il est en particulier
plat et à fibres géométriquement réduites. Le théorème 3.11
de \cite{ducros2021a} assure alors l'existence d'un espace $k$-affinoïde
et $\Gamma$-strict $X_{ij}$ muni d'un morphisme quasi-étale surjectif
$X_{ij}\to X$, et d'un recouvrement $(T_{ij\ell})_\ell$ de
$T_{ij}\times_{X_i}X_{ij}$ par des domaines analytiques compacts et
$\Gamma$-stricts tels que les fibres du morphisme $T_{ij\ell}\to X_{ij}$ soient
géométriquement connexes (et donc ici géométriquement intègres
dès qu'elles sont non vides). Pour tout $\ell$, notons $Y_{ij\ell}$
l'image réciproque de $T_{ij\ell}$ sur $Y_{ij}$, et $X_{ij\ell}$ l'image
de $T_{ij\ell}$ sur $X_{ij}$ (qui en est
par platitude un domaine
analytique compact et $\Gamma$-strict).
Récapitulons : on dispose d'une famille finie $(X_{ij\ell})$ d'espaces $k$-analytiques
compacts et $\Gamma$-stricts et, pour chacun d'eux, d'un domaine analytique compact et $\Gamma$-strict
$Y_{ij\ell}$ de $Y\times_X X_{ij\ell}$ tel que $Y_{ij\ell}\to X_{ij\ell}$
admette une factorisation
\[Y_{ij\ell}\to T_{ij\ell}\to X_{ij\ell}\] où $T_{i\ell}$ est compact et $\Gamma$-strict, 
où $Y_{ij\ell}\to T_{ij\ell}$ est fini, et où $T_{ij\ell}\to X_{ij\ell}$
est quasi-lisse à fibres géométriquement intègres de dimension $d$. Par construction, la réunion des
images des $Y_{ij\ell}\to Y$ est égale à $Y$ tout entier. 
Fixons $(i,j,\ell)$. Le morphisme $Y_{ij\ell}\to T_{ij\ell}$ étant fini, 
son image $F_{ij\ell}$ est un fermé de Zariski de $T_{ij\ell}$. Puisque $Y_{ij\ell}\to X_{ij\ell}$ est 
purement de dimension relative $d$ et puisque les fibres de $T_{ij\ell}\to X_{ij\ell}$
sont irréductibles et de dimension $d$, le fermé $F_{ij\ell}$ est une réunion de fibres de
$T_{ij\ell}\to X_{ij\ell}$. L'existence d'un idéal des coefficients pour $F_{ij\ell}$ (muni par exemple de
sa structure réduite), assurée par le théorème 3.9 de \cite{ducros2021a}, assure alors que l'image
de $F_{ij\ell}$ sur $X_{ij\ell}$ est un fermé de Zariski $\Lambda_{ij\ell}$
de ce dernier. Le morphisme $F_{ij\ell}\to \Lambda_{ij\ell}$ (la source et le but étant munis
de leurs structures réduites) est quasi-lisse et purement de dimension relative $d$, et $F_{ij\ell}$
est purement de dimension $n$ car c'est l'image de $Y_{ij\ell}$ par un morphisme fini, et
car $Y_{ij\ell}$ est lui-même purement de dimension $n$, étant quasi-étale sur $Y$ ; il s'ensuit
que $\Lambda_{ij\ell}$ est purement de dimension $n-d$. 
Posons $X'=\coprod X_{ij\ell}, \Lambda=\coprod \Lambda_{ij\ell}$
et $Y'=\coprod Y_{ij\ell}$. Tous ces espaces sont compacts et 
$\Gamma$-stricts, $X'$ est muni d'un morphisme quasi-étale
vers $X$, l'espace $Y'$ est un domaine analytique de $Y\times_X X'$
se surjectant sur $Y$, et $\Lambda$ est un fermé de Zariski de $X'$
purement de dimension $n-d$ ; de plus lorsqu'on munit $\Lambda$ de sa structure
réduite le morphisme $Y'\to X'$ se factorise par $\Lambda$ (et $Y'\to \Lambda$ 
est surjective, mais nous ne nous en servirons pas). 
\textit{Il suffit de démontrer le théorème pour le morphisme $Y'\to X'$ et partant, en vertu
de la première situation particulière
étudiée plus haut, pour le morphisme $Y'\to \Lambda$}. 
Supposons en effet qu'il existe un
$\Gamma$-enjoliveur $(Z,S)$ de $X'$ de classe $\leq d+1$
et un domaine analytique compact et $\Gamma$-strict $V$ tels que les propriétés
suivantes soient satisfaites : 

\begin{itemize}[label=$\diamond$] 
\item La flèche $V\oslash_Z S\to Y'$
est surjective. 

\item La flèche $V\oslash_Z S\to Z$ se factorise par un morphisme
plat et surjectif sur un sous-espace analytique fermé réduit de $Z$
purement de dimension $n-d$. 

\item L'image de $S$
sur $X'$ est contenue dans celle 
de $Y'$ et de dimension
strictement inférieure à $n-d$. 
\end{itemize}
Choisissons une présentation
$\Gamma$-admissible
\[((Z=Z_{2d+2}\to Z_{2d+1}\to \cdots \to Z_1\to Z_0=X'), (S_i)_{0\leq i\leq 2d+2})\]
de $(Z,S)$ sur $X'$ (avec $S_0=\emptyset$ et $S_{2d+2}=S$). En remplaçant $Z_0$ par $X$
et la flèche quasi-étale $Z_1\to X'$ par la composée $Z_1\to X'\to X$, on obtient une présentation
admissible de $(Z,S)$ sur $X$ de longueur $d+1$, et $V$ peut être vu comme un domaine 
$k$-analytique compact et $\Gamma$-strict de $Y\times_X Z$. Il est alors immédiat que 
le
$\Gamma$-enjoliveur $(Z,S)$ de $X$ et le domaine $V$ satisfont les conclusions du théorème. 
Il suffit donc bien de démontrer le théorème pour la flèche $Y'\to \Lambda$, ce qui permet de
se ramener au cas où $X$
est purement de dimension $n-d$ (la flèche $Y'\to \Lambda$ était également relativement équidimensionnelle, 
mais cela n'a plus d'intérêt à ce stade de la démonstration donc nous oublions cette hypothèse).
L'image
de $\mathsf A(Y)$ est alors contenue
dans $\mathsf A(X)$, l'ouvert
de Zariski $\pl YX$ est dense
dans $Y$, et il
contient même
l'intégralité de la fibre
de tout point de $\mathsf A(Y)$
(lemme \ref{complements-dim} (2) et (3)). 
Appliquons
le théorème
\ref{theo-aplatissement-bis}
en prenant le triplet
$(\mathscr F,d,n)$
de son énoncé égal à 
$(\mathscr O_X, D,0)$.
Il assure l'existence d'un 
$\Gamma$-enjoliveur $(Z,S)$
de classe
$(D+1)$ de $X$ et d'un domaine analytique compact
et $\Gamma$-strict $V$ de $Y\times_X Z$
tel que les propriétés suivantes
soient satisfaites (en se rappelant 
que «plat en dimensions $\geq 0$»
signifie simplement «plat») : 

\begin{enumerate}[1]

\item $V\oslash_Z S \to Z$ est plat.  
\item L'image de
$V\oslash_Z S\to Y$ contient 
$Y\setminus \ql YX^{\mathrm{sat}}$, c'est-à-dire la 
réunion des fibres de $Y\to X$ entièrement
contenues dans $\pl YX$. 
\item L'image de $V\times_Z S$ sur $Y$ est contenue dans $\ql YX^{\mathrm{sat}}$, ce qui veut dire qu'elle
ne rencontre aucune fibre de $Y\to X$
entièrement contenue dans $\pl YX$. 
\item L'image de $S$
sur $X$ est contenue dans celle 
de $Y$ et est
de dimension $\leq n$ (notons que cette dernière
égalité n'apporte en fait aucune information, car elle était \textit{a priori}
évidente puisque $\dim Y=n$). 
\end{enumerate}
Puisque $V\oslash_Z S\to Z$ est plat, son image $Z'$ sur $Z$ est un
domaine analytique compact et $\Gamma$-strict de $Z$, et l'on peut remplacer
$Z$ par $Z'$ (et $S$ par $S\cap Z'$) sans modifier les propriétés
ci-dessus, ce qui permet de supposer
que la flèche  $V\oslash_Z S\to Z$
est surjective. 
L'image de $V\oslash_Z S\to Y$ est
une partie compacte de $Y$ contenant
la réunion des fibres
entièrement incluses dans $\pl YX$ ; 
elle contient en particulier $\mathsf A(Y)$, lequel est dense dans $Y$. 
Par conséquent, 
$V\oslash_Z S\to Y$ est surjective. 
Soit  $s$ un point de $S$
et soit $x$ son image
sur $X$. Pour terminer la démonstration, 
il reste à s'assurer que $d_k(x)\leq n-d-1$. 
Comme 
$V\oslash_Z S\to Z$ est surjectif, $s$ possède un antécédent $v$ sur $V$,
dont l'image $y$ sur $Y$ est un antécédent de $x$. 
Puisque $y$ est l'image d'un élément de $V\times_X S$, 
sa fibre n'est pas entièrement contenue dans $\pl YX$ ; 
ceci exclut que $x$ appartienne à $\mathsf A(X)$
(lemme \ref{complements-dim} (3)).
Par conséquent, 
$d_k(x)$ est strictement inférieur à $m=n-d$. 
\end{proof}

\section{Un théorème de Chevalley non archimédien}

Nous nous proposons dans cette section d'élucider autant qu'il est possible la structure de l'image d'un morphisme
entre espaces analytiques compacts et $\Gamma$-stricts. 
Un passage en fin de preuve requerra l'existence d'une borne uniforme
pour le cardinal des fibres géométriques d'un certain morphisme quasi-étale, 
aussi allons-nous commencer par une proposition qui établit
l'existence d'une telle borne, 
mais dans un contexte bien plus général que celui dont nous aurons besoin ; cette proposition nous semble avoir un intérêt propre, et fournit un premier
exemple d'énoncé qui se démontre en enjolivant un morphisme. 

\begin{prop}\label{taille-fibres-bornee}
Soit $Y\to X$ un morphisme d'espaces $k$-analytiques 
et soit $\mathscr F$ un faisceau cohérent sur $Y$. 
Supposons $X$ et $Y$ compacts. 
Pour tout $x\in X$ la quantité 
\[\lambda_x(\mathscr F):=\sum_{y\in Y_x, \dim_y Y_x=0}\dim_{\hr x} \mathscr F_{Y_x, y}\]
est finie, et elle est bornée indépendamment de $x$. 
\end{prop}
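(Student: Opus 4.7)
Finiteness for fixed $x$ is immediate: the locus $E_x := \{y \in Y_x : \dim_y Y_x = 0\}$ is open in the fiber $Y_x$ and is $0$-dimensional at each of its points. A $0$-dimensional Berkovich analytic space is locally $\mathscr M(A)$ for $A$ a finite-dimensional Artin algebra, so $E_x$ is discrete; compactness of $Y_x$ (inherited from $Y$) then gives finiteness. At each $y \in E_x$ the Artinian local ring $\mathscr O_{Y_x, y}$ makes $\mathscr F_{Y_x, y}$ a finite-dimensional $\hr{x}$-vector space.

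For the uniform bound I would argue by noetherian induction on $n := \dim \mathrm{Supp}(\mathscr F)$. After replacing $Y$ by $\mathrm{Supp}(\mathscr F)$ we may assume $Y$ reduced and $\mathscr F$ of full support. A standard dévissage of $\mathscr F$ with successive quotients of the form $\mathscr O_Z$ for $Z$ an integral closed analytic subspace of $Y$ reduces the estimate to the case $\mathscr F = \mathscr O_Y$ with $Y$ integral of dimension $n$. Let $d$ be the generic relative dimension of $f$. If $d \geq 1$, upper semi-continuity of $y \mapsto \dim_y f$ together with the irreducibility of $Y$ forces $\dim_y f \geq 1$ everywhere, so $\lambda_x \equiv 0$. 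Thus one may assume $d = 0$, i.e.\ $f$ is generically quasi-finite.

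Apply Theorem \ref{theo-embellissement}: it yields a $\Gamma$-enjoliveur $(Z,S)$ of $X$, a compact $\Gamma$-strict domain $V \subset Y \times_X Z$, and a reduced closed analytic subspace $F \subset Z$ pure of dimension $n$, such that $V' := V \oslash_Z S \to Y$ is surjective, the induced $V' \to F$ is flat and surjective of relative dimension $0$ (hence finite flat between compact spaces), and the image $\Sigma$ of $S$ in $X$ has dimension $< n$. Finite flatness between compact spaces furnishes a uniform integer $D$ with $\sum_{v \in V'_z} \dim_{\hr z} \mathscr O_{V'_z, v} \leq D$ for every $z \in F$.

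The conclusion then splits into two cases. For $x \in \Sigma$, restrict $f$ to $Y'' := f^{-1}(\Sigma)$, a compact subspace of $Y$ of dimension $\leq \dim \Sigma + 0 < n$ (by Lemma \ref{lem-boorne-dimension} applied at points where $\dim_y f = 0$, combined with the fact that the Zariski-closed locus $\{y \in Y : \dim_y f \geq 1\}$ has dimension $< n$ by irreducibility): the induction hypothesis applied to $Y'' \to X$ bounds $\lambda_x$. For $x \notin \Sigma$, one uses the surjection $V'_x \to Y_x$ and the identification $V'_x \simeq V' \times_F F_x$ (as $V' \to X$ factors through $F$) to bound $\lambda_x(\mathscr O_Y)$ by $D$ times the analogous invariant for $F \to X$. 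The hard part will be this last step: first, obtaining a uniform bound for $F \to X$ is formally of the same type as the problem being proved, but it is resolved by exploiting the special structure of $F \hookrightarrow Z$ inside the embellisher (a tower of blow-ups and quasi-étale morphisms over $X$), whose fibres are controlled directly from the construction; second, the transfer of multiplicities along the generically quasi-étale (but not finite) surjection $V' \to Y$ requires careful use of the purity of $V'$ and of the quasi-étale behaviour of $V \oslash_Z S$ at points of $\mathsf A$, recalled in \ref{sss-dimension-transfostricte}, to ensure that isolated $0$-dimensional points of $Y_x$ lift compatibly to isolated $0$-dimensional points of $V'_x$.
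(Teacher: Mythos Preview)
Your overall architecture (dévissage to $\mathscr F=\mathscr O_Y$ with $Y$ integral, reduction to generic relative dimension $0$, use of Theorem~\ref{theo-embellissement}) is exactly the paper's, but two of your steps do not go through as written.

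\medskip
\textbf{(a) The ``hence finite flat'' claim is false.} Flat, surjective, of relative dimension~$0$ between compact analytic spaces does \emph{not} imply finite: take $X$ irreducible compact and $V\subsetneq X$ a compact affinoid domain, and map $X\coprod V\to X$ by $(\mathrm{id},\mathrm{incl})$. So you cannot read off the bound $D$ from finite-flatness. What you actually need is the \emph{quasi-finite} case of the proposition itself (i.e.\ a uniform bound for $\lambda_z(\mathscr O_{V'})$ along a quasi-finite map $V'\to F$), and this has to be proved first. The paper does precisely that: it establishes the quasi-finite case by the analytic Zariski Main Theorem (\cite{ducros2007}, Th.~3.2), factoring locally as a domain in a finite \'etale cover, and then invokes this case to produce your~$D$. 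Since in your induction $V'$ and $F$ both have dimension~$n$, you cannot get this bound from the inductive hypothesis either; the quasi-finite case is a genuine separate input.

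\medskip
\textbf{(b) The induction on $\dim Y$ breaks for $x\in\Sigma$.} The image $\Sigma=g(S)\subset X$ is merely a compact subset, not a closed analytic subspace, so $Y'':=f^{-1}(\Sigma)$ is not an analytic space and the proposition cannot be applied to $Y''\to X$. (Your dimension estimate $d_k(y)<n$ for $y\in Y''$ with $\dim_y f=0$ is correct, but irrelevant without an analytic structure on~$Y''$.) The paper avoids this by inducting on $\dim X$ rather than on $\dim Y$: the Cartier divisor $S\subset Z$ \emph{is} an honest analytic space, purely of dimension $\dim X-1$, and for any $s\in S$ with image $x$ one has $\lambda_x(\mathscr O_Y)=\lambda_s(\mathscr O_{Y\times_X Z})$ (base change of an Artinian local ring along $\hr x\hookrightarrow\hr s$), so the inductive hypothesis applied to $(Y\times_X Z)|_S\to S$ gives the bound over~$\Sigma$.

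\medskip
For $x\notin\Sigma$ your sketch is on the right track but understates the work: the paper first bounds the quasi-\'etale fibres of $Z\setminus S\to X\setminus\Sigma$ by an integer $d$ (using the quasi-finite case along the tower defining the enjoliveur), then combines this with the quasi-finite bound $N_2$ for $V'\to Z$ and a careful multiplicity computation through composita of residue fields to get $\lambda_x(\mathscr O_Y)\le d^2 N_2$.
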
 

\begin{rema}
La proposition analogue pour un morphisme de type fini entre schémas noethériens est vraie, et se démontre facilement
à l'aide du \textit{Main Theorem} de Zariski. Celui-ci possède une déclinaison analytique \cite[th. 3.2]{ducros2007}, 
mais on ne peut pas s'en servir directement ici, car le lieu des points en lesquels $Y$ est quasi-fini sur $X$
est un ouvert de $Y$ qui n'est en général pas compact (alors que dans le monde schématique, il est quasi-compact). 
\end{rema}

\begin{proof}[Démonstration de la proposition \ref{taille-fibres-bornee}]
On procède en plusieurs étapes. Notons qu'en raison de la compacité des
espaces en jeu, l'assertion est G-locale sur $X$ comme sur $Y$.

On suppose tout d'abord que $Y\to X$ est quasi-fini, c'est-à-dire
purement de dimension nulle. 
En raisonnant G-localement à la source et au but
on peut supposer $X$ et $Y$ affinoïdes puis, en raison de la version analytique  \textit{Main Theorem}
de Zariski \cite[th. 3.2]{ducros2007}, que $Y\to X$ admet
une factorisation $Y\to T\to X$ où $T$ est affinoïde, où $\pi \colon Y\to T$ est fini, et où $T\to X$ est quasi-étale. 
On a pour tout $x$
appartenant à $X$ l'égalité $\lambda_x(\mathscr F)=\lambda_x(\pi_*
\mathscr F)$, et il suffit donc de montrer
que cette dernière quantité est bornée indépendamment de $x$. En considérant une surjection 
de $\mathscr O(T)^N$ sur $\mathscr F(Y)$ (pour $N$ convenable), on voit qu'il suffit de montrer que
$\lambda_x(\mathscr O_T)$ est borné
indépendamment de $x$. Comme c'est une propriété locale sur $T$, on peut supposer que $T$ est un domaine affinoïde
d'un espace affinoïde $S$ fini et étale sur un domaine affinoïde $X'$ de $X$, et il suffit de trouver une borne 
pour $\lambda_x(\mathscr O_S)$ indépendante du point $x\in X'$ ; on peut alors
prendre le cardinal de n'importe quelle famille génératrice
finie du $\mathscr O(X')$-module de type fini $\mathscr O(S)$. Ceci termine
la preuve dans le cas quasi-fini.

Passons au cas général. 
L'assertion à démontrer ne mettant en jeu que les fibres de $Y\to X$, on peut supposer
$X$ réduit, et également irréductible en raisonnant composante par composante. 
On procède alors par récurrence sur la dimension $m$ de $X$. 
Si $m=0$ l'espace $X$ ne compte qu'un point, et $Y\to X$ a donc une seule fibre, qui est compacte
et ne compte par conséquent qu'un nombre fini de points isolés, et \textit{a fortiori} qu'un nombre fini
de points rigides isolés ; la proposition est alors évidente. 
On suppose désormais $m>0$ et le résultat vrai en dimension $m-1$. 
En raisonnant G-localement sur $Y$,  on peut le supposer affinoïde ;
soit $B$ l'anneau de ses fonctions analytiques.  Le faisceau $\mathscr F$
est donné par un $B$-module de type fini $M$, qui par noethérianité de $B$ admet
une filtration finie dont les quotients successifs sont de la forme $B/\mathfrak p_1,\ldots, B/\mathfrak p_n$ où les
$\mathfrak p_i$ sont des idéaux premiers. On peut donc supposer que $M$ est lui-même de la forme
$B/\mathfrak p$ avec $\mathfrak p$ premier. Si $Z$ désigne le sous-espace analytique fermé
$\mathscr M(B/\mathfrak p)$ alors pour tout $y\in Z$ on a 
$\dim_y {f|Z}\leq \dim_y f$, si bien qu'il suffit de majorer les sommes étudiées en remplaçant $Y$
par $Z$. Autrement dit, on s'est ramené au cas où $Y$ est intègre et où $\mathscr F=\mathscr O_Y$. 
Si la dimension générique de $Y\to X$ est strictement positive les sommes étudiées sont toutes nulles ; 
on peut donc supposer que la dimension générique de $Y\to X$ est nulle. 
Le théorème \ref{theo-embellissement} assure l'existence d'un enjoliveur $(Z,S)$ sur $X$ et d'un domaine analytique
compact $V$ de $Y\times_X Z$ tel que $V\oslash_Z S\to Y$ soit surjective et telle que $V\oslash_Z S\to Z$ se factorise
par une surjection plate et quasi-finie $Z\to F$ où $F$ est un sous-espace analytique fermé réduit de $Z$ purement 
de dimension $\dim Y$. 
Notons $g$ le morphisme $Z\to X$. 
En considérant une décomposition admissible 
\[(Z_{2r}\to Z_{2r-1}\to \cdots \to Z_0, (S_i))\] de $(Z,S)$ et en appliquant 
le cas quasi-fini déjà traité aux morphismes quasi-étales 
$Z_{2i+1}\to Z_{2i}$ pour $i$ compris entre $0$
et $n-1$, on obtient 
l'existence d'un entier $d$
tel que pour tout $x\in X\setminus g(S)$, \[\sum_{z\in Z,g(z)=x}
[\hr z:\hr x]\leq d\]
(notons que comme $x\notin g(S)$, la fibre $g^{-1}(x)$
est contenue dans le lieu quasi-étale de $Z$ sur $X$, et on sait
donc \textit{a priori} qu'elle consiste en un nombre fini de points $z$
tels que $\hr z$ soit fini séparable sur $\hr x$). 
Le diviseur de Cartier $S$ de $Z$ est purement de dimension $m-1$. 
En appliquant l'hypothèse de récurrence
à chacune de ses composantes irréductibles réduites on obtient l'existence d'un 
entier $N_1$ tel que $\lambda_s (\mathscr O_{Y\times_XZ})\leq N_1$ pour tout $s\in S$. 
Si $x$ est un point de $X$ possédant un antécédent $s$ sur $S$
on a donc 
\[\lambda_x(\mathscr O_Y)=\lambda_s(\mathscr O_{Y\times_X Z})\leq N_1.\]
La flèche $V\oslash_Z S\to Z$ est quasi-finie, et puisque la proposition a été établie dans ce cas, 
il existe $N_2$
tel que $\lambda_z(\mathscr O_{V\oslash_Z S})\leq N_2$ pour tout $z$
dans $Z$. 
Soit $x$ un point de $X$ n'ayant 
aucun antécédent sur $S$. 
La fibre de $Z$ en $x$ 
consiste comme
on l'a vu plus haut en un ensemble fini de points
$z_1,\ldots, z_r$ tels que $\hr {z_i}$ soit fini séparable sur
$\hr x$ pour tout $i$ et tels que $\sum_i [\hr{z_i}:
\hr x]\leq d$. En particulier, $r$ est inférieure ou
égal à $d$, si bien que
$\sum_i \lambda_{z_i}(\mathscr O_{V\oslash_Z S})\leq dN_2$.
Soit $y$ un point de $Y_x$ en lequel
$Y_x$ est de dimension nulle. 
Par surjectivité de $V\oslash_ZS\to Y$, il existe un antécédent $v$ de $y$ sur
$V\oslash_Z S$. Le point $v$ est situé au-dessus de l'un des $z_i$, 
et n'appartient donc pas à $S$ ; il en résulte que 
$\mathscr O_{(V\oslash_Z S)_{z_i},v}=\mathscr O_{V_{z_i}, v}$, et ce dernier coïncide avec 
$\mathscr O_{(Y\times_X Z)_{z_i},v}$ car $(Y\times_X Z)_{z_i}$ est de dimension 
nulle en $v$. Par conséquent, $\mathscr O_{(V\oslash_Z S)_{z_i}, v}$
est un sommande de $\mathscr O_{Y_x,y}\otimes_{\hr x}\hr{z_i}$. 
Si $E$ désigne la fermeture séparable de $\hr x$ dans l'anneau 
$\mathscr O_{Y_x,y}$ il existe donc un composé $F$ de $\hr{z_i}$ et $E$ sur $\hr x$ tel que
$\mathscr O_{V\oslash_Z S)_{z_i}, v}$ soit égale à 
$\mathscr O_{Y_x,y}\otimes_EF$. 
Il vient 

\begin{eqnarray*}
\dim_{\hr {z_i}}\mathscr O_{
(V\oslash_ZS)_{z_i},v}&=&(\dim_E \mathscr O_{Y_x,y})[F:\hr{z_i}]\\
&=&(\dim_{\hr x}\mathscr O_{Y_x,y})\frac{[F:\hr{z_i}]}{[E:\hr x]}.
\end{eqnarray*}
Comme on a par ailleurs 
\[\frac{[F:\hr{z_i}]}{[E:\hr x]}=
\frac{[F:\hr x]\cdot[F:E]}
{[F:\hr x]\cdot[\hr{z_i}:\hr x]}
\geq \frac 1{[\hr{z_i}:\hr x]}\geq \frac 1{d}\]
on voit que finalement 
$\dim_{\hr {z_i}}\mathscr O_{
(V\oslash_ZS)_{z_i},v}\geq \frac{\dim_{\hr x}\mathscr O_{Y_x,y}}d.$
Ceci vaut
pour tout point 
$y$ de la fibre $Y_x$ en laquelle celle-ci est de dimension nulle. 
Par conséquent, 
$\sum_i \lambda_{z_i}(\mathscr O_{V\oslash_Z S})
\geq 
\frac{\lambda_x(\mathscr O_Y)}d$, et puisqu'on a vu plus haut que
$\sum_i \lambda_{z_i}(\mathscr O_{V\oslash_Z S})\leq dN_2$, on obtient
la majoration
$\lambda_x(\mathscr O_Y)\leq d^2N_2$. 

\textit{Conclusion.}
Soit $x$ un point de $X$. On a vu que si $x$
appartient à l'image de $S$
alors $\lambda_x(\mathscr O_Y)\leq N_1$
et que $\lambda_x(\mathscr O_Y)\leq 
d^2N_2$ dans le cas contraire. 
On a donc 
$\lambda_x(\mathscr O_Y)\leq\max(N_1,d^2N_2)$, ce qui achève la
démonstration.
 \end{proof}


Nous pouvons maintenant aborder l'étude
des images de morphismes entre espaces $k$-analytiques
compacts et $\Gamma$-stricts. Nous allons 
commencer par le cas le plus simple, auquel le cas général va 
se ramener ; il concerne la composée d'un morphisme quasi-étale et d'une immersion fermée.

\begin{theo}\label{theo-casqe}
Soit $f\colon Y\to X$ un morphisme compact et quasi-étale
entre espaces $k$-analytiques $\Gamma$-stricts, et
soit $Z$ un fermé de Zariski de
$Y$. Pour tout entier $d$, notons $X_d$ l'ensemble des points 
de $X$ dont la fibre géométrique sous $f$ est au moins
de cardinal $d$.

\begin{enumerate}[1]
\item Pour tout $d$, l'ensemble $X_d$ est un domaine analytique
$\Gamma$-strict et fermé
de $X$. 

\item Pour tout $d$, l'intersection $f(Z)\cap(X_d\setminus X_{d+1})$
est un fermé de Zariski de $X_d\setminus X_{d+1}$.

\item Pour tout $x\in f(Z)$ et tout domaine analytique 
$V$ de $X$
contenant $x$ on a les égalités
\[\dim_x (f(Z)\cap V)=\dim_x f(Z)=\max_{z\in Z,f(z)=x} \dim_z Z.\]

\end{enumerate}

\end{theo}

\begin{proof}
Montrons tout d'abord (1) et (2). 
Ces énoncés sont G-locaux sur $X$, ce qui permet de supposer $X$ et $Y$
compacts. Le cardinal des fibres
géométriques de $f$ est alors uniformément borné par un certain entier 
$n$, et l'on raisonne par récurrence sur $n$. 
Si $n=0$ alors $Y=\emptyset$ et $Z=\emptyset$, et l'on a
$X_d=X$ si $d=0$ et $X_d=\emptyset$ sinon, et le
théorème est évident. 
Supposons maintenant $n>0$ et le théorème vrai pour les entiers $<n$. 
Notons $p$ et $q$ les deux projections de $Y\times_X Y$ sur $Y$ ; 
nous considérons $p\colon Y\times_X Y\to Y$ comme le changement de base
de $f$ \textit{via} lui-même. Notons $T$ le saturé de $Z$ sous
la relation d'équivalence induite par $f$ ; on a les égalités
$f(T)=f(Z)$ et $T=p(q^{-1}(Z))$. Pour tout $d$, on pose
$Y_d=f^{-1}(X_d)$ ; c'est exactement l'ensemble des points de $Y$
dont la fibre géométrique sous $p$ est de cardinal au moins égal à $d$. 
Puisque $f$ est quasi-étale, la diagonale $\delta \colon Y
\to Y\times_X Y$ identifie $Y$ à un ouvert fermé de $Y\times_X Y$. 
Autrement dit l'on dispose
d'un isomorphisme $Y\times_X Y\simeq Y\coprod
Y'$ modulo lequel $p|_Y$ et $q|_Y$ sont toutes
deux égales à $\mathrm{Id}_Y$. Si l'on pose $Z'=q^{-1}(Z)\cap Y'$
alors $T=Z\cup p(Z')$. 
Par construction $Y_0=Y_1=Y$
et 
$Y_d$ est pour tout $d\geq 1$ l'ensemble des points de $Y$ en lesquels
la fibre géométrique de $p|_{Y'}$ est de cardinal au moins
$d-1$. Par ailleurs, le morphisme
$p|_{Y'}\colon Y'\to Y$ est quasi-étale et a des fibres géométriques
de cardinal inférieur ou égal à $n-1$. 
On en déduit que pour tout $d$, le sous-ensemble $Y_d$ de $Y$ en est un domaine
analytique compact et $\Gamma$-strict (par hypothèse de récurrence si
$d\geq 1$, et directement si $d=0$), et que l'intersection $p(Z')\cap (Y_d\setminus Y_{d+1})$
est un fermé de Zariski de $Y_d\setminus Y_{d+1}$
(par hypothèse de récurrence si $d\geq 1$, et directement
si $d=0$). 
Il s'ensuit comme $f$ est quasi-étale et compact 
que $X_d=f(Y_d)$ est pour tout $d$ un domaine analytique 
compact et $\Gamma$-strict de $X$ (cela vaudrait en général si $f$
était simplement supposé plat, mais le cas particulier de la dimension
relative nulle est beaucoup plus simple,
\textit{cf.} \cite[prop. 9.1.1]{ducros2018}), 
ce qui montre (1).

L'intersection
$f(Z)\cap (X_0\setminus X_1)$ est vide
puisque $X_1=f(Y)$, et c'est en particulier
un fermé de Zariski
de $X_0\setminus X_1$. 
Soit maintenant $d$ un entier 
supérieur ou égal à $1$.
Posons $V=Y_d\setminus Y_{d+1}$
et $U=f(V)$.
Comme $X_d$ et $X_{d+1}$
sont contenus dans $X_1= f(Y)$, 
on a $U=X_d\setminus X_{d+1}$
et $V$ est égal à $f^{-1}(U)$.
Puisque $T$ est égal à $Z\cup p(Z')$
et que
$p(Z')\cap V$ est un fermé de Zariski de $V$, 
l'intersection
$T\cap V$
est 
un fermé de Zariski de $V$.
Le faisceau cohérent d'idéaux
$\mathscr J$ de $V$ définissant 
la structure réduite de $T\cap V$ est muni
de données de descente
naturelles relative au morphisme
$V\to U$ 
(cela provient du fait que $T\cap V=f^{-1}(Z\cap U)$
ensemblistement, et
que les morphismes quasi-étales préservent le caractère réduit). 
Par descente des faisceaux cohérents sous les morphismes
compacts et fidèlement plats \cite[th. 3.3]{ducros2021a}, 
le faisceau $\mathscr J$ provient d'un faisceau
cohérent d'idéaux $\mathscr I$ sur $U$.
Il s'ensuit que
$f(Z)\cap U$ est un fermé de Zariski de $U$ (de structure réduite définie
par $\mathscr I$), ce qui montre (2).

Montrons maintenant (3). 
Soit $x\in f(Z)$,
et soit $V$ un domaine analytique de $X$
contenant $x$. Posons
$W=f^{-1}(V)$ ; on a alors
$f(Z)\cap V=f(Z\cap W)$. 
Soient $z_1,\ldots, z_r$ les antécédents de $x$ sur
$Z$ et soit $U$ un voisinage ouvert de $x$ dans $f(Z)$, 
suffisamment petit pour que l'image
réciproque $U'$ de $U$ sur $Z$ s'écrive comme
une union disjointe $\coprod U'_i$ où $U'_i$
est pour tout $i$ un voisinage de $z_i$
de dimension $\dim_{z_i}Z$
(c'est possible par propreté topologique). 
Le morphisme
$f$ étant de dimension relative nulle, 
on a 
\begin{eqnarray*}
\dim U&=&\sup_{t\in U}d_k(t)\\
&=&
\sup_{z\in U'}d_k(f(z))\\
&=&\sup_{z\in U'}
d_k(z)\\
&=&\dim U'\\
&=&\max_i \dim U'_i\\
&=&\max_i\dim_{z_i}
Z\end{eqnarray*}

et 
\begin{eqnarray*}
\dim (U\cap V)&=& \sup_{t\in U\cap V}d_k(t)\\
&=&
\sup_{z\in U'\cap W}d_k(f(z))\\
&=&\sup_{z\in U'\cap W}
d_k(z)\\
&=&\dim (U'\cap W)
\\
&=&\max_i \dim (U'_i\cap W)\\
&=&\max_i\dim_{z_i}
Z.
\end{eqnarray*}
Ceci vaut pour tout $U$ suffisamment petit. En conséquence on a 
\[\dim_x f(Z)=\max _i\dim_{z_i}Z\;\text{et}\;
\dim_x (f(Z)\cap V)=
\max _i\dim_{z_i}Z.\qedhere\]
\end{proof}

\begin{defi}\label{def-cellule}
Soit
$X$ un espace $k$-analytique
et $\Gamma$-strict et soit $C$
une partie localement fermée de $X$ ; on note
$\partial C$ le complémentaire de
$C$ dans son adhérence $\overline C$ (dont $C$ est un ouvert). 
Soit $n$ un entier. Nous allons définir
récursivement sur $n$ le fait pour $C$
d'être une \textit{$n$-cellule}
$\Gamma$-stricte.  

\begin{itemize}[label=$\diamond$]
\item $C$ est une $0$-cellule
$\Gamma$-stricte si et seulement si $C$
est un fermé de Zariski de $X$ non vide et de dimension nulle. 

\item Si $n>0$ alors $C$ est une $n$-cellule
$\Gamma$-stricte si et seulement si
les deux conditions suivantes sont satisfaites : 
\begin{itemize}[label=$\bullet$] 
\item il existe un espace $k$-analytique
$\Gamma$-strict $Y$, un morphisme quasi-étale compact 
$f\colon Y \to X\setminus \partial C$ dont le cardinal 
des fibres géométriques est borné, et un fermé de Zariski
$Z$ non vide de $Y$ purement de dimension $n$ tel que
$C=f(Z)$ ; 
\item le bord $\partial C$ s'écrit
comme une union finie $\bigcup \overline{C_i}$ où chaque $C_i$ est
une $n_i$-cellule
$\Gamma$-stricte pour un certain entier $n_i<d$.
\end{itemize}

\end{itemize}
\end{defi}

\begin{enonce}[remark]{Commentaires}
Soit $C$ une $n$-cellule $\Gamma$-stricte et soient
$Y, f$ et $Z$ 
comme dans la définition ci-dessus. Il résulte 
du théorème \ref{theo-casqe}, et du fait que le cardinal
des fibres géométriques de $f$ est borné, qu'il existe une suite 
décroissante $(X_d)_{d\geq 0}$ de domaines analytiques 
fermés et $\Gamma$-stricts de $X\setminus \partial C$ 
telle que $X_0=X\setminus \partial C$ et $X_d=\emptyset$ pour
$d$ assez grand
et telle que $C\cap(X_d\setminus X_{d+1})$ soit 
pour tout $d$ un fermé de Zariski de $X_d\setminus X_{d+1}$ ; 
et il résulte également de \textit{loc. cit.}
que $\dim_x C=n$ pour tout $x\in C$. L'entier $n$ est donc uniquement
déterminé par $C$ : c'est sa dimension, et c'est aussi la dimension
de $\overline C$, et même la dimension locale
$\dim_x \overline C$ en tout $x$ de $\overline C$. 
\end{enonce}

\begin{rema}
Si $D$ est un fermé de $X$ s'écrivant $\overline C$ pour une certaine $n$-cellule
$\Gamma$-stricte $C$, l'entier $n$
est uniquement déterminé (on a vu que c'est la dimension de $D$), 
mais la cellule $C$ ne l'est pas en général.  En effet, supposons par exemple que $X$ est le disque unité fermé. Alors $X$
peut lui-même être vu comme une $1$-cellule
$\Gamma$-stricte de bord vide, mais $C:=X\setminus \{0\}$
est une autre $1$-cellule $\Gamma$-stricte,
de bord cette fois-ci égal à $\{0\}$, dont l'adhérence est égale à $X$.

Peut-on espérer en général que si $D$ est une partie de $X$ égale à l'adhérence d'une $n$-cellule
$\Gamma$-stricte il existe 
une plus grande $n$-cellule $\Gamma$-stricte $C$ telle que $D=\overline C$ ? Nous l'ignorons.

\end{rema}

\begin{exem}
Posons 
\[X=\mathscr M(k\{T_1,T_2\})\;\text{et}\;
Y=\mathscr M(k\{T_1,T_2,T_3\}/(T_3T_2-T_1)),\]
et soit
$f\colon Y\to X$ la flèche évidente. 
Un calcul direct montre que $f(Y)$ est le fermé
de $X$ défini par l'inégalité $\abs{T_1}
\leq \abs{T_2}$ ; soit $C$ son intersection avec
le complémentaire $U$ de l'origine dans $X$. 
On a alors $f(Y)=\overline C$ et $\partial C=\{(0,0)\}$. 
De plus, soit $x$ un point de $U$. 
Si $\abs{T_2(x)}=0$ alors $x$ n'est pas situé sur $C$, et possède 
donc un voisinage ouvert dans $U$ qui ne rencontre pas $C$. 
Et si $\abs{T_2(x)}\neq 0$, le lieu d'inversibilité $U'$ de $T_2$
sur $U$ est un voisinage ouvert de $x$ dans $U$, et $C\cap U'$ est le domaine
analytique fermé de $U'$ défini par l'inégalité $\abs{T_1/T_2}\leq 1$. 
Comme être un domaine analytique fermé est une propriété locale, on voit que 
$C$ est un domaine analytique fermé de $U$. Par conséquent, $C$ est une $2$-cellule. 
\end{exem}

Nous pouvons maintenant énoncer notre avatar non archimédien du théorème de
Chevalley, dont l'exemple ci-dessus sera \textit{a posteriori} une illustration
très simple.

\begin{theo}\label{theo-chevalley-na}
Soit $f\colon Y\to X$ un morphisme entre espaces
$k$-analytiques compacts
et $\Gamma$-stricts. On
suppose que $Y$ est non vide et équidimensionnel,
et l'on note $n$ sa
dimension ; 
on suppose aussi
qu'il existe un entier 
$d$ tel que $f$
soit génériquement de dimension
$d$. 

\begin{enumerate}[1]
\item 
Il existe une $(n-d)$-cellule
$\Gamma$-stricte $C$ de $X$ tel que 
$f(Y)=\overline C$. 
\item On a $f(\mathsf A(Y))\subset C$. 

\end{enumerate}

\end{theo}

\begin{proof}
Remarquons pour commencer 
que $d_k(f(y))=n-d$
pour tout $y\in \mathsf A(Y)$
(lemme \ref{complements-dim} (1)) ; 
par conséquent, si (1) est vraie et si
$y$ est un point de $Y$, on ne
peut avoir $f(y)\in \partial C$, 
si bien que $f(y)\in C$, d'où (2). 

Il reste donc à démontrer (1), 
ce que l'on fait
par récurrence forte
sur la dimension $n-d$
de l'image $f(Y)$. Nous supposons
donc le résultat vrai en dimensions $<n-d$. 
L'énoncé du théorème est insensible
aux nilpotents, ce qui permet de
remplacer les espaces
en jeu par les espaces réduits
associés, c'est-à-dire
de supposer $X$ et $Y$
réduits.  
Le théorème \ref{theo-embellissement} assure
alors
l'existence d'un enjoliveur
$\Gamma$-strict $(Z,S)$ sur $X$
et d'un domaine
analytique compact et $\Gamma$-strict $V$
 de $Y\times_X Z$ 
tels que les propriétés suivantes soient satisfaites : 

\begin{enumerate}[a]

\item La flèche $V\oslash_Z S\to Y$
est surjective. 

\item La flèche $V\oslash_Z S\to Z$ se factorise par un morphisme
plat et surjectif sur un sous-espace analytique fermé réduit $F$ de $Z$
purement de dimension $n-d$. 

\item L'image de $S$
sur $X$ est contenue dans celle 
de $Y$ et de
dimension $<n-d$. 
\end{enumerate}
Il assure aussi que $S\cap F$ est d'intérieur vide dans $F$. 
Soit $g$ le morphisme
de $Z$ vers $X$. 
Puisque $V\oslash_S Z\to Y$ est surjective, $f(Y)$ est égal à l'image de  $V\oslash_Z S\to X$, 
c'est-à-dire encore à $g(F)$. Posons $\Sigma=g^{-1}(g(S))$ ; c'est un fermé de $Z$ et 
l'on a 
\[f(Y)=g(F)=g(F\setminus \Sigma)\coprod g(F\cap \Sigma)=g(F\setminus \Sigma)\coprod g(S),\]
où la dernière égalité provient du fait que $g(S)\subset f(Y)$. 
Si $z$ est un point de $\mathsf A(F)$ il n'est pas situé sur $S$, si bien que $Z\to X$ est quasi-étale en $z$ ; 
en particulier $d_k(g(z))=d_k(z)=n-d$, ce qui à l'aide de (c) assure que $g(z)\notin g(S)$ ; par conséquent, 
$\Sigma$ ne rencontre pas $\mathsf A(F)$. Le morphisme $Z\setminus \Sigma \to X\setminus g(S)$ se déduit de $Z\to X$ par
changement de base le long de l'immersion ouverte $(X\setminus g(S))\hookrightarrow X$ ;
par conséquent, il est compact, et il est quasi-étale puisque $S\subset \Sigma$, et le cardinal 
de ses fibres géométriques est uniformément borné 
(cela découle de la proposition \ref{taille-fibres-bornee} mais en est en fait un cas
particulier
bien plus facile établi au cours de sa preuve). 
Posons $C=g(F\setminus\Sigma)$. 
Si $z\in \mathsf A(F)$ on a vu plus haut que $z$ n'appartient pas à $\Sigma$, et $g(z)$ appartient donc à $C$. Ceci entraîne, 
$\mathsf A(F)$ étant dense dans $F$, que $C$ est dense dans $f(Y)=g(F)$.
Il suffit dès lors pour conclure de démontrer que $C$
est une $(n-d)$-cellule ; au vu de sa définition, le seul point qui reste à 
vérifier est
qu'on peut écrire $\partial C$ comme 
une union finie d'adhérences de
cellules $\Gamma$-strictes
de dimension strictement inférieure à 
$n-d$.

Puisque $C$ est dense dans $g(F)$, 
on a $\partial C=g(S)$. 
Pour tout couple
$e=(e_1,e_2)$ d'entiers
avec $e_2\leq e_1$ notons
$S_e$
la réunion des composantes
irréductibles de $S$ de dimension 
$e_1$ dont la
dimension générique relative sur $X$ est égale à $e_2$. 
Si $e$ est tel que $S_e$ soit non vide on
a d'après (c) l'inégalité
$\dim g(S_e)<n-d$, 
et notre hypothèse de récurrence
assure alors que $g(S_e)$ est de la forme $\overline D_e$ où $D_e$ est une 
cellule
$\Gamma$-stricte
de dimension $\dim g(S_e)$. 
Si $E$ désigne l'ensemble (fini) des
indices $e$ tels que $S_e\neq \emptyset$ on a
$S=\bigcup_{e\in E}S_e$. Il vient 
$\partial C=\bigcup_{e\in E}\overline {D_e}$.
\end{proof}

\begin{rema}
Le théorème \ref{theo-chevalley-na}
ci-dessus décrit (partiellement) la structure de l'image d'un morphisme $f$
entre espaces analytiques compacts et $\Gamma$-stricts
sous des hypothèses minimales (on demande simplement que la source
de $f$ soit équidimensionnelle
et non vide et que $f$ ait une dimension générique). 
Mais en dévissant sa preuve on s'aperçoit que le cas particulier où $f$ est plat, déjà connu, est utilisé
de manière cruciale. En effet, dans l'assertion (b) du début de preuve, 
c'est-à-dire dans l'assertion (A2) du théorème \ref{theo-embellissement}, la surjectivité
découle \emph{in fine}
du fait qu'un morphisme plat entre espaces compacts et $\Gamma$-stricts
a pour image un domaine analytique $\Gamma$-strict du but
(voir le traitement du cas génériquement plat lors de la preuve du théorème
\ref{theo-embellissement}). 

La situation est donc renversée par rapport à celle du monde schématique : dans ce dernier
on démontre en effet que l'image d'un morphisme plat et de présentation finie est ouverte en utilisant 
le fait que cette image est stable par générisation (ici intervient la platitude) et qu'elle est constructible 
\emph{par le théorème de Chevalley}. On voit donc qu'en géométrie algébrique chercher à montrer (à l'instar de ce que nous faisons ici) 
le théorème de Chevalley par une réduction au cas plat \emph{via} les techniques de Raynaud et Gruson conduirait à un raisonnement 
circulaire. 
\end{rema}

\begin{rema}
Lorsque $\Gamma=\{1\}$, la description de l'image d'un morphisme plat entre espaces analytiques compacts 
et $\Gamma$-stricts est due à Raynaud, \emph{cf.} \cite[cor. 5.11]{frg2}, avec une
preuve reposant sur des techniques de géométrie formelle.
Le théorème 
\ref{theo-chevalley-na} pourrait-il lui aussi, lorsque $\Gamma=\{1\}$, être démontré directement (sans
passer par nos théorèmes d'aplatissement analytiques) par des méthodes formelles ? 
Cela nous semble assez peu probable, pour la raison suivante : l'un des points clef de l'approche de Raynaud est que dans le cas d'un morphisme
\emph{plat} $\mathfrak Y\to \mathfrak X$ entre schémas formels, l'image du morphisme analytique induit $\mathfrak Y_\eta\to \mathfrak X_\eta$ 
coïncide avec le tube de l'image du morphisme schématique induit $\mathfrak Y_{\mathrm s}\to \mathfrak X_{\mathrm s}$
(et comme cette dernière image est un ouvert de Zariski de $\mathfrak X_{\mathrm s}$, son tube est un
domaine strictement analytique compact de $\mathfrak X_\eta$). 
Mais ceci est grossièrement faux en général
sans hypothèse de platitude, et il n'y a donc à notre connaissance aucun moyen de construire
un modèle formel $\mathfrak Y\to \mathfrak X$ d'un morphisme quelconque $Y\to X$ entre espaces strictement $k$-analytiques compacts
tel que l'image de $Y\to X$ puisse être décrite en termes du morphisme de schémas $\mathfrak Y_{\mathrm s}\to \mathfrak X_{\mathrm s}$ . 

\end{rema}

\section{Dimension centrale et anneau local d'un filtre affinoïde}

Nous nous servirons dans ce qui suit de la théorie de la réduction 
des germes (ponctuels) d'espaces $k$-analytiques, et plus précisément
de sa variante $\Gamma$-stricte ; nous renvoyons le lecteur au chapitre 3 
de \cite{ducros2018} (et spécialement aux sections 3.4 et 3.5) pour les définitions, 
notations et propriétés de base. Cette théorie repose sur l'algèbre commutative graduée 
décrite dans \cite{temkin2004}, au sujet de laquelle on pourra aussi consulter
l'appendice A de \cite{ducros2018}. 

\begin{defi}\label{def-filtreaff}
Soit $X$ un espace $k$-analytique et soit $x$
un point de $X$. Un \textit{filtre affinoïde}
sur l'espace pointé $(X,x)$ 
est un ensemble $\Theta$ de domaines affinoïdes 
de $X$ contenant $x$ qui est filtrant et est tel que 
tout voisinage de $x$ dans $X$
contienne un élément de $\Theta$. 
\end{defi}

\begin{enonce}[remark]{Exemples}\label{exem-filtres}
En pratique, nous considérerons les filtres
affinoïdes suivantes sur $(X,x)$ : 

\begin{itemize}[label=$\diamond$] 
\item lorsque le germe $(X,x)$ est bon, le filtre formé
de tous les voisinages affinoïdes 
de $x$ dans $X$, que nous nous permettrons de noter encore $(X,x)$
 ; 
\item
lorsque le germe $(X,x)$
est $\Gamma$-strict, 
le filtre $(X\grot^\Gamma,x)$ 
formé de tous les domaines affinoïdes
$\Gamma$-stricts de $X$ contenant $x$ ;

\item
lorsque le germe $(X,x)$
est $\Gamma$-strict et que $\xi$
est un élément de la réduction
de Temkin $\Gamma$-graduée
$\widetilde{(X,x)}^\Gamma$,
le filtre $(X\grot^\Gamma,x,\xi)$ 
formé de tous les domaines affinoïdes
$\Gamma$-stricts $V$
de $X$ contenant $x$ et tels que
$\widetilde {(V,x)}^\Gamma$ contienne $\xi$. 
\end{itemize}
(Lorsque $\Gamma=\R_+^\times$,
nous omettrons
le plus souvent de l'indiquer en 
exposant). 
\end{enonce}

Soit $\Theta$
un filtre affinoïde sur $(X,x)$. Nous 
noterons
$\dimc \Theta$ la
\textit{dimension centrale}
de $\Theta$, c'est-à-dire
le minimum des dimensions
de $\adhp xV$ pour $V$ appartenant à 
$\Theta$.
On a de manière évidente
\[d_k(x)\leq \dimc \Theta\leq \dim \adhp xX\leq \dim_x X.\]
La dimension centrale $\dimc \Theta$ coïncide avec $\dim \adhp xX$ si et seulement si
$\dim \adhp xV=\dim \adhp xX$ pour tout domaine 
affinoïde $V\in \Theta$, ce qui revient à demander que 
$\adhp xV$ soit une composante irréductible
de $V\cap \adhp xX$ pour tout tel $V$. 
La dimension centrale $\dimc \Theta
$ coïncide avec $\dim_x X$ si et seulement si $\adhp xV$ est une composante irréductible
de $V$ pour tout $V\in\Theta$. Cette condition est notamment satisfaite dès que 
$d_k(x)=\dim_x X$, c'est-à-dire dès que $x\in \mathsf A(X)$. Mais elle l'est aussi
par exemple lorsque $\dim X=1$ et lorsque $x$ n'est pas un point rigide.

Nous noterons $\mathscr O_{\Theta}$ 
la colimite des $\mathscr O(V)$ pour $V$ appartenant à
$\Theta$. Lorsque $\Theta$ est de l'un des trois types 
décrits en \ref{exem-filtres}
nous le ferons figurer 
sans parenthèses
en indice et écrirons simplement
$\mathscr O_{X,x}, \mathscr O_{X\grot^\Gamma,x}$ et
$\mathscr O_{X\grot^\Gamma,x,\xi}$ (si $X$ est bon, la notation 
$\mathscr O_{X,x}$ de ce paragraphe a donc bien son sens habituel : elle désigne
l'anneau local en $x$ du faisceau $\mathscr O_X$ restreint à la catégorie des ouverts de $X$). 

Soit
$V$ un domaine affinoïde appartenant à $\Theta$ et soit $U$ un voisinage de $x$ dans $V$.
Par définition, $U$ contient une partie de la forme $U'\cap V$, où $U'$ est un voisinage ouvert de $x$
dans $X$ ; puisque $\Theta$ est un
filtre affinoïde de $(X,x)$, il existe un domaine
affinoïde $V'$ de $U'$ appartenant à $\Theta$, et un domaine affinoïde $W$ de $V\cap V'$ appartenant
à $\Theta$; en particulier, $W\subset U$. 
Il s'ensuit que la flèche naturelle $\mathscr O(V)\to \mathscr O_\Theta$ se factorise par
$\mathscr O_{V,x}$. Ceci valant pour tout $V\in \Theta$, on voit que $\mathscr O_\Theta$ peut
également se décrire comme la colimite de la famille des $\mathscr O_{V,x}$ pour $V\in \Theta$.

\begin{exem}\label{exemple-huber}
Si la valeur absolue de $k$ n'est pas triviale et si $X$ est strictement
$k$-analytique, se donner un élément $\xi\in \widetilde {(X,x)}^{\{1\}}$
revient à se donner une spécialisation $x^+$ de $x$
dans l'espace adique $X^{\mathrm{ad}}$ associé à $X$, et $\mathscr O_{X\grot^{\{1\}},x,\xi}$
est alors simplement l'anneau local de Huber $\mathscr O_{X^{\mathrm{ad}},x^+}$. 
\end{exem}

Soient $\Phi$ et $\Theta$
deux filtres 
affinoïdes
sur $(X,x)$. Nous dirons que
$\Phi$ raffine $\Theta$ et
écrirons
$\Phi\preceq \Theta$
si
tout élément
de $\Theta$
contient un élément de
$\Phi$. Nous dirons que $\Phi$ est
équivalent à $\Theta$ et écrirons 
$\Phi\sim \Theta$ si $\Phi
\preceq \Theta$ et $\Theta\preceq \Phi$.

Si 
$\Phi\preceq \Theta$
on a
un morphisme naturel
$\mathscr O_\Theta
\to \mathscr O_\Phi$, qui est 
un isomorphisme si
$\Phi\sim \Theta$. Et l'on 
 par ailleurs
$\dimc \Phi\leq \dimc \Theta$
avec égalité
si $\Phi\sim \Theta$. 

Soit $Z$ un sous-espace analytique fermé de $X$ contenant $x$.
Nous noterons $\Theta\cap Z$
l'ensemble $\{V\cap Z\}_{V\in \Theta}$. C'est
un filtre affinoïde de $(Z,x)$, et il résulte immédiatement 
des définitions que 
$\dimc \Theta\cap Z=\dimc \Theta$ ;  pour tout
filtre affinoïde
$\Psi$ sur $(Z,x)$ tel que $\Psi\sim\Theta\cap Z$ on a donc
$\dimc \Psi=\dimc \Theta$.

Ceci s'applique par exemple
avec $\Theta=(X\grot^\Gamma,x,\xi)$ (resp. $(X\grot^\Gamma,x)$, resp. 
$(X,x)$ si $X$ est bon) et $\Psi=
(Z^\Gamma\grot,x,\xi)$ (resp. $\mathsf (Z^\Gamma\grot,x,\xi)$, resp. 
$(Z,x)$) (nous avons utilisé
le fait que $\widetilde{(Z,x)}^\Gamma=\widetilde{(X,x)}^\Gamma$ si bien que $\xi$ peut être 
vu comme appartenant à  $\widetilde{(Z,x)}^\Gamma$). Précisons que dans chacun de ces trois 
cas la relation 
$ \Theta\cap Z\preceq \Psi$ est évidente et la relation
$\Psi\preceq \Theta\cap Z$ provient du fait que $\Psi$ est un sous-ensemble
cofinal de $\Theta\cap Z$. 

\begin{enonce}[remark]{Exemples}\label{exemples-raffinement}
Si
l'espace $X$ est $\Gamma$-strict et si $\xi$
appartient à $\widetilde{(X,x)}^\Gamma$
alors 
$(X\grot^\Gamma,x)\preceq
(X\grot^\Gamma,x,\xi)$. Si
de plus
l'espace  $X$ est bon alors $(X\grot^\Gamma,x,\xi)
\preceq (X,x)$,
et si de surcroît 
la composante homogène $\hrt x^\Gamma$
du corps résiduel \textit{gradué} $\hrt x$ est constituée d'éléments algébriques
sur le corps gradué $\widetilde k$, il résulte immédiatement
de la théorie de la réduction des germes de Temkin que pour tout 
$V$ appartenant à 
$(X^\Gamma\grot,x)$ 
alors $\widetilde{(V,x)}=\widetilde {(X,x)}$, ce qui veut dire que $V$ est un voisinage
de $x$. On a donc dans ce cas 
\[
(X^\Gamma\grot,x,\xi)
=(X^\Gamma\grot,x)
\sim (X,x).\]
\end{enonce}

\begin{exem}\label{exem-rayon}
Nous allons reprendre essentiellement ici l'exemple
détaillé à la section 4.4 de \cite{ducros2018} (suggéré par Temkin, et destiné
originellement à montrer que la platitude
naïve n'est en général pas stable par changement de base). 

Supposons donnés un réel $r>0$ et une série
entière $f=\sum a_i T^i\in k[\![t]\!]$ de rayon exactement $r$. 
On voit $f$ comme une fonction analytique sur le disque fermé $\mathscr M(k\{T/r\})$, définissant donc 
un morphisme de $\mathscr M(k\{T/r\})$ vers $\A^{1,\mathrm{an}}_k$ dont on considère le graphe 
\[\phi=(\mathrm{Id}, f)\colon M(k\{T/r\})\to M(k\{T/r\})\times_k \A^{1,\mathrm{an}}_k.\]
Le morphisme
$\phi$ induit un isomorphisme entre $ M(k\{T/r\})$ et un sous-espace analytique fermé $Z$ du domaine 
analytique $M(k\{T/r\})\times_k \A^{1,\mathrm{an}}_k$ de $\A^{2,\mathrm{an}}_k$. 
Posons $X=\A^{2,\mathrm{an}}_k$ et $V=M(k\{T/r\})\times_k \A^{1,\mathrm{an}}_k$, et notons $x$
le point $\phi(\eta_r)$ de $Z$.
Le morphisme $\phi$ permet d'identifier
le corps résiduel gradué $\hrt x$ à 
$\hrt \eta_r=\widetilde k(t)$ (où $t=\widetilde{T(\eta_r)}$ 
est une indéterminée de degré $r$).
Nous noterons $\xi$ le point de $\widetilde{(X,x)}$ correspondant à la valuation 
«à l'infini» sur $\hrt x=\widetilde  k(t)$ (qui envoie une fraction sur l'opposé de son degré monomial).
On a évidemment $\dim_x X=2$ et $d_k(x)=1$. 
On démontre (\cite{ducros2018}, preuve 
de la proposition 4.4.6) que 
$\dimc (X,x)=2$. Autrement dit, $x$ est Zariski-dense dans chacun de ses
voisinages analytiques connexes
dans $X$ (de tels voisinages sont irréductibles par normalité de l'espace affine). 
En revanche, $\adhp xV$ coïncide
avec la courbe $Z$, qui est de dimension $1$. 
Compte-tenu du fait que 
$d_k(x)=1$ (ce qui oblige toute dimension centrale en $x$, 
quel que soit le filtre affinoïde considéré,  
à valoir au moins 1), il vient
 \[\dimc  (X\grot,x)=\dimc(X\grot^{r^\Z},x)=\dimc (V,x)=1.\]
 Mentionnons
enfin pour information que $\dimc (X\grot,x,\xi)=2$ ; nous 
ne le démontrerons pas ici, mais on peut le voir en adaptant la preuve de la proposition 4.4.6
de \cite{ducros2018} (qui consiste précisément à s'assurer que la courbe $Z$ ne peut pas être prolongée
dans la direction définie par $\xi$). 
Supposons maintenant que $\abs {k^\times}\neq \{1\}$
et que $r$ n'appartient pas à $\abs{k^\times}^\Q$. 
On a alors $\hrt{\eta_r}^1=\widetilde k^1$, 
ce qui entraîne
que $(X\grot^{\{1\}},x)
\sim (X,x)$ (exemples \ref{exemples-raffinement})
et partant que 
$\dimc (X\grot^{\{1\}},x)=
\dimc (X,x)=2.$
\end{exem}

Soit $X$ un bon espace $k$-analytique
et soit $x\in X$. L'anneau local $\mathscr O_{X,x}$ est
noethérien et hensélien \cite[th. 2.1.4 et 2.1.5]{berkovich1993},  
et même excellent \cite[th. 2.13]{ducros2009}.  
Et si $V$ est un bon domaine analytique de $X$
contenant $x$, le morphisme local $\mathscr O_{X,x}\to \mathscr O_{V,x}$
est régulier 
\cite[th. 3.3]{ducros2009} ; sa platitude est due essentiellement à Berkovich, \textit{cf.} \cite[prop. 2.2.4]{berkovich1990}, 
qui se fonde lui-même sur le cas strict, 
lequel est établi dans \cite[\S 7.3.2; cor. 6]{bosch-g-r1984}. 
On a par ailleurs d'après
\cite[cor. 3.2.9]{ducros2018} l'égalité 
\[\dimc (X,x)+\dim \mathscr O_{X,x}=\dim_x X.\] 
La proposition suivante généralise les résultats ci-dessus au cas des anneaux locaux associés à un filtre affinoïde ; précisons qu'on les utilise de manière essentielle dans la preuve (nous ne les redémontrons donc pas).

\begin{prop}\label{prop-otheta}
Soit $(X,x)$ un espace $k$-analytique
pointé et soit $\Theta$
un filtre affinoïde
sur $(X,x)$. 

\begin{enumerate}[1]
\item L'anneau $\mathscr O_\Theta$ est local, hensélien, excellent, et 
\[\dimc \Theta+\dim \mathscr O_\Theta=\dim_x X.\]
\item Soit $\Phi$ un filtre
affinoïde sur $(X,x)$
tel que $\Theta\preceq \Phi$. 
Le morphisme canonique  $\mathscr O_\Phi\to \mathscr O_\Theta$ est local 
et régulier. 
\item Pour tout $V\in \Theta$ le morphisme canonique $\mathscr O_{V,x}
\to \mathscr O_\Theta$ est local et régulier. 
\end{enumerate}

\end{prop}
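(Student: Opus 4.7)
The plan is to reduce everything to the established properties of the ordinary local rings $\mathscr O_{V,x}$ of good Berkovich spaces, by restricting to a cofinal subsystem of $\Theta$ on which local dimensions stabilize. First I would choose $V_0 \in \Theta$ small enough that $\dim_x V_0 = \dim_x X$ and that $\dim \adhp x{V_0}$ attains the minimum $\dimc \Theta$; replacing $\Theta$ by the cofinal subfamily $\{W \in \Theta : W \subseteq V_0\}$ alters neither $\mathscr O_\Theta$ nor $\dimc \Theta$. Minimality of $\dimc$ then forces $\dim \adhp xW = \dimc \Theta$ for every $W$ in this subfamily, and the formula $\dimc(W,x) + \dim \mathscr O_{W,x} = \dim_x W$ of \cite{ducros2018} Cor.~3.2.9 yields the common value $\dim \mathscr O_{W,x} = \dim_x X - \dimc \Theta$ throughout the system.

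For $W \subseteq V$ in the cofinal subsystem, the transition map $\mathscr O_{V,x} \to \mathscr O_{W,x}$ is local and regular by \cite{ducros2009} Th.~3.3. Since source and target have the same Krull dimension and the map is flat local, the dimension formula for flat local morphisms forces the closed fiber to be zero-dimensional; combined with regularity, the fiber is a finite separable field extension of the residue field of $\mathscr O_{V,x}$, and in particular $\mathfrak m_W = \mathfrak m_V \mathscr O_{W,x}$. Passing to the filtered colimit yields that $\mathscr O_\Theta$ is local, with maximal ideal $\mathfrak m_{V_0}\mathscr O_\Theta$ and residue field a separable algebraic extension of that of $\mathscr O_{V_0,x}$. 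Henselianness follows from the standard fact that a filtered colimit of henselian local rings along local morphisms is henselian, and locality of the canonical maps in (2), (3) is immediate by construction.

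The main obstacle is noetherianness and excellence of $\mathscr O_\Theta$. The strategy I would pursue is to identify $\mathscr O_\Theta$ with a henselization of $\mathscr O_{V_0,x}$ at an appropriate prime ideal determined by $\Theta$: the zero-dimensional separable fiber analysis above embeds $\mathscr O_\Theta$ into the henselization $\mathscr O_{V_0,x}^{\mathrm{h}}$ at the prime corresponding to $\adhp x{V_0}$, and the delicate step is establishing surjectivity of this embedding, i.e.\ that every element of the henselization is realized by some analytic domain $W \in \Theta$. This uses the exhaustion of neighborhoods in $\Theta$ together with the henselianness of each $\mathscr O_{V,x}$ to approximate the algebraic equations defining étale extensions in the henselization by analytic data belonging to $\Theta$. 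Once the identification is done, noetherianness is immediate from noetherianness of $\mathscr O_{V_0,x}$, and excellence transfers via Greco's theorem that henselization preserves excellence, combined with the excellence of $\mathscr O_{V_0,x}$ established in \cite{ducros2009} Th.~2.13.

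Finally, the dimension formula $\dimc \Theta + \dim \mathscr O_\Theta = \dim_x X$ is a direct consequence: from $\mathfrak m_{\mathscr O_\Theta} = \mathfrak m_{V_0}\mathscr O_\Theta$ together with flatness of $\mathscr O_{V_0,x} \to \mathscr O_\Theta$ (a filtered colimit of flat maps) one deduces $\dim \mathscr O_\Theta = \dim \mathscr O_{V_0,x} = \dim_x X - \dimc \Theta$. Regularity of the canonical maps in (2) and (3) then follows from the expression of $\mathscr O_\Theta$ as a filtered colimit of regular $\mathscr O_{V_0,x}$-algebras, noetherianness having just been established.
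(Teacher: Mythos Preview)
Your reduction to a cofinal subsystem and the analysis of the transition maps $\mathscr O_{V,x}\to\mathscr O_{W,x}$ (same Krull dimension, hence zero-dimensional regular closed fibre, hence $\mathfrak m_W=\mathfrak m_V\mathscr O_{W,x}$ with separable residue extension) match the paper exactly. The divergence, and the genuine gap, is in how you propose to obtain noetherianity and excellence.

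Your plan is to identify $\mathscr O_\Theta$ with a henselization of $\mathscr O_{V_0,x}$. This identification is not available: the transition maps $\mathscr O_{V,x}\to\mathscr O_{W,x}$, while flat with \'etale-looking closed fibres, are \emph{not} of finite type, hence not \'etale, and the residue field of $\mathscr O_\Theta$ has no reason to coincide with that of $\mathscr O_{V_0,x}$ (ruling out the ordinary henselization, which you cannot use anyway since $\mathscr O_{V_0,x}$ is already henselian) nor with a separable closure (ruling out strict henselization). The ``delicate surjectivity step'' you flag is not merely delicate; there is no candidate henselization for which it could hold in general. Consequently Greco's theorem is not applicable.

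The paper bypasses this entirely. For noetherianity it invokes \cite{ega31}, Lemme~0.10.3.1.3, which gives directly that a filtered colimit of noetherian local rings along flat local maps satisfying $\mathfrak m_W=\mathfrak m_V\mathscr O_{W,x}$ is noetherian local, with each $\mathscr O_{V,x}\to\mathscr O_\Theta$ flat. For excellence it quotes Marot \cite{marot1979}, Cor.~4.4: a noetherian filtered colimit of excellent local rings along flat maps with separable residue extensions is excellent. For~(2) it sets up a filtered index set of pairs $(U,V)$ with $U\in\Phi$, $V\in\Theta$, $V\subset U$, so that $\mathscr O_\Phi\to\mathscr O_\Theta$ becomes a filtered colimit of the regular maps $\mathscr O_{U,x}\to\mathscr O_{V,x}$, and then applies Marot \cite{marot1979}, Prop.~1.1. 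Your final sentence gestures at~(2) and~(3) but does not set up this indexing; note that~(3) is in any case a special case of~(2), taking $\Phi$ to be the voisinages affino\"ides of $x$ in $V$.
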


\begin{proof}
Remarquons pour commencer que (3) est un cas particulier de (2), que nous avons choisi d'expliciter
car il nous semble important. En effet, il suffit pour démontrer (3) d'appliquer (2) en prenant pour $\Phi$
l'ensemble des voisinages affinoïdes de $x$ dans $V$. Il reste donc à démontrer (1) et (2).

Posons $d=\dimc \Theta$ ; 
soit $\Theta'$ le sous-ensemble
de $\Theta$ formé des domaines
$V$ tels que $\dim \adhp xV=d$. 
On a 
\[\mathscr O(\Theta)=\colim_{V\in \Theta}\mathscr O_{V,x}
=\colim_{V\in \Theta'}\mathscr O_{V,x},\]
la dernière égalité résultant du fait que $\Theta'$ est cofinal dans $\Theta$. 
Pour tout $V\in \Theta'$ nous noterons $\mathfrak m_{V,x}$
l'idéal maximal de $\mathscr O_{V,x}$, et $\kappa_V(x)$ son corps résiduel. 
Soient $V$ et $W$ deux éléments de $\Theta'$ tels que $W\subset V$. 
Il résulte de la définition de $\Theta'$ que $\dimc (V,x)=\dimc (W,x)=d$.
On déduit alors du corollaire 3.2.9 de \cite{ducros2018} que
\[\dim \mathscr O_{W,x}=\dim\mathscr O_{V,x}=\dim_x X-d\] (on utilise le fait que
$\dim_x W=\dim_x V=\dim_x X$). En particulier,
les anneaux locaux $\mathscr O_{V,x}$ 
et $\mathscr O_{W,x}$ ont même dimension.
Il s'ensuit par platitude de
la flèche $\mathscr O_{V,x}\to \mathscr O_{W,x}$
que $\mathscr O_{W,x}/\mathfrak m_{V,x}\mathscr O_{W,x}$ 
est local artinien, puis par régularité de cette même flèche que
$\mathscr O_{W,x}/\mathfrak m_{V,x}\mathscr O_{W,x}$  est une extension séparable
de $\kappa_V(x)$. Autrement dit, $\mathfrak m_{W,x}$
est égal à $\mathfrak m_{V,x}
\mathscr O_{W,x}$ et $\kappa_W(x)$ est séparable sur $\kappa_V(x)$.

Pour tout couple $(V,W)$ d'objets de $\Theta'$ avec $W\subset V$ 
le morphisme $\mathscr O_{V,x}\to \mathscr O_{W,x}$ d'anneaux locaux
noethériens est plat, et l'on vient 
de voir que $\mathfrak m_{W,x}
=\mathfrak m_{V,x}\mathscr O_{V,x}$. On déduit alors du lemme 10.3.13
du chapitre 0 de \cite{ega31} que $\mathscr O_\Theta$ est local noethérien, et que
pour tout $V\in \Theta'$ le morphisme $\mathscr O_{V,x}\to \mathscr O_\Theta$ est plat
et vérifie l'égalité $\mathfrak m_\Theta=\mathfrak m_{V,x}\mathscr O_\Theta$, où $\mathfrak m_\Theta$
désigne l'idéal maximal de $\mathscr O_\Theta$ ; ceci implique que $\dim \mathscr O_\Theta=\dim \mathscr O_{V,x}=
\dim_x X-d$. 

Enfin, comme chacun des $\mathscr O_{V,x}$ est hensélien, un argument immédiat de passage à la limite
montre que $\mathscr O_\Theta$ est hensélien.

Pour tout couple $(V,W)$ d'objets de $\Theta'$ avec $W\subset V$,
le morphisme $\mathscr O_{V,x}\to \mathscr O_{W,x}$ est plat, et 
on a	 vu que $\kappa_W(x)$ est séparable
sur $\kappa_V(x)$.
La colimite $\mathscr O_\Theta$ des $\mathscr O_{V,x}$ (pour $V\in \Theta'$) étant 
noethérienne par ce qui précède,  et chacun des $\mathscr O_{V,x}$ étant excellent, 
il résulte de \cite[cor. 4.4]{marot1979}
que
$\mathscr O_\Theta$ est lui-même excellent, ce qui achève de montrer (1).

Montrons maintenant (2).
Soit $I$ l'ensemble des couples $(U,V)$ où $U$ appartient à $\Phi$, où $V$ appartient à $\Theta$ et 
où $V\subset U$. L'ensemble $I$ est filtrant (pour l'inclusion composante par composante) : 
si $(U,V)$ et $(U',V')$ sont deux éléments de $I$, on choisit un élément $U''$ de $\Phi$ contenu dans 
$U\cap U'$, un élément $V''$ de $\Theta$ contenu dans $U''$, puis un élément $V'''$
de $\Theta$ contenu dans $V\cap V'\cap V''$, et $(U'',V''')$ est un élément de $I$ tel que
$U''\subset U\cap U'$ et $V'''\subset V\cap V'$. 
De plus tout élément $U$ de $\Phi$ est le premier terme d'un couple appartenant 
à $I$, et les éléments de $\Theta$ qui sont le second terme d'un couple appartenant à $I$
forment une partie cofinale de $\Theta$ : si $V\in \Theta$ on choisit d'abord
$U\in \Phi$, puis un élément $V'$ de $\Theta$ contenu dans $U$, et un élément $V''$
de $\Theta$ contenu dans $V\cap V'$ : on a alors $V''\subset V$ et $(V'',U)\in \Phi$. 
Si l'on décide de noter $U_i$ et $V_i$ les deux composantes d'un élément $i$ de $I$, on voit par ce qui précède
que
\[\mathscr O_\Phi=\colim_i \mathscr O_{U_i,x}\;\text{et}\;
\mathscr O_\Theta=\colim_i \mathscr O_{V_i,x}.\]
De surcroît l'on dispose pour tout $i$ d'un morphisme local
régulier de
$\mathscr O_{U_i,x}$ vers $\mathscr O_{V_i,x}$, et le morphisme
$\mathscr O_\Phi\to \mathscr O_\Theta$ est induit par la collection des
$\mathscr O_{U_i,x}\to \mathscr O_{V_i,x}$. Il résulte alors de la proposition 
1.1 de \cite{marot1979} que $\mathscr O_\Phi\to \mathscr O_\Theta$ 
est régulier.
\end{proof}

\begin{coro}\label{coro-huber-excellent}
Supposons que la valeur absolue de $k$ n'est pas triviale.  Tout anneau local d'un espace adique de type fini
sur $(k,k^\circ)$ est excellent. 
\end{coro}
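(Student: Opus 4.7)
The plan is to reduce the statement to Proposition~\ref{prop-otheta} via the dictionary recalled in \ref{exemple-huber} between Huber local rings and local rings attached to affinoid filters of the form $(X\grot^{\{1\}},x,\xi)$. The question being local on the adic space, I would first pass to the affinoid case: if $\mathcal{X}$ is an adic space of finite type over $(k,k^\circ)$ and $x^+$ is a point of $\mathcal{X}$, then a suitable open neighborhood of $x^+$ is of the form $\mathrm{Spa}(A,A^\circ)$ for some strictly $k$-affinoid algebra $A$, so that $\mathscr O_{\mathcal X,x^+}$ is the Huber local ring of $\mathrm{Spa}(A,A^\circ)$ at $x^+$.

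Next I would bring in the strictly $k$-analytic space $X=\mathscr M(A)$ in the sense of Berkovich; by construction $X^{\mathrm{ad}}=\mathrm{Spa}(A,A^\circ)$. Using the hypothesis that the absolute value of $k$ is non-trivial (which is needed for the $\{1\}$-strict theory to behave well, see \ref{rappel-gammastr}), the point $x^+$ corresponds to a pair $(x,\xi)$ where $x\in X$ is the image of $x^+$ under the canonical continuous map $X^{\mathrm{ad}}\to X$, and $\xi$ is an element of the $\{1\}$-graded Temkin reduction $\widetilde{(X,x)}^{\{1\}}$. Paragraph~\ref{exemple-huber} then gives the crucial identification
\[
\mathscr O_{\mathcal X,x^+}=\mathscr O_{X\grot^{\{1\}},x,\xi}.
\]

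Finally, the right-hand side is exactly the local ring attached to the affinoid filter $(X\grot^{\{1\}},x,\xi)$ on the pointed space $(X,x)$. Proposition~\ref{prop-otheta}~(1) applied to this filter yields that $\mathscr O_{X\grot^{\{1\}},x,\xi}$ is local, henselian and excellent; in particular it is excellent, which concludes the proof.

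The only mild subtlety I anticipate is the bookkeeping between Berkovich and adic points: one must check that the point $x$ and the reduction datum $\xi$ obtained from $x^+$ genuinely describe the affinoid filter whose colimit computes the Huber local ring, but this is precisely the content of the correspondence recalled in~\ref{exemple-huber}. Once that identification is in hand, the corollary is essentially a direct translation of Proposition~\ref{prop-otheta}.
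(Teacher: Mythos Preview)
Your proof is correct and follows exactly the same approach as the paper, which simply states that the corollary is a direct consequence of Proposition~\ref{prop-otheta}~(1) and of~\ref{exemple-huber}. You have merely spelled out in more detail the reduction to the affinoid case and the identification $\mathscr O_{\mathcal X,x^+}=\mathscr O_{X\grot^{\{1\}},x,\xi}$ that the paper leaves implicit.
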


\begin{proof}
C'est une conséquence directe de l'assertion (1) de la proposition 
\ref{prop-otheta} ci-dessus et de l'exemple \ref{exemple-huber}.
\end{proof}

\begin{lemm}\label{lem-strongly-finite}
Soit $(X,x)\to (S,s)$ un morphisme fini entre espaces $k$-analytiques
pointés. Soit $\Theta$ un filtre affinoïde sur $(X,x)$
et soit $\Phi$ un filtre affinoïde sur $(S,s)$ tels que $X\times_S U$ appartienne
à $\Theta$ pour tout $U$ appartenant à $\Phi$. 
On suppose que $\dimc \Theta =\dim \adhp xX$. On a alors
l'égalité
$\dimc \Phi=\dim \adhp sS$.

\end{lemm}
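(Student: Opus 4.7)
The plan is to reduce everything to a single elementary fact: a finite morphism of $k$-analytic spaces sends the Zariski closure of a point to the Zariski closure of its image, preserving dimension.

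\textbf{Step 1: The preservation lemma.} I first verify that for any finite morphism $g\colon Y\to T$ of $k$-analytic spaces and any $y\in Y$, one has $g(\adhp yY)=\adhp{g(y)}T$, and in particular $\dim\adhp yY=\dim\adhp{g(y)}T$. The image $g(\adhp yY)$ is Zariski closed in $T$ because a finite morphism is Zariski-closed (via coherence of its direct images), and it contains $g(y)$, hence contains $\adhp{g(y)}T$. Conversely, $g^{-1}(\adhp{g(y)}T)$ is a Zariski closed subset of $Y$ containing $y$, hence contains $\adhp yY$, proving the reverse inclusion. The induced finite surjection $\adhp yY\to\adhp{g(y)}T$ then preserves dimension by the standard analytic theory of dimension (\cite{ducros2007}).

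\textbf{Step 2: Double application.} I apply Step 1 twice. First, to $X\to S$ at the point $x$: this yields $\dim\adhp xX=\dim\adhp sS$. Second, for each $U\in\Phi$, the base change $V_U:=X\times_S U\to U$ is again finite, and sends $x$ to $s$ (viewing $x$ as a point of $V_U$ via the canonical identification); so $\dim\adhp x{V_U}=\dim\adhp sU$.

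\textbf{Step 3: Invoking the hypothesis on $\Theta$.} The hypothesis $\dimc\Theta=\dim\adhp xX$ is equivalent (by the discussion following the definition of $\dimc$) to the statement that $\dim\adhp xV=\dim\adhp xX$ for \emph{every} $V\in\Theta$. Since $V_U\in\Theta$ by assumption, we get $\dim\adhp x{V_U}=\dim\adhp xX$.

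\textbf{Step 4: Conclusion.} Chaining the three equalities from Steps 2 and 3, we obtain $\dim\adhp sU=\dim\adhp x{V_U}=\dim\adhp xX=\dim\adhp sS$ for every $U\in\Phi$. Taking the minimum over $U\in\Phi$ (and noting that the trivial inequality $\dim\adhp sU\leq\dim\adhp sS$ already gives one direction) yields $\dimc\Phi=\dim\adhp sS$. There is no real obstacle here: the only non-trivial ingredient is Step 1, which is essentially folklore in the analytic setting and follows from standard properties of finite morphisms.
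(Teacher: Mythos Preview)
Your proof is correct and follows essentially the same route as the paper's. The paper first reduces to the case where $X=\adhp xX$ and $S=\adhp sS$ (using that a finite morphism sends $\adhp xX$ onto $\adhp sS$ with equal dimension, i.e.\ your Step~1), and then for each $U\in\Phi$ uses finiteness of $X\times_S U\to U$ to transfer $\dim\adhp x{X\times_S U}=d$ to $\dim\adhp sU=d$ --- which is exactly your Steps 2--4; the only difference is that you package the key fact as a standalone lemma and apply it twice, while the paper absorbs the first application into an initial reduction.
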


\begin{proof}
Quitte à remplacer
$X$ et $S$ par les
adhérences de
Zariski (disons réduites) respective
de $x$ et $s$, 
et $\Theta$ (resp. $\Phi$) par $\Theta\cap \adhp xX$
(resp. $\Phi\cap \adhp sS$)
on peut supposer que $X$ et $S$
sont irréductibles de même dimension, que
nous noterons
$d$, et que $x$ et $s$
sont Zariski-denses dans $X$ et
$S$ respectivement ; on a donc 
$\dimc \Theta=d$.
Soit $U$
un élément de $\Phi$ 
et soit $V$ le produit fibré $X\times_S U$. 
Nos hypothèse assurent tout d'abord que 
$V$ appartient à $\Theta$,  puis
que $\dim \adhp xV=d$.  
Comme $V\to U$ est fini, cette dimension
est égale à celle de $\adhp sU$,
et l'on a donc bien 
$\dimc \Phi=d$. 
\end{proof}

Nous pouvons maintenant énoncer le théorème principal de cette section, celui sur lequel
reposerons nos nouveaux critères de platitude. 

\begin{theo}\label{tres-generique-robuste}
Soit $Y\to X$ un morphisme d'espaces $k$-analytiques, soit $x$ un point de $X$
et soit $y$ un point de $Y_x$. 

\begin{enumerate}[1]
\item On suppose que 
$X$ et $Y$ sont bons, 
que 
$y$ appartient à $\mathrm{Int}(Y/X)$, que $\dimc (X,x)=\dim_x X$ et que 
$\dimc (Y_x,y)=\dim_y Y_x$. 
On a alors 
\[\dimc (Y,y)=\dim_y Y=\dim_x X+\dim_y Y_x.\]

\item On suppose que 
$X$ et $Y$ sont $\Gamma$-stricts, que 
$\dimc (X\grot^\Gamma,x)=\dim_x X$
et que $\dimc ((Y_x)\grot^\Gamma,y)=\dim_y Y_x$. 
On a alors 
\[\dimc (Y\grot^\Gamma,y)=\dim_y Y=\dim_x X+\dim_y Y_x.\]

\end{enumerate} 
\end{theo}

\begin{enonce}[remark]{Commentaires}
C'est uniquement l'assertion (2) dont nous aurons besoin par la suite, mais nous avons choisi d'inclure
l'assertion (1) dans ce théorème parce qu'elle nous a semblé
intéressante en elle-même et parce que sa démonstration est courte. La démonstration de (2) est plus délicate
et fait intervenir de manière essentielle le théorème \ref{theo-casqe}. 
\end{enonce}

\begin{proof}[Démonstration du théorème \ref{tres-generique-robuste}]
Posons $n=\dim_x X$ et $d=\dim_y Y_x$. On a alors $\dim_y Y\leq n+d$. 
Il suffit donc dans le cas (1) (resp. (2)) de démontrer que pour tout voisinage analytique 
$\Omega$ de $y$ dans $Y$ (resp. pour tout domaine analytique $\Gamma$-strict $\Omega$
de $Y$ contenant $y$) la dimension de
$\adhp y\Omega$ est égale à $n+d$. 
Quitte à remplacer $Y$ par $\Omega$, 
on voit qu'il suffit de démontrer que $\dim \adhp y Y=n+d$ (et l'on peut de surcroît
le faire après avoir restreint $X$ et $Y$ autant que de besoin). C'est ce que nous allons
faire dans chacune des situations (1) et (2). 
Remarquons d'emblée
que dans chacune des deux situations considérées
on a
l'égalité $\dim \adhp xX=\dim_x X=n$, si bien que le point $x$ est 
situé sur une unique
composante irréductible de $X$ ; on peut donc quitte à restreindre $X$ le supposer
purement de dimension $n$. 

Montrons (1). 
On peut supposer que $Y$ et $X$ sont affinoïdes, puis, en vertu de \cite[cor. 4.7]{ducros2007}, 
qu'il existe 
une factorisation
du morphisme $Y\to X$ sous la forme $Y\to T\to X$ où $T$ est affinoïde et quasi-lisse purement 
de dimension relative $d$ sur $X$ et où $Y\to T$ est fini. 
Le point $y$ appartient à  $\mathrm{Int}(Y/X)$, et son image $t$ sur $T$ 
appartient alors à $\mathrm{Int}(T/X)$. Puisque $Y\to T$ est fini, l'image de $\adhp yY$ sur $T$ est un fermé 
de Zariski $Z$ de $X$, et puisque $\dimc (Y_x,y)=d$, le fermé $Z_x$ est de dimension au moins $d$ en $t$ ; la fibre
$T_x$ étant quasi-lisse (et 
\textit{a fortiori}
normale) purement de dimension $d$, le fermé $Z_x$ contient la composante
connexe de $t$ dans $T_x$, et est en particulier un voisinage de $t$ dans $T_x$. L'égalité 
$\dimc (X,x)=\dim_x X$ entraîne que $\mathscr O_{X,x}$ est artinien
(c'est un cas particulier de la proposition \ref{prop-otheta}, mais qui est en fait une 
conséquence directe du corollaire 3.2.9 de \cite{ducros2018}), et $T\to X$ est lisse en $t$ car
$t\in \mathrm{Int}(T/X)$. La proposition 6.3.1 de \cite{ducros2018} assure
alors que $Z$ est un voisinage de $t$ dans $T$. Or comme $T$ est quasi-lisse purement de dimension relative $d$
sur $X$, il est purement de dimension $n+d$. En conséquence $\dim Z=n+d$ et $\dim \adhp yY=n+d$, ce qui termine
la démonstration de (1).

Montrons maintenant (2). 
Supposons tout d'abord que $Y\to X$ est quasi-étale (dans ce cas, l'hypothèse que 
$\dimc  ((Y_x)\grot^\Gamma,y)=\dim_y Y_x$ est vide) ; on peut par ailleurs
supposer $X$ et $Y$ compacts. 
On a alors $d=0$ 
et il s'agit de montrer que
$\adhp yY$ est de dimension $n$ ; notons $Z$
le fermé $\adhp yY$ et $T$ son image sur $X$ ; posons $m=\dim Z$. 
Le théorème \ref{theo-casqe} assure l'existence d'une filtration décroissante
$(X_d)$ 
de $X$ par des domaines
analytiques fermés et $\Gamma$-stricts
telle que $T\cap (X_d\setminus X_{d+1})$ soit
pour tout $d$ un fermé de Zariski de $X_d\setminus X_{d+1}$ purement de dimension $m$, 
et telle que $X_d$ soit vide pour $d$ assez grand. 
Soit $d$ l'unique entier tel que $x\in X_d\setminus X_{d+1}$. 
Le fermé de Zariski $T\cap (X_d\setminus X_{d+1})$ de  $X_d\setminus X_{d+1}$
contenant le point $x$, il est de dimension $n$ en vertu de l'égalité
$\dimc (X\grot^\Gamma,x)=n$ ; puisqu'il est par ailleurs
purement de dimension $m$, il vient $m=n$, ce qui achève la preuve de (2) 
dans le cas quasi-étale.

Pour montrer (2) dans le cas général, on peut supposer que $X$ et $Y$ sont affinoïdes
puis, quitte à restreindre $Y$, 
que $Y\to X$ admet une factorisation 
$Y\to T\to X$ où $T$ est affinoïde et $\Gamma$-strict, où $T\to X$ est quasi-lisse purement
de dimension relative $d$, et où $Y\to T$ est fini 
(\cite[cor. 4.7]{ducros2007} ; qu'on puisse de surcroît supposer que $T$ est $\Gamma$-strict
est expliqué au début de la preuve du théorème 8.2.5 de \cite{ducros2018}). 
L'image $t$ de $y$ sur $T_x$ satisfait 
l'égalité $\dimc ((T_x)\grot^\Gamma)=\dim_t T_x$
(lemme \ref{lem-strongly-finite}) et il suffit pour 
montrer que $\dim \adhp yY=n+d$ de s'assurer que
$\dim \adhp tT=n+d$. Cela permet de
supposer que $Y\to X$ est quasi-lisse purement de dimension relative $d$, et que $Y$
et $X$ sont  compacts (ils sont même affinoïdes, 
mais nous n'aurons plus besoin de cette dernière propriété qui 
pourrait ne pas être préservée lors de réductions ultérieures ; la compacité sera par contre essentielle). 
D'après le théorème 3.11 de \cite{ducros2021a}, il existe une famille finie de morphismes quasi-étales $X_i\to X$ à sources affinoïdes 
et $\Gamma$-strictes
dont les images recouvrent $X$ et, pour tout $i$, un recouvrement fini $(Y_{ij})_j$ de $Y\times_X X_i$ par des domaines
analytiques compacts et $\Gamma$-stricts 
tels que les fibres de $Y_{ij}\to X_i$ soient géométriquement connexes. 
Il existe un couple d'indices $(i,j)$ et un antécédent $\eta$ de $y$ sur $Y_{ij}$, dont l'image $\xi$
sur $X_i$ est un antécédent de $x$, et satisfait donc
d'après le cas quasi-étale déjà traité ci-dessus
l'égalité $\dimc((X_i)\grot^\Gamma,\xi)=\dim_{\xi}X_i=n$ ; pour la même raison, 
l'antécédent $\eta$ de $y$ relativement au morphisme
quasi-étale $Y_{ij,\xi}\to Y_x$
vérifie l'égalité
$\dimc ((Y_{ij,\xi})\grot^\Gamma,\eta)=
\dim_\eta Y_{ij,\xi}$. Il suffit pour conclure de démontrer que
$\adhp \eta{(Y_{ij})}$ est de dimension $d+n$ ; autrement dit, on s'est ramené au cas où $Y\to X$ est à fibres
géométriquement connexes. Comme ce morphisme est quasi-lisse, il est plat et son image est donc un domaine
analytique compact et
$\Gamma$-strict de $X$ ; en remplaçant $X$ par ce dernier, on se ramène au cas où toutes les fibres de $Y\to X$ sont non vides, 
et partant géométriquement irréductibles.
Soit $Z$ l'adhérence de Zariski de $y$ dans $Y$, munie de sa structure réduite. Puisque $Y\to X$
est quasi-lisse à fibres géométriquement irréductibles, le faisceau
d'idéaux définissant $Z$ possède un «idéal des coefficients» définissant
un sous-espace analytique fermé $T$ de $X$ \cite[th. 3.9]{ducros2021a}. 
Puisque $\dimc((Y_x)\grot^\Gamma,y)=\dim_y Y_x$ 
et puisque $Y_x$ est irréductible, l'adhérence de Zariski de $y$ dans $Y_x$ est égale à $Y_x$ tout entier. 
Par conséquent, $Z_x$ coïncide ensemblistement avec $Y_x$, et est donc égal à $Y_x$ comme
espace $\hr x$-analytique puisque $Y_x$ est réduit ; on en déduit que $x$ appartient à $T$, 
qui contient dès lors (ensemblistement) la composante irréductible $X_0$ de $X$ sur laquelle
est située $x$,
laquelle est de dimension $n$. Il s'ensuit que $Z$ contient ensemblistement $Y\times_X X_0$, qui est de dimension $n+d$. 
Par conséquent, $\dim Z=n+d$. 
\end{proof}

\section{Platitude et anneaux G-locaux}
\label{section-apgen}

Nous allons maintenant exhiber de nouveaux exemples de situations
dans lesquelles la platitude se détecte 
au niveau des anneaux locaux analytiques, voire au niveau
des anneaux locaux algébriques lorsque cela peut avoir un sens, c'est-à-dire lorsque les espaces en jeu sont des analytifiés
de schémas de type fini sur une algèbre affinoïde.   
Nous renvoyons à la section 2.6 de \cite{berkovich1993} pour la définition et les propriétés de base
des analytifiés (on pourra trouver quelques compléments au chapitre 2 de \cite{ducros2018}, et plus
spécialement à la section 2.7). 

\begin{enonce}[remark]{Notations}
\label{notation-Fx}
Soit $X$ un espace $k$-analytique, soit $x$ un point de $X$ 
et soit $\mathscr F$ un faisceau cohérent sur $X$. 
Si $X$ est bon, nous noterons $\mathscr F_x$ la fibre en $x$ du faisceau $\mathscr F$ \textit{restreint
à la catégories des ouverts de $X$}. Si $X$ et $\Gamma$-strict et si 
$\xi$ désigne un point de $\widetilde{(X,x)}^\Gamma$, nous noterons 
$\mathscr F_{X\grot^\Gamma,x}$ (resp. $\mathscr F_{X\grot^\Gamma,x,\xi}$) la 
colimite des $\mathscr F(V)$ où $V$ parcourt l'ensemble des domaines affinoïdes
$\Gamma$-stricts de $X$ contenant $x$ (resp. contenant $x$ et tels que $\xi\in \widetilde{(V,x)}^\Gamma$). 

Si $\mathscr X$ est un schéma de type fini sur une algèbre affinoïde $A$, nous
désignerons par
$\mathscr X\an$ son analytifié, et par $\mathscr F\an$ l'analytifié d'un faisceau cohérent
$\mathscr F$ sur $\mathscr X$. L'application continue naturelle $\mathscr X\an\to \mathscr X$ sera notée $x\mapsto
x\al$. 
\end{enonce}

\begin{theo}\label{theo-plat-naif}
Soit $Y\to X$ un morphisme entre bons
espaces $k$-analytiques
$\Gamma$-stricts. Soit $y$
un point de $Y$ et soit $x$ son image sur $X$.

\begin{enumerate}[1]
\item Supposons que 
\[\dimc (X\grot^\Gamma,x)=\dim \adhp xX.\]
Soit $\mathscr F$ un faisceau cohérent
sur $Y$ tel que $\mathscr F_y$ soit un $\mathscr O_{X,x}$-module plat. 
Le faisceau $\mathscr F$ est alors $X$-plat en $y$.

\item Supposons que $X=\mathscr X\an$
pour un certain schéma de type fini 
$\mathscr X$ sur une algèbre
$k$-affinoïde
$\Gamma$-stricte $A$, que $Y=\mathscr Y\an$
pour un certain schéma de type fini $\mathscr Y$
sur une algèbre $A$-affinoïde
$\Gamma$-stricte $B$, et que $Y\to X$ est induit par un $A$-morphisme
$\mathscr Y\to \mathscr X$. 
Supposons de plus que 
\[\dimc (X\grot^\Gamma,x)=\dim \overline{\{x\al\}}\an .\]
Soit $\mathscr G$ un faisceau cohérent sur $\mathscr Y$
qui est $\mathscr X$-plat en $y\al$. Le faisceau cohérent
$\mathscr G\an$
est alors $X$-plat en $y$. 
\end{enumerate}
\end{theo}

\begin{enonce}[remark]{Commentaires}\label{commentaires}
Sous les hypothèses de (2), le sous-espace
$\overline{\{x\al\}}\an$ de $X$ est irréductible 
\cite[Prop. 2.7.16]{ducros2018},  et contient
évidemment $\adhp xX$. La
condition $\dimc (X\grot^\Gamma,x)=\dim \overline{\{x\al\}}\an$ revient donc 
à demander que $\dimc (X\grot^\Gamma,x)=\dim \adhp xX$
et que 
$\adhp xX=\overline{\{x\al\}}\an$, soit encore
que $\dimc (X\grot^\Gamma,x)=\dim \adhp xX$ et que
$\adhp xX$ soit 
l'analytification d'un sous-schéma fermé
$\mathscr Z$ de $\mathscr X$ (nous dirons plus brièvement que 
$\adhp xX$ est \emph{algébrique}) : en effet, si $\adhp xX=
\mathscr Z\an$ alors $x\al$ est dense dans $\mathscr Z$
pour des raisons topologiques, si bien que $\mathscr Z=\overline{\{x\al\}}$. 
Remarquons qu'en vertu des théorèmes
GAGA \cite[appendice A]{poineau2010}, 
l'algébricité de $\adhp xX$ est
automatique dès que $\mathscr X$ est propre sur $A$, 
et en particulier si $\mathscr X=\spec A$ (auquel cas $X=\mathscr M(A)$, 
mais le résultat découle alors directement de la définition de la topologie
de Zariski, sans qu'il soit nécessaire 
d'invoquer GAGA). 

\end{enonce}

\begin{proof}[Démonstration du théorème \ref{theo-plat-naif}]
L'assertion (1) est locale sur la source
et le but, ce qui permet
de supposer $X$ et $Y$ affinoïdes ; notons $\mathscr X$ 
et $\mathscr Y$ les spectres de leurs algèbres respectives. 
Le faisceau cohérent $\mathscr F$ est donné
par un $\mathscr O(Y)$-module de type fini $M$. 
Dans le diagramme commutatif
\[
\begin{tikzcd}
\mathscr O_{Y,y}&\mathscr O_{X,x}\ar[l]\\
\mathscr O_{\mathscr Y, y\al}
\ar[u]&\mathscr O_{\mathscr X, x\al}
\ar[l]\ar[u]\end{tikzcd}\]
les flèches verticales sont fidèlement plates. Il 
s'ensuit que si $M\otimes_{\mathscr O(Y)}\mathscr O_{Y,y}$ est plat sur
$\mathscr O_{X,x}$, il l'est sur $\mathscr O_{\mathscr X, x\al}$, 
ce qui entraîne à son tour
la platitude de
$M\otimes_{\mathscr O(Y)}\mathscr O_{\mathscr Y,y\al}$
sur $\mathscr O_{\mathscr Xx\al}$. 
Il suffit donc pour démontrer (1)
de s'assurer que si 
$M\otimes_{\mathscr O(Y)}\mathscr O_{\mathscr Y,y\al}$ est 
plat sur $\mathscr O_{\mathscr X,x\al}$ alors $\mathscr F$ est 
$X$-plat en $y$. Mais c'est un cas particulier de l'assertion
(2), à savoir celui où $\mathscr G$
est le faisceau cohérent sur $\mathscr Y$ associé à $M$
(notons que comme $\mathscr X=\spec A$, l'hypothèse 
que $\dimc (X\grot^\Gamma,x)=\dim \adhp xX$ entraîne 
que $\dimc (X\grot^\Gamma,x)=\dim \overline{\{x\al\}}\an$
en vertu des commentaires faits en \ref{commentaires}, et qu'on est donc bien sous les hypothèses de l'assertion (2)). 

Il suffit donc désormais de démontrer (2).
Il résulte 
des commentaires faits en \ref{commentaires}
que $\dimc (X\grot^\Gamma,x)=\dim \adhp xX$
et que $\adhp xX$ est égal
à $\overline{\{x\al\}}\an$. 
Nous allons tout d'abord 
supposer que
$\dim \adhp xX=\dim_x X$. L'assertion (2)
est locale sur $\mathscr X$ (et $\mathscr Y$), 
ce qui permet de supposer que $\mathscr X$
est affine. 
Soit $Z$ le
lieu des points de $Y$ en lesquels $\mathscr F$
n'est pas $X$-plat. C'est un fermé de Zariski de $Y$
(d'après \cite[th. 10.7.4 (2b)]{ducros2018}
c'est même
l'analytifié d'un
fermé de Zariski de $\mathscr Y$, 
mais
nous n'en aurons pas besoin). 
Raisonnons par l'absurde et supposons donc que $y\in Z$. 
Puisque $\dim \adhp xX=\dim_x X$, 
l'anneau local $\mathscr O_{X,x}$ est artinien 
(c'est un cas particulier de la proposition \ref{prop-otheta}, mais qui est en fait une 
conséquence directe du corollaire 3.2.9 de \cite{ducros2018}),
et comme il est fidèlement plat sur $\mathscr O_{\mathscr X,x\al}$, 
ce dernier est également artinien (ce qui veut dire que
$\overline{\{x\al\}}$
est une composante irréductible de 
$\mathscr X$). 
Choisissons un faisceau cohérent d'idéaux
$\mathscr I$ sur
le schéma affine $\mathscr X$ tel que
$\mathscr I_{x\al}$ soit l'idéal
maximal de $\mathscr O_{\mathscr X,x\al}$, 
et une résolution
\begin{equation}
\tag{$\ast$}
\mathscr O_{\mathscr X}^m\to\mathscr O_{\mathscr X}^n\to \mathscr
O_{\mathscr X}\to 
\mathscr O_{\mathscr X}/\mathscr I\to 0
\end{equation}
de faisceaux cohérents sur $\mathscr X$.
Par hypothèse, $\mathscr G_{y\al}$ est plat sur $\mathscr O_{\mathscr X,x\al}$ ; 
comme $\mathscr I_{x\al}$
est nilpotent, cela équivaut
en vertu du critère local de platitude
\cite[\href{https://stacks.math.columbia.edu/tag/051C}{Tag 051C}]{stacks-project}
aux deux affirmations suivantes : 
\begin{itemize}[label=$\diamond$]
\item $\mathscr G_{y\al}/
\mathscr I_{x\al}\mathscr G_{y\al}$ est
plat sur $\mathscr O_{\mathscr X,x\al}/\mathscr I_{x\al}$,
ce qui est ici
automatique puisque ce dernier est un corps ; 
\item $\mathrm{Tor}_1^{\mathscr
O_{\mathscr X,x\al}}
(\mathscr G_{y\al}, \mathscr
O_{\mathscr X
,x\al}/\mathscr I_{x\al})=0$, 
ce qui signifie que la suite
\[(\mathscr G_{y\al})^m\to 
(\mathscr G_{y\al})^n\to \mathscr 
G_{y\al}\] déduite de $(\ast)$ est exacte.
\end{itemize}
Choisissons un polyrayon $r$
constitué d'éléments de $\Gamma$, constituant une famille libre du $\Q$-espace
vectoriel $\R_+^\times/\abs{k^\times}^\Q$, tel
que $\abs{k_r^\times}\neq \{1\}$ et tel 
que $A_r$ et $B_r$ soient strictement $k_r$-affinoïdes. 
Remarquons que comme $k_r$ est une $k$-algèbre
affinoïde $\Gamma$-stricte, tout espace strictement $k_r$-analytique
est également de manière naturelle un espace $k$-analytique 
$\Gamma$-strict. 
Soient $\eta$ et $\xi$ les images respectives
de $y$ et $x$ sur $Y_r$
et $X_r$ par la section de Shilov 
\cite[1.2.16]{ducros2018} ; puisque
$y\in Z$, le point $\eta$ est situé sur $Z_r$. 
Le morphisme $Y_r\to Y$ est plat, si bien que 
$\mathscr O_{Y_r,\eta}$ est un $\mathscr O_{Y,y}$-module
plat ; puisque
$\mathscr O_{Y,y}$
est lui-même
plat sur $\mathscr O_{\mathscr Y,y\al}$, 
la suite 
$(\mathscr G\an_\eta)^m\to (\mathscr
G\an_\eta)^n\to \mathscr G\an_\eta$
est
encore exacte. 
Il en résulte qu'il existe un voisinage
strictement $k_r$-affinoïde $V$
de $\eta$ dans $Y_r$ tel que la suite
$(\mathscr G\an_V)^m\to
(\mathscr G\an_V)^n\to \mathscr G\an_V$ soit
exacte. 
La fibre $(Z_r\cap V)_\xi$ est un fermé de Zariski non vide
de l'espace strictement $\hr \xi$-analytique
$V_\xi$ (il contient $\eta$). Il possède dès lors
un point $\hr x$-rigide $\zeta$. 
D'après le lemme 10.3.6 de \cite{ducros2018}, 
il existe un
espace strictement $k_r$-analytique $X'$, un 
morphisme quasi-étale $X'\to X_r$
et
un antécédent $\zeta'$ de $\zeta$ sur $Y':=Y_r\times_{X_r}
X'$
qui appartient à 
$\mathrm{Int}(Y'/X')$ ; 
quitte à restreindre $X'$, on peut
le supposer affinoïde.
Soit $x'$ l'image de $\zeta'$
sur $X'$. Le point $x'$ appartient à $\mathsf A(X'_x)$
puisque $X'\to X_r$ est quasi-étale et puisque $\xi$
est le point de Shilov de la fibre $(X_r)_x$, et il vérifie donc
l'égalité $\dimc((X'_x)\grot^\Gamma,x')=\dim_{x'}X'_x$. 
Comme $\dimc (X\grot^\Gamma,x)=\dim_x X$, 
on déduit du théorème \ref{tres-generique-robuste}
que $\dimc((X')\grot^\Gamma,x')=\dim_{x'}X'$, ce qui implique
que $\dimc(X',x')$
est égal à $\dim_{x'}X'$ puis que 
l'anneau local $\mathscr O_{X',x'}$ est 
artinien en vertu
de la proposition \ref{prop-otheta}, ou plus directement 
du corollaire 3.2.9 de \cite{ducros2018}.
Le sous-schéma fermé
de $\mathscr X$
défini par $\mathscr I$ contient $x\al$
et est réduit en ce dernier ; il en
résulte que le sous-espace analytique
fermé de $X$ défini par $\mathscr I\an$
contient $x$ et est réduit
en de dernier. 
Puisque 
$X'\to X$ est quasi-lisse (comme composé d'une flèche
quasi-étale 
et de $X_r\to X$),
on en déduit que le sous-espace
analytique fermé de $X'$ défini par $\mathscr I\an\mathscr
O_{X'}$ contient $x'$ et est réduit en ce dernier ; 
l'anneau
local artinien $\mathscr O_{X',x'}/\mathscr I_{x\al}
\mathscr O_{X',x'}$ 
est dès lors
un corps. En conséquence, 
$\mathscr G\an_{\zeta'}/
\mathscr I_{x\al}\mathscr G\an_{\zeta'}$
est plat sur $\mathscr 
O_{X',x'}/\mathscr I_{x\al}
\mathscr O_{X',x'}$. Et comme la suite
$(\mathscr G\an_V)^n\to
( \mathscr G\an_V)^m\to \mathscr G\an_V$
est exacte
et que $Y'\to Y$ est plat (car quasi-étale), 
la suite
\[(\mathscr G\an_{\zeta'})^n\to
(\mathscr G\an_{\zeta'})^m\to
\mathscr G\an_{\zeta'}\] 
est exacte. En vertu du critère local 
de platitude
rappelé plus haut, 
ceci entraîne que $\mathscr G\an_{\zeta'}$ est plat
sur $\mathscr O_{X',x'}$.
Le point $\zeta'$ 
appartenant à $\mathrm{Int}(Y'/X')$,
il s'ensuit que
$\mathscr G\an_{Y'}$ est $X'$-plat en 
$\zeta'$
\cite[th. 8.3.4]{ducros2018}. 
Combiné au fait que
$Y'\to X'$ se déduit de $Y\to X$ par
changement de base plat, ceci entraîne que 
$\mathscr G\an$ est $X$-plat en l'image $z$ de $\zeta'$
sur $Y$. Mais comme $\zeta$ a été choisi sur
$Z_r$, le point $z$ appartient à $Z$, ce qui
contredit la $X$-platitude
de $\mathscr G\an$  en $z$
et achève la démonstration
de (2)
lorsque $\dim \adhp xX=\dim_x X$. 

Montrons maintenant (2) dans le cas général. 
Soit 
$\mathscr J$ un faisceau cohérent d'idéaux sur
$\mathscr X$ de lieu des zéros $\overline{\{x\al\}}$ ; 
le lieu des zéros
de $\mathscr J\an$ est alors
égal à $\adhp xX$. 
Soit 
$X'$ un bon espace analytique défini sur une extension complète
de $k$ et muni d'un morphisme $X'\to X$, soit $y'$
un point de $Y':=Y\times_X X'$ situé au-dessus de $y$
et soit $x'$ son image sur $X'$. Nous allons montrer 
que $\mathscr G\an_{y'}$ est un $\mathscr O_{X',x'}$-module
plat, ce qui permettra de conclure. 
Soit $n$ un entier. En vertu du 
critère de platitude
\cite[\href{https://stacks.math.columbia.edu/tag/0523}
{Tag 0523}]{stacks-project},
il suffit
de prouver
que $\mathscr G\an_{y'}/\mathscr J_{x\al}^n\mathscr
G\an_{y'}$
est plat sur $\mathscr
O_{X',x'}/\mathscr J_{x\al}^n\mathscr O_{X',x'}$. 
Par hypothèse, $\mathscr G_{y\al}$ est plat
sur $\mathscr O_{\mathscr X,x\al}$, ce qui entraîne que 
$\mathscr G_{y\al}/
\mathscr J_{x\al}^n\mathscr G_{y\al}$
est plat sur $\mathscr
O_{\mathscr X,x\al}/\mathscr I_{x\al}^n
\mathscr O_{X\al,x\al}$. 
Soit $\mathscr Z$ le sous-schéma
fermé de $\mathscr X$
défini par le faisceau cohérent $\mathscr J^n$ ; posons
\[\mathscr T=\mathscr Y\times_
{\mathscr X}\mathscr Z,
Z=\mathscr Z\an\;\text{et}\;
T=\mathscr T\an,\]
puis 
$Z'=X'\times_X Z$ et $T'=T\times_Z Z'$. 
Par ce qui précède, $\mathscr G_{\mathscr T}$ est
$\mathscr Z$-plat
en $y\al$. Comme on a par hypothèse
$\dimc (Z\grot^\Gamma,x)
=\dim \adhp xZ=\dim_x Z$,
il résulte du cas particulier traité plus haut
que
$\mathscr G\an_T$ est $Z$-plat
en $y$. Il s'ensuit que $\mathscr G\an_{T'}$
est $Z'$-plat en $y'$, ce qui
implique que
$\mathscr G\an_{y'}/
\mathscr J_{x\al}^n\mathscr G\an_{y'}$
est plat sur $\mathscr
O_{X',x'}/\mathscr J_{x\al}^n\mathscr O_{X',x'}$.
\end{proof}

\begin{theo}\label{plat-naif-gtop}
Soit $Y\to X$ un morphisme entre espaces $k$-analytiques $\Gamma$-stricts,
soit $y$ un point de $Y$ et soit $x$ son image sur $X$. Soit $\eta$ un point
de $\widetilde{(Y,y)}^\Gamma$ et soit $\xi$ son image sur 
$\widetilde{(X,x)}^\Gamma$. Soit $\mathscr F$ un faisceau cohérent sur $Y$. Les assertions
suivantes sont équivalentes : 
\begin{enumerate}[i]
\item $\mathscr F$ est $X$-plat en $y$ ; 
\item $\mathscr F_{Y\grot^\Gamma,y}$ est plat sur $\mathscr O_{X\grot^\Gamma,x}$. 
\end{enumerate}
De plus si elles sont satisfaites alors 
$\mathscr F_{Y\grot^\Gamma,y,\eta}$ est plat sur $\mathscr O_{X\grot^\Gamma,x,\xi}$. 

\end{theo}

\begin{proof}
Supposons que 
$\mathscr F$ est $X$-plat
en $y$. 
Soit $U$ un domaine affinoïde
$\Gamma$-strict de $X$ contenant $x$.
Si $V$ est un domaine affinoïde
$\Gamma$-strict
de $Y\times_X U$ contenant $y$, alors $\mathscr F_{V,y}$
est plat sur $\mathscr O_{U,x}$ puisque $\mathscr F$ est $X$-plat en $y$. 
Par passage à la limite en faisant varier $V$, on en déduit que
$\mathscr F_{Y\grot^\Gamma,y}$ est plat sur $\mathscr O_{U,x}$. 
Par passage à la limite en faisant varier $U$, on en déduit que $\mathscr F_{Y\grot^\Gamma,y}$
est plat sur $\mathscr O_{X\grot^\Gamma,x}$.

Réciproquement, 
supposons  que
$\mathscr F_{Y\grot^\Gamma,y}$ est
plat sur $\mathscr O_{X\grot^\Gamma,x}$. 
Soit $U$ un domaine 
affinoïde $\Gamma$-strict de $X$
contenant
$x$ et tel que
$\dim \adhp xU$
soit égale à 
$\dimc (X\grot^\Gamma,x)$. 
Comme $\mathscr O_{X\grot^\Gamma,x}$
est plat sur $\mathscr O_{U,x}$
par la proposition \ref{prop-otheta} (3),
le $\mathscr O_{U,x}$-module $\mathscr F_{Y\grot^\Gamma,y}$
est plat. 
Or ce module est la colimite filtrante des $\mathscr F_{V,y}$
pour $V$ parcourant l'ensemble des domaines affinoïdes $\Gamma$-stricts
de $Y\times_X U$ contenant $y$ ; 
il en résulte
qu'il existe un tel
$V$ avec $\mathscr F_{V,y}$ plat sur
$\mathscr O_{U,x}$,
ce qui entraîne d'après le
théorème \ref{theo-plat-naif} que 
$\mathscr F_V$ est $U$-plat en $y$, 
c'est-à-dire encore que $\mathscr F$ est 
$X$-plat en $y$.

On a donc démontré que (i)
$\iff$
(ii). 
Supposons maintenant que ces
propriétés soient satisfaites. 
Dans le diagramme commutatif 
\[
\begin{tikzcd}
\mathscr O_{X\grot^\Gamma,x}\ar[r]&\mathscr O_{Y\grot^\Gamma,y}\\
\mathscr O_{X\grot^\Gamma,x,\xi}\ar[u]\ar[r]&\mathscr O_{Y\grot^\Gamma,y,\eta}
\ar[u]\end{tikzcd}
\]
les flèches verticales sont fidèlement plates 
d'après la proposition \ref{prop-otheta} (2) ; il en résulte aussitôt
au vu de (ii) que 
$\mathscr F_{Y\grot^\Gamma,y,\eta}$
est plat sur $\mathscr
O_{X\grot^\Gamma,x,\xi}$. 
\end{proof}

\begin{exem}
Reprenons les notations de l'exemple \ref{exem-rayon}, en supposant de plus
que la valeur absolue de $k$ n'est pas triviale et que $r\notin \abs{k^\times}^\Q$. 
On a vu (à la toute fin de l'étude de cet exemple) que sous
ces hypothèses on a $\dimc (X^{\{1\}}\grot,x)=2=\dim_x X$. 
Ceci entraîne que l'anneau local $\mathscr O_{X^{\{1\}}\grot,x}$ est artinien d'après la proposition 
\ref{prop-otheta} (1), et il est réduit puisque $X$ est réduit ; c'est donc un corps. On déduit alors du théorème
\ref{plat-naif-gtop} ci-dessus que pour tout espace 
\textit{strictement} $k$-analytique $Y$ muni
d'un morphisme $Y\to X$,  tout faisceau cohérent $\mathscr F$ sur $Y$, et tout antécédent $y$
de $x$ sur $Y$, le faisceau $\mathscr F$ est $X$-plat en $y$. 

Notons que l'hypothèse que $Y$ est strict est indispensable : l'immersion $Z\hookrightarrow X$ décrite au début de l'exemple
\ref{exem-rayon} n'est en effet pas plate en $x$, puisque son changement de base le long de $V\hookrightarrow X$ en fait une immersion 
fermée d'une courbe dans une surface.

\end{exem}


\bibliographystyle{smfalpha}
\bibliography{aducros}

\end{document}